\documentclass{article}
\usepackage{amsmath}
\usepackage{amssymb}
\usepackage[utf8]{inputenc}
\usepackage[T1]{fontenc}
\usepackage{amsthm}
\usepackage{xcolor}
\usepackage{soul}
\usepackage{hyperref}
\usepackage{geometry}
\usepackage{lmodern}
\usepackage{float}

\geometry{hmargin=3.5cm}
\geometry{vmargin=4cm}
\geometry{headsep=1cm}

\newtheorem{theorem}{Theorem}[section] % Numérotation par section
\newtheorem{lemma}[theorem]{Lemma}
\newtheorem{proposition}[theorem]{Proposition}
\newtheorem{corollary}[theorem]{Corollary}
\newtheorem{assumption}{Assumption}

\theoremstyle{remark}
\newtheorem{remark}{Remark}[section]

\theoremstyle{definition}
\newtheorem{example}{Example}[section]
 % Numérotation par section

\newcommand{\ori}{\textup{o}}
\newcommand{\deriv}{\mathrm{d}}
\newcommand{\esp}{\mathbf E}
\newcommand{\espn}{\mathbf E^n}

\usepackage[textwidth=2.5cm]{todonotes}
\presetkeys{todonotes}{size=\scriptsize}{}

\newcommand{\Emmett}[5]{% points, advance, rand factor, options, end label
\draw[#4] (0,0)
\foreach \x in {1,...,#1}
{   -- ++(#2,rand*#3)
}
node[right] {#5};
}

\begin{document}
\title{Averaging principle for jump processes depending on fast ergodic dynamics}

\author{Vincent Kagan$^{1}$, Edouard Strickler$^{1}$, Denis Villemonais$^{2,3}$}

\footnotetext[1]{Universit\'e de Lorraine, CNRS, Inria, IECL, F-54000 Nancy, France \\
  E-mail: vincent.kagan@univ-lorraine.fr, edouard.strickler@univ-lorraine.fr}
  
  \footnotetext[2]{Université de Strasbourg,  IRMA, Strasbourg, France\\ E-mail : denis.villemonais@unistra.fr}
  
  \footnotetext[3]{Institut Universitaire de France}
\maketitle

\begin{abstract}
    We consider a  slow-fast stochastic process  where the slow component is a jump process on a measurable index set  whose transition rates depend on the position of the fast component. Between the jumps, the fast component evolves according to an ergodic dynamic in a state space determined by the index process.
 We prove that, when the ergodic dynamics are accelerated, the slow index process converges to an autonomous pure jump process on the index set. 
 
 We apply our results to prove the convergence of a typed branching process toward a continuous-time Galton–Watson process, and of an epidemic model with fast viral loads dynamics to a standard contact process.
\end{abstract}

\section{Introduction}

The averaging principle is a cornerstone of the analysis of slow–fast stochastic systems, providing a  framework for reducing high-dimensional models by exploiting time-scale separation.
In his foundational works, notably \cite{Hasminskii1966a} and \cite{Hasminskii1966b},  Has’minskii demonstrated that, under the regime where fast components of a stochastic system are increasingly accelerated, the system’s behaviour may be effectively described by averaged or reduced dynamics—often deterministic or exhibiting pure jump characteristics. His approach formalises the transition from detailed multi-component dynamics to effective equations governing slow variables.

Building on this perspective, Kurtz \cite{KurtzAveraging}
showed
 that averaging or reduced dynamics of such multiscale stochastic systems can be formulated via martingale problems. Related perspectives in Freidlin–Wentzell theory \cite{book} show that metastable fast dynamics naturally give rise to limiting pure jump processes. 
Classical results on averaging principles also relate to diffusion approximations, see e.g.~\cite{PardouxYu}.
In \cite{FreidlinKoralov,Goddard1}, the authors investigate the subtleties that arise when the fast process possesses multiple invariant measures.
We also refer the reader to  \cite{HairerLi} for averaging dynamics driven by fractional Brownian motion, to
\cite{Yin} for singular perturbations in switching diffusions, and to \cite{Genadot} for piecewise deterministic Markov processes with attractive boundaries.

In this paper, we consider a two-timescales stochastic process $(\mathbf X^n_t,\mathbf I^n_t)_{t\geq 0}$, where $\mathbf I^n$ is a (slow) jump process on an index set $I$ whose transition rates depend on the position of the (fast) process $\mathbf X^n$. Between the jumps of $\mathbf I^n$, the process $\mathbf X^n$ evolves according to an ergodic dynamic in a state space which depends on $\mathbf I^n$.
 We prove that, when the ergodic dynamics are accelerated, the process $\mathbf{I}^n$  converges to an autonomous pure jump process $\mathbf{I}$ on the index set $I$. 

The limiting dynamics are often straightforward to describe, and they connect rigorously to classical models long used in applications. To illustrate this principle, we provide two applications: (i) the convergence of a typed branching process toward a continuous-time Galton–Watson process, and (ii) an epidemic model on a graph with viral loads that, under accelerated viral dynamics, converges to a standard contact process.

Our results situate within a broad literature on multiscale stochastic modeling, encompassing chemical reaction networks, interacting particle systems, and biological dynamics. For instance, recent works, such as \cite{CentralLimit,AsymptoticAnalysis,AnalysisOfStochastic}, employed averaging and Poisson equation techniques to analyze chemical networks, while related approaches have been applied to biological models \cite{ballif:hal-03405177} and structural classes of reaction systems \cite{Product-FormStationary}. Other contributions have explored mean-field limits in rapidly varying environments \cite{AParticleSystem}. By extending these perspectives to (non-necessarilly Markov) stochastic processes with ergodic fast components, we provide a unified methodology for deriving pure jump limits, and thereby a rigorous foundation for using classical processes to approximate more complex multiscale dynamics.

Our proofs rely on fundamental properties of ergodic stochastic dynamics, escaping the technicalities of martingale problems and infinitesimal generators as developed in~\cite{KurtzAveraging}. In particular, we identify the limit as an explicit pure jump process, whereas, in the latter article, the author proves a tightness result and  characterises the limit points as solutions of a martingale problem.  In addition, our methods allow us to extend the classical Markov setting in Polish spaces to non-Markov stochastic processes evolving in general state spaces. 
Note that however our results only apply to two-time scales processes while the results in~\cite{KurtzAveraging} apply to a broader class of averaging problems (e.g. space rescaling).

The paper is organised as follows. In Section~\ref{sec:base}, we give the precise definition of the stochastic process evolving in the different communication classes, while its rigorous construction is postponed to Appendix~\ref{sec:appendixPO}. The main convergence results for the accelerated process are given in Section~\ref{sec:main-results} and the proofs in Section~\ref{sec:proofs}. Section~\ref{sec:appli} presents some applications of our results.

\section{Base process and first properties}
\label{sec:base}
Let $(E_i,\mathcal{E}_i)_{i\in I}$ a collection of measurable spaces indexed by a measurable set $(I,\mathcal I)$.  We define the measurable space $(E,\mathcal{E})$ with $E:=\sqcup_{i\in I}E_i$ (here $\sqcup$ denotes the disjoint union) endowed with the $\sigma$-algebra $\mathcal{E}$  defined by
\[
\mathcal{E}:=\{S\subset E:S\cap E_i\in\mathcal{E}_i\;\textup{for each}\;i\in I\}
\]
In particular, with this choice of $\sigma$-algebra, the application 
\[
\phi:x\in E\to i\in I\text{ such that }x\in E_i
\]
is measurable (see Lemma~\ref{lem:mesE} in the Appendix).

 In the remainder of this section, we describe an $E\times I$-valued stochastic process $(\mathbf X^1, \mathbf I^1)$, where $\mathbf I^1$ is a jump process on $I$ whose transitions depend on the position of $\mathbf X^1$ and, when $\mathbf I^1$ is in $i\in I$, $\mathbf X^1$ evolves on $E_i$ according to some progressively measurable dynamic up to the next jump time of $\mathbf I^1$. Then $(\mathbf X^1, \mathbf I^1)$ transitions from $E_i \times \{i\}$ to a new position in $E \times I$.
 
 In order to describe our base stochastic process $(\mathbf{X}^1, \mathbf I^1)$, we need three ingredients: the dynamic of $\mathbf X^1$ on each $E_i$ between the jumps of $\mathbf I^1$, the law of the jump times, and the law of the  new position in $E \times I$ after the jump. Note that it is actually sufficient to describe the dynamic of $\mathbf X^1$ on $E$, since $\mathbf I^1=\phi(\mathbf X^1)$.

\paragraph{Dynamic of $\mathbf X^1$ between the jumps of $\mathbf I^1$} Let $(V,\mathcal F^\ori)$ be a measurable space endowed with a filtration $(\mathcal F^\ori_t)_{t\geq 0}$ and a family of probability measures $(\mathbf{P}_x^\ori)_{x\in E}$. We assume that we are given a progressively measurable  process 
\[
X^\ori=(V, \mathcal F^\ori,(\mathcal F^\ori_t)_{t\geq 0},(\mathbf{P}_x^\ori)_{x\in E},(X_t^\ori)_{t\geq0})
\]
with values in the measurable space $(E,\mathcal E)$ and such that, for all $i\in I$ and $x_i\in E_i$,
\begin{equation}
\label{eq:stable-E-i}
\mathbf{P}_{x_i}^\textup{o}(\forall t\geq0,X_t^\ori\in E_i)=1.
\end{equation}

\begin{remark}
Note that condition~\eqref{eq:stable-E-i} can also be interpreted as the fact that, on each $E_i$, we are given a stochastic process $X^{\ori,i}$. This is reminiscent of Markov processes with switching (see e.g. \cite{BH12, BMZIHP, CDGMMY17, YZ09, CH15, CKW21}  when $I$ is finite or countable). Our construction relies on a construction by piecing-out inspired by \cite{MBP2} in the context of branching processes.
\end{remark}

\paragraph{Law of the jump time of $\mathbf{I}^1$}
We consider an absolutely continuous cumulative distribution function $G:\mathbb{R}_+\longrightarrow [0,1]$, and a measurable rate function $b: E\rightarrow [0,\infty)$ such that $\int_{0+} b(X^\ori_t)\,\mathrm dt<\infty$ $\mathbf P^\ori_x$-almost surely, for all $x\in E$. For all $x \in E$, we let $\nu_x$ denote the distribution of the random variable 
\[
\zeta = \inf\{ t \geq 0 \: : G\Big(\int_0^t b (X_s^\ori) \, \deriv s\Big)\geq  U \}
\]
under $\mathbf P_x^\ori$, and where $U$ is an independent random variable with uniform law on $[0,1]$. The idea is that $\nu_x$ is the law of the time elapsed between the $k$-th and the $(k+1)$-th jump times whenever the process is at $x$ after the $k$-th jump time. Note also that in the particular case where $G(x) = 1 - e^{-x}$, we recover the classical setting of a process with exponential jump rate~$b$.

\paragraph{Transition kernel}  We also assume that we are given a transition kernel $\pi$ from $E$ to $E$: if the process is at some position $x \in E$ just before the transition time, then its new position is chosen according to $\pi(x, \cdot)$.

\medskip

In the following proposition, $\Delta\notin E$ plays the role of a cemetery point.
\begin{proposition}
There exists a random process $\mathbf{X}^1 = (\Omega,\mathcal{F}^1,(\mathbf{X}_t^1)_{t\geq0}, (\mathbf{P}_x^1)_{x\in E})$ with state space $E\cup \{\Delta\}$ with lifetime $T_\Delta^1:=\inf\{t\geq 0,\mathbf X_t^1=\Delta\}$, and a random sequence of times $(\tau_k^1)_{k \geq 0}$ such that
\begin{enumerate}
    \item $\tau_0^1 = 0$;
    \item for all $k \geq 0$, conditionally on $\mathbf{X}_{\tau_k^1}^1$
    the distribution function of $\tau_{k+1}^1 - \tau_k^1$ is given by $\nu_{\mathbf{X}_{\tau_k^1}^1}$;
    \item for all $k \geq 0$, conditionally on $\mathbf{X}_{\tau_k^1}^1$,  $(\mathbf{X}_{t-\tau_k^1}^1)_{t\in [\tau_k^1, \tau_{k+1}^1)}$ is distributed as $(X^\ori_t)_{t\in[0,\zeta)}$ under $\mathbf P^\ori_{\mathbf{X}_{\tau_k^1}^1}$;
    \item at time $t = \tau_{k+1}^1$, $\mathbf{X}_t^1$ transitions to a random position selected according to the kernel $\pi(\mathbf{X}_{\tau_{k+1}^1-}^1,\,\cdot\,)$;
    \item $T_\Delta^1=\lim_{n\to+\infty} \tau_k^1$.
\end{enumerate}
In addition, this process satisfies the strong Markov property at its transition times $(\tau_k^1)_{k \geq 0}$.
\end{proposition}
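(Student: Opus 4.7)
The plan is to construct $(\mathbf{X}^1_t)_{t\geq 0}$ by the classical piecing-out procedure adapted to the non-Markovian, general state space setting described before the statement. I would first set up a canonical probability space carrying all the required randomness: take $\Omega$ to be a countable product $\prod_{k \geq 0}(V^{(k)} \times [0,1] \times [0,1])$, where each $V^{(k)}$ is an independent copy of the base space carrying an independent realisation of $X^\ori$ under its family $(\mathbf{P}_x^{\ori,(k)})_{x \in E}$, the first $[0,1]$ factor carries an independent uniform $U_k$ to sample the $k$-th holding time, and the second carries an independent uniform $W_k$ used to sample the post-jump position through a measurable parametrisation of the kernel $\pi$. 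Equip $\Omega$ with the product $\sigma$-algebra $\mathcal{F}^1$, and let $\mathbf{P}^1_x$ denote the corresponding product measure when the $k$-th trajectory is drawn under $\mathbf{P}^{\ori,(k)}_{\mathbf{X}^1_{\tau_k^1}}$.

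Next, I would define $(\tau_k^1, \mathbf{X}^1_{\tau_k^1})_{k \geq 0}$ by recursion. Set $\tau_0^1 := 0$ and $\mathbf{X}^1_0 := x$ under $\mathbf{P}^1_x$. Given $\mathbf{X}^1_{\tau_k^1}$, run the $k$-th copy of $X^\ori$ starting from $\mathbf{X}^1_{\tau_k^1}$, define
\[
\zeta_k := \inf\Big\{ t \geq 0 : G\Big(\int_0^t b(X^{\ori,(k)}_s)\,\deriv s\Big) \geq U_k \Big\},
\]
set $\tau_{k+1}^1 := \tau_k^1 + \zeta_k$, and let $\mathbf{X}^1_t := X^{\ori,(k)}_{t - \tau_k^1}$ for $t \in [\tau_k^1, \tau_{k+1}^1)$. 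Then draw $\mathbf{X}^1_{\tau_{k+1}^1}$ from $\pi(\mathbf{X}^1_{\tau_{k+1}^1-}, \cdot)$ using $W_k$. Finally, put $T_\Delta^1 := \lim_{k \to \infty} \tau_k^1$ and $\mathbf{X}^1_t := \Delta$ for $t \geq T_\Delta^1$. Property (1) and (5) are then immediate, (2) follows from the defining law of $\nu_x$ applied at $x = \mathbf{X}^1_{\tau_k^1}$, and (3)–(4) are built into the construction together with the stability property~\eqref{eq:stable-E-i}, which guarantees that the index $\phi(\mathbf{X}^1_t)$ stays constant on each interval $[\tau_k^1, \tau_{k+1}^1)$.

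The strong Markov property at $\tau_k^1$ would then follow from the independence of the factors $(V^{(j)}, U_j, W_j)_{j \geq k}$ from $\mathcal{F}^1_{\tau_k^1}$: conditionally on $\mathbf{X}^1_{\tau_k^1}$, the shifted process $(\mathbf{X}^1_{\tau_k^1 + t})_{t \geq 0}$ is built from a fresh copy of the same ingredients and therefore has the law of the process started afresh from $\mathbf{X}^1_{\tau_k^1}$. The main technical obstacle I expect, and the reason a detailed treatment is postponed to Appendix~\ref{sec:appendixPO}, lies in the measurability issues specific to the general state space $(E, \mathcal{E})$: namely, showing that $x \mapsto \nu_x$ is a measurable kernel (which uses the progressive measurability of $X^\ori$ together with the measurability of $b$ and absolute continuity of $G$), producing a measurable realisation of the kernel $\pi$ from a uniform random variable, and handling bookkeeping across the disjoint union $E = \sqcup_{i\in I} E_i$. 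Once these measurability facts are established, the recursive construction above goes through verbatim and yields the claimed process.
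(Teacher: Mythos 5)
Your overall architecture---piecing out independent excursions of $X^\ori$, killing each one at the time $\zeta_k$ built from an independent uniform, restarting from a draw of $\pi$, and sending the process to $\Delta$ after $\lim_k\tau_k^1$---is exactly the paper's strategy (Appendix~\ref{sec:appendixPO}). However, two steps of your implementation do not survive at the level of generality the paper insists on, namely arbitrary measurable spaces $(E,\mathcal E)$ rather than Polish ones. First, you sample the post-jump position as a measurable function of a uniform variable $W_k$ ``through a measurable parametrisation of the kernel $\pi$''. Such a parametrisation (a measurable $F:E\times[0,1]\to E$ with $F(x,W)\sim\pi(x,\cdot)$) exists when the target is a standard Borel space, but not for a general measurable space; this is not a bookkeeping issue that ``goes through verbatim'' once checked---it can genuinely fail. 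The paper sidesteps it by taking the third coordinate of each block to be $E$ itself and defining the one-step law directly with the kernel, $\tilde{\mathbf P}_x(A\times B\times C)=\int_{A\times B}\mathbf P^\ori_x(\deriv v)\,\lambda(\deriv u)\,\pi(X^\ori_{\zeta(v,u)}(v),C)$, so that no pointwise realisation of $\pi$ is ever needed.

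Second, the measure you call ``the corresponding product measure'' is not a product measure: the law of the $k$-th block depends on $\mathbf X^1_{\tau_k^1}$, hence on the earlier coordinates, both through the starting point of $X^{\ori,(k)}$ and through $\pi$. Its existence on the infinite product is exactly what the Ionescu--Tulcea theorem provides (this is how the paper obtains $(\mathbf P_x)_{x\in E}$ via~\eqref{eq:mesureProd}), and without invoking it---or without a genuine noise representation of the whole family $(\mathbf P^\ori_x)_{x\in E}$ by i.i.d.\ factors, which again needs standard Borel hypotheses---your space $(\Omega,\mathcal F^1,\mathbf P^1_x)$ is not actually constructed. For the same reason, the strong Markov property cannot be deduced from ``independence of the factors $(V^{(j)},U_j,W_j)_{j\geq k}$ from $\mathcal F^1_{\tau_k^1}$'', since those factors are not independent of the past; the paper instead proves it by a $\pi$--$\lambda$ argument directly on the kernel-composition structure of $\mathbf P_x$ (Proposition~\ref{prop:propMarkov}). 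In short: right blueprint, but the randomisation-by-uniforms shortcut and the product-measure claim are gaps that the paper's Ionescu--Tulcea construction is specifically designed to avoid.
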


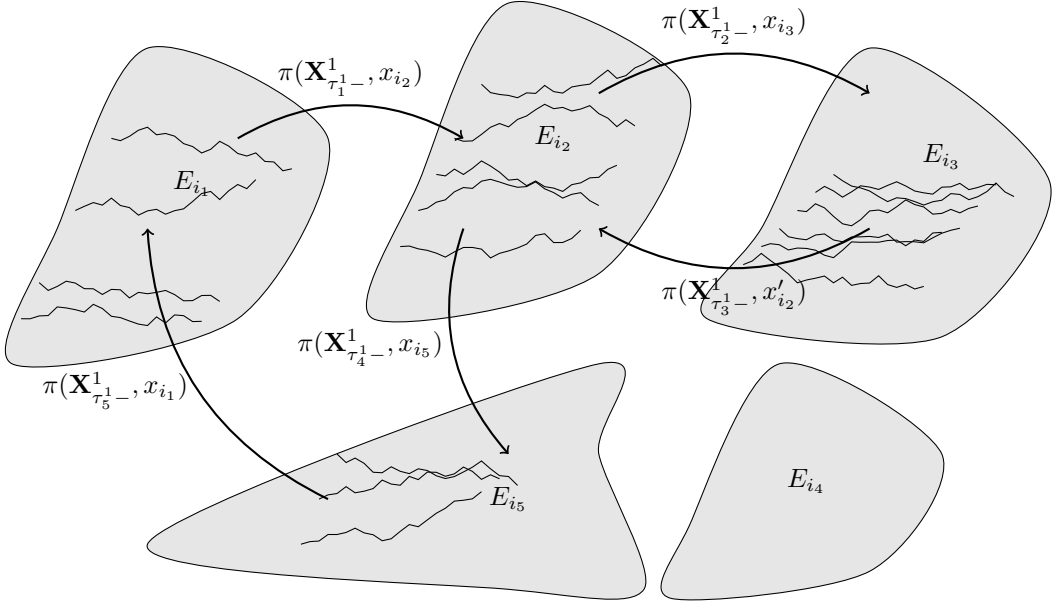
\begin{figure}[h]
    \centering
\pgfmathsetseed{2025}
\begin{tikzpicture}[scale=1.2]
    \newcommand{\totalwidth}{16cm}
    \newcommand{\halfwidth}{\totalwidth / 2}
    
    \begin{scope}[shift={(-\halfwidth + \textwidth / 2, 0)}]

        \filldraw[fill=gray!20, draw=black] plot [smooth cycle] coordinates {(-1,0) (0,2) (2,1) (1,-1) (-1.5,-1.5)};

        \node at (0.5,0.5) {$E_{i_1}$};
        \foreach \i in {-1,1,5,9} {
            \begin{scope}[shift={(\i*0.1-1.3,\i*0.2-0.8)}]  
                \Emmett{20}{0.1}{0.1}{black}{}
            \end{scope}
        }
        \begin{scope}[xshift=4cm]
        \filldraw[fill=gray!20, draw=black] plot [smooth cycle] coordinates {(-1,0.5) (0,2.5) (2,1.5) (1,-0.5) (-1.5,-1)};
        \node at (0.5,1) {$E_{i_2}$};
        \foreach \i in {1,3,5,7,10} {
            \begin{scope}[shift={(\i*0.1-1.3,\i*0.2-0.4)}]  
                \Emmett{20}{0.1}{0.1}{black}{}
            \end{scope}
        }
        \end{scope}

        \begin{scope}[xshift=8cm]
        \filldraw[fill=gray!20, draw=black] plot [smooth cycle] coordinates {(-1.2,0) (0,2) (2,0.5) (0.8,-1.2) (-1.8,-1)};
        \node at (0.8,0.8) {$E_{i_3}$};
        \foreach \i in {1,...,6} {
            \begin{scope}[shift={(\i*0.2-1.6,\i*0.2-0.6)}]  
                \Emmett{20}{0.1}{0.1}{black}{}
            \end{scope}
        }
        \end{scope}

        \begin{scope}[xshift=7cm, yshift=-3.7cm]
        \filldraw[fill=gray!20, draw=black] plot [smooth cycle] coordinates {(-1,0.4) (0,2.2) (1.8,1.2) (1,-0.1) (-1.2,-0.4)};
        \node at (0.3,0.9) {$E_{i_4}$};
        \end{scope}

        \begin{scope}[xshift=5cm, yshift=-4cm]
        \filldraw[fill=gray!20, draw=black] plot [smooth cycle] coordinates {(-5,0.5) (0,2.5) (0,1.5) (0.5,0) (-1,0)};
        \node at (-1,1) {$E_{i_5}$};
        \foreach \i in {1,...,3} {
            \begin{scope}[shift={(\i*0.2-3.5,\i*0.5-0.)}]  
                \Emmett{20}{0.1}{0.1}{black}{}
            \end{scope}
        }
        \end{scope}
        
        % Flèches en noir sans couleur
        \draw[->, thick, bend left] (1,1) to node[midway, above] {$\pi(\mathbf{X}_{\tau_1^1-}^1,x_{i_2})$} (3.5,1);
        \draw[->, thick, bend left] (5,1.5) to node[midway, above] {$\pi(\mathbf{X}_{\tau_2^1-}^1,x_{i_3})$} (8,1.5);
        \draw[<-, thick, bend right] (5,0) to node[midway, below] {$\pi(\mathbf{X}_{\tau_3^1-}^1,x_{i_2}')$} (8,0);
        \draw[->, thick, bend right] (3.5,0) to node[midway, left] {$\pi(\mathbf{X}_{\tau_4^1-}^1,x_{i_5})$} (4,-2.5);
        \draw[<-, thick, bend right] (0,0) to node[midway, left] {$\pi(\mathbf{X}_{\tau_5^1-}^1,x_{i_1})$} (2,-3);
        
    \end{scope}
\end{tikzpicture}
    \caption{Schematic representation of the dynamic of $\mathbf X^1$, built by piecing out and evolving according to specified stochastic dynamics on each $E_i$, $i\in I$.}
    \label{fig:trajectory}
\end{figure}

The process $\mathbf X^1$ is built formally in Section~\ref{sec:appendixPO}, following the construction by piecing out of \cite{MBP2}, see Figure~\ref{fig:trajectory} for a schematic representation of the trajectories of $\mathbf X^1$. As for the index process, we set $\mathbf I^1 = \phi( \mathbf X^1)$, where it is understood that $\phi(\Delta) = \partial$ for some $\partial \notin I$. Technical elementary properties of this process are also provided in Section~\ref{sec:appendixPO}. We conclude this section with three propositions that will be key elements of the proof of the main results.

\begin{proposition}
\label{prop:moyenne1}
Let  $f:E\cup\{\Delta\}\longrightarrow\mathbb{R}$ be a bounded measurable function. Then, for all $x\in E$ and $T>0$,
\[
\mathbf E_x^1\big[f(\mathbf{X}_{\tau_1^1}^1)\,\mathbf{1}_{\{\tau_1^1\leq T\}}\big]=\mathbf{E}_x^\ori\Big[\int_0^T\deriv t\,G'\Big(\int_0^tb(X_{s}^\ori)\,\deriv s\Big)\,b(X_{t}^\ori)\int_E\pi(X_{t}^\ori,\deriv y)\,f(y)\Big].
\]
\end{proposition}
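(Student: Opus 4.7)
The plan is to unfold the construction of $\mathbf X^1$ in terms of its auxiliary ingredients $(X^\ori,U,\pi)$, where $U$ is the uniform random variable driving the first jump time, and then compute. Setting $g(y):=\int_E\pi(y,\deriv z) f(z)$ (bounded and measurable), property~(4) of the construction tells us that conditionally on $\mathbf X_{\tau_1^1-}^1$ the position $\mathbf X_{\tau_1^1}^1$ has law $\pi(\mathbf X_{\tau_1^1-}^1,\cdot)$; taking conditional expectations yields
\[
\mathbf E_x^1\big[f(\mathbf X_{\tau_1^1}^1)\,\mathbf 1_{\{\tau_1^1\le T\}}\big]=\mathbf E_x^1\big[\mathbf 1_{\{\tau_1^1\le T\}}\, g(\mathbf X_{\tau_1^1-}^1)\big].
\]
Then properties~(2) and~(3) let me realise the pair $\big(\tau_1^1,\mathbf X_{\tau_1^1-}^1\big)$ on an enlarged space carrying $X^\ori$ under $\mathbf P_x^\ori$ together with an independent uniform variable $U\sim\mathrm{Unif}[0,1]$ as $\big(\zeta,X_\zeta^\ori\big)$, where $\zeta:=\inf\{t\ge 0:G(A_t)\ge U\}$ and $A_t:=\int_0^t b(X_s^\ori)\,\deriv s$; this reduces the computation to evaluating $\mathbf E[\mathbf 1_{\{\zeta\le T\}}\,g(X_\zeta^\ori)]$ on the enlarged space.

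Setting $H(t):=G(A_t)$, the absolute continuity and monotonicity of $G$ and $A$ make $H$ absolutely continuous and non-decreasing, with a.e.~derivative $H'(t)=G'(A_t)\,b(X_t^\ori)$. Monotonicity of $H$ gives $\{\zeta\le t\}=\{H(t)\ge U\}$, whence $\mathbf P(\zeta\le t\mid X^\ori)=H(t)=G(A_t)$; that is, conditionally on $X^\ori$, the law of $\zeta$ is absolutely continuous on $\mathbb{R}_+$ with (possibly defective) density $t\mapsto G'(A_t)\,b(X_t^\ori)$. Integrating $U$ out by Fubini yields
\[
\mathbf E\big[\mathbf 1_{\{\zeta\le T\}}\,g(X_\zeta^\ori)\,\big|\,X^\ori\big]=\int_0^T g(X_t^\ori)\,G'(A_t)\,b(X_t^\ori)\,\deriv t,
\]
and taking the expectation under $\mathbf P_x^\ori$ and substituting back the definition of $g$ produces the announced formula.

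The main delicate point is the coupling identification $\mathbf X_{\tau_1^1-}^1 = X_\zeta^\ori$: this is inherent to the piecing-out construction developed in the appendix and is well-defined because $\zeta$ is an $(\mathcal F^\ori_t)$-stopping time and $X^\ori$ is progressively measurable, even if $X^\ori$ has no left limits at~$\zeta$. Apart from that, the argument is a direct application of the coupling and Fubini's theorem.
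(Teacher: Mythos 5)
Your proposal is correct and follows essentially the same route as the paper: reduce the left-hand side to $\mathbf E_x^\ori\big[\pi(X^\ori_{\zeta},f)\,\mathbf 1_{\{\zeta\leq T\}}\big]$ via the piecing-out construction (the paper invokes Proposition~\ref{prop:piecingOut} for exactly this identification, which you re-derive from properties (2)--(4)), and then integrate out the independent uniform variable to identify the conditional density of $\zeta$ given $X^\ori$ as $t\mapsto G'\big(\int_0^t b(X_s^\ori)\,\deriv s\big)\,b(X_t^\ori)$. The only difference is one of presentation: you unfold the construction by hand where the paper cites the appendix lemma directly.
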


\begin{proof}
The result is an immediate application of Proposition~\ref{prop:piecingOut}. Indeed, according to this result,
\begin{align*}
\mathbf E_x\big[f(\mathbf{X}_{\tau_1^1}^1)\,\mathbf{1}_{\{\tau_1\leq T\}}\big]
&=\mathbf E^\ori_x\big[\pi(X^\ori_{\zeta},f)\,\mathbf 1_{\zeta\leq T}\big], 
\end{align*}
where, under the assumption that $G$ is absolutely continuous and conditionally to $(X^\ori_t)_{t\geq 0}$, the law of $\zeta$  has density $G'\big(\int_0^tb(X_{s}^{\ori}(v))\,\deriv s\big)\,b(X_{t}^{\ori}(v))$. This concludes the proof of Proposition~\ref{prop:moyenne1}.
\end{proof}

\begin{proposition}
\label{prop:moyenneN}
Let $N\geq1$, $T_1,\ldots,T_N>0$ 
and  $f:E^N\to\mathbb R$ be a bounded $\mathcal{E}^{\otimes N}$-measurable function. Then, for all $x\in E$ we have
\begin{align*}   
\mathbf E_x^1\big[f(\mathbf{X}_{\tau_1^1}^1,\ldots&,\mathbf{X}_{\tau_N^1}^1)\,\mathbf{1}_{\{\tau_1^1\leq T_1\}}\ldots \,\mathbf{1}_{\{\tau_N^1-\tau_{N-1}^1\leq T_N\}}\big]\\
&=\mathbf{E}_x^\ori\Big[\int_0^{T_1}\deriv t_1\,G'\Big(\int_0^{t_1}\,b(X_{s}^\ori)\,\deriv s\Big)\,b(X_{t_1}^\ori)\int_E\pi(X_{t_1}^\ori,\deriv y_1)\mathbf{E}_{y_1}^\ori\Big[\ldots\hspace{2cm}\\
&\times\mathbf{E}_{y_{N-1}}^\ori\Big[\int_0^{T_N}\deriv t_N\,G'\Big(\int_0^{t_N}b(X_{s}^\ori)\,\deriv s\Big)\,b(X_{t_N}^\ori)\,\pi(X_{t_N}^\ori,\deriv y_N)\,f(y_1,\ldots,y_N)\Big]\ldots\Big]\Big].
\end{align*}
\end{proposition}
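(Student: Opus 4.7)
The plan is to argue by induction on $N$. The case $N = 1$ is exactly Proposition~\ref{prop:moyenne1}, which supplies the base case. For the induction step, assume that the formula holds at order $N - 1$ for any bounded measurable function on $E^{N-1}$ and any positive times, and let $f : E^N \to \mathbb R$ be bounded and $\mathcal E^{\otimes N}$-measurable.

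For every $y_1 \in E$, introduce the auxiliary quantity
\[
\Psi(y_1) := \mathbf E_{y_1}^1\bigl[ f(y_1, \mathbf X_{\tau_1^1}^1, \ldots, \mathbf X_{\tau_{N-1}^1}^1)\, \mathbf 1_{\{\tau_1^1 \leq T_2\}} \cdots \mathbf 1_{\{\tau_{N-1}^1 - \tau_{N-2}^1 \leq T_N\}} \bigr],
\]
extended by $\Psi(\Delta) = 0$. Applying the strong Markov property of $\mathbf X^1$ at the transition time $\tau_1^1$ stated in the preceding proposition, the post-$\tau_1^1$ shifted dynamics is a copy of $\mathbf X^1$ starting from $\mathbf X_{\tau_1^1}^1$, so that
\[
\mathbf E_x^1\bigl[ f(\mathbf X_{\tau_1^1}^1, \ldots, \mathbf X_{\tau_N^1}^1)\, \mathbf 1_{\{\tau_1^1 \leq T_1\}} \cdots \mathbf 1_{\{\tau_N^1 - \tau_{N-1}^1 \leq T_N\}}\bigr] = \mathbf E_x^1\bigl[ \mathbf 1_{\{\tau_1^1 \leq T_1\}}\, \Psi(\mathbf X_{\tau_1^1}^1)\bigr].
\]
I would then apply Proposition~\ref{prop:moyenne1} to the bounded measurable function $\Psi$, producing the outermost layer $\mathbf E_x^\ori\bigl[\int_0^{T_1}\deriv t_1\, G'(\int_0^{t_1} b(X_s^\ori)\,\deriv s)\, b(X_{t_1}^\ori) \int_E \pi(X_{t_1}^\ori, \deriv y_1)\, \Psi(y_1)\bigr]$ of the target formula. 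Finally, the induction hypothesis applied to $\Psi(y_1)$, viewed as the order-$(N-1)$ quantity associated with the bounded measurable function $g_{y_1}(z_1, \ldots, z_{N-1}) := f(y_1, z_1, \ldots, z_{N-1})$ and times $T_2, \ldots, T_N$, unfolds the remaining $N - 1$ nested integrals and yields exactly the right-hand side of the claim.

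Two technical points deserve attention. The measurability of $y_1 \mapsto \Psi(y_1)$ is needed both to integrate it against $\pi(X_{t_1}^\ori, \deriv y_1)$ and to apply Proposition~\ref{prop:moyenne1}; this follows from the piecing-out construction of Appendix~\ref{sec:appendixPO}, which makes the law of $\mathbf X^1$ a measurable function of its initial condition. Fubini's theorem is used implicitly when substituting the induction hypothesis inside the outer integral, and is justified by the boundedness of $f$ together with the integrability of $t \mapsto G'(\int_0^t b(X_s^\ori)\,\deriv s)\, b(X_t^\ori)$ on $[0, T_1]$ (which is bounded above by the density of a probability measure). I expect the measurability verification for $\Psi$ to be the only substantive step; the rest is the mechanical propagation of the identity of Proposition~\ref{prop:moyenne1} at each level of the induction.
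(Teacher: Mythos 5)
Your proposal is correct and follows essentially the same route as the paper: an induction on $N$, using the strong Markov property at the first transition time $\tau_1^1$ to reduce to the one-step identity of Proposition~\ref{prop:moyenne1} applied to the function $y_1\mapsto\Psi(y_1)$, and then unfolding the inner layers via the induction hypothesis. The only differences are notational (the paper reverses the labelling of the times $T_1,\ldots,T_N$ in its induction hypothesis and passes from $N$ to $N+1$ rather than from $N-1$ to $N$), and your explicit remarks on the measurability of $\Psi$ and the use of Fubini are reasonable points that the paper leaves implicit.
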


\begin{proof}
We prove the result by induction on $N\geq1$. Let $N\geq1$, $T_1,\ldots,T_{N+1}>0$ and $f_N:E^{N}\to\mathbb{R}$ a bounded $\mathcal E^{\otimes N}$-mesurable function. For all $N\geq1$, we set
\[
H_N(\mathbf{X}^1)=\mathbf{1}_{\{\tau_1^1-\tau_0^1\leq T_N\}}\ldots \,\mathbf{1}_{\{\tau_N^1-\tau_{N-1}^1\leq T_1\}}\,f_N(\mathbf{X}_{\tau_1}^1,\ldots,\mathbf{X}_{\tau_N^1}^1)
\]
with $\tau_0^1:=0$. With this notations, the induction hypothesis is given by the following equality
\begin{align*}
\esp_x^1\big[H_N(\mathbf{X}^1)\big]&=\mathbf{E}_x^\ori\Big[\int_0^{T_N}\deriv t_N\,G'\Big(\int_0^{t_N}\,b(X_{s}^\ori)\,\deriv s\Big)\,b(X_{t_N}^\ori)\int_E\pi(X_{t_N}^\ori,\deriv y_N)\\
&\times\mathbf{E}_{y_N}^\ori\Big[\ldots\mathbf{E}_{y_{2}}^\ori\Big[\int_0^{T_1}\deriv t_1\,G'\Big(\int_0^{t_1}b(X_{s}^\ori)\,\deriv s\Big)\,b(X_{t_1}^\ori)\\
&\times\int_E\pi(X_{t_1}^\ori,\mathrm dy_1)\,f_N(y_N,\ldots,y_1)\Big]\ldots\Big]\Big].
\end{align*}
For $N=1$, it is the previous proposition. In order to prove the property for $N+1$, we recall first that, for all  $k\geq0$, $\tau_{k+1}^1=\tau_k^1+\tau_1^1(\Tilde{\theta}_{\tau_k^1})$ (where $\Tilde{\theta}_{\tau_k^1}$ is the shift operator, see Section~\ref{sec:appendixPO} for details)
%\denis{où est défini $\tau_{k+1}^1$?}
, and hence
\begin{align*}
H_{N+1}(\mathbf{X}^1)&=\mathbf{1}_{\{\tau_1^1-\tau_0^1\leq T_{N+1}\}}\ldots \,\mathbf{1}_{\{\tau_{N+1}^1-\tau_{N}^1\leq T_1\}}\,f_{N+1}(\mathbf{X}_{\tau_1^1}^1,\ldots,\mathbf{X}_{\tau_{N+1}^1}^1)\\
&=\mathbf{1}_{\{\tau_1^1-\tau_0^1\leq T_{N+1}\}}\,\tilde H_{N}(\Tilde{\theta}_{\tau_1^1}\mathbf{X}^1),
\end{align*}
where $f_{N+1}:E^{N+1}\to\mathbb R$ is a bounded $\mathcal E^{\otimes(N+1)}$-measurable function and 
\[
\tilde H_{N}(\mathbf{X}^1)=\mathbf{1}_{\{\tau_1^1-\tau_0^1\leq T_N\}}\ldots \,\mathbf{1}_{\{\tau_N^1-\tau_{N-1}^1\leq T_1\}}\,f_{N+1}(\mathbf X_0^1,\ldots,\mathbf{X}_{\tau_N^1}^1)
\]
Using the strong Markov property at time $\tau_1$, see Proposition~\ref{prop:propMarkov}, we deduce that $\mathbf P_x^1$-almost surely,
\begin{align*}
\mathbf E_x^1\big[H_{N+1}(\mathbf{X}^1)\,|\,\mathcal{F}_{\tau_1^1}^1\big]&=\mathbf{1}_{\{\tau_1^1-\tau_0^1\leq T_{N+1}\}}\,\mathbf E_x^1\big[\tilde H_{N}(\theta_{\tau_1^1}\mathbf{X}^1)\,\big|\,\mathcal{F}_{\tau_1^1}\big]\\
&=\mathbf{1}_{\{\tau_1^1-\tau_0^1\leq T_{N+1}\}}\,\mathbf E_{\mathbf{X}_{\tau_1^1}^1}^1\big[\tilde H_N(\mathbf{X}^1)\big].
\end{align*}
Then, using the law of $\tau_1^1$, we deduce that $\esp_x^1\big[H_{N+1}(\mathbf{X}^1)\big]$ equals
\[
\mathbf E_x^1\Big[\int_0^{T_{N+1}}\deriv t_{N+1}\,G'\Big(\int_0^{t_{N+1}}b(X_{s}^\ori)\,\deriv s\Big)\,b(X_{t_{N+1}}^\ori)\int_E\pi(X_{t_{N+1}}^\ori,\deriv y_{N+1})\,\esp_{y_{N+1}}^1\big[\tilde H_N(\mathbf{X}^1)\big]\Big].
\]
So by the induction assumption, applied to $f_N(y_N,\ldots,y_1)=f_{N+1}(y_{N+1},y_N,\ldots,y_1)$, concludes the proof.
\end{proof}

\begin{proposition}
\label{prop:coupleN}
Let $f:\mathbb R_+^N\times E^{N}\longrightarrow\mathbb R$ a bounded $\mathcal B(\mathbb R_+^N)\otimes \mathcal{E}^{\otimes N}$-measurable function, then for all $n,N\geq 1$ we have
\begin{align*}
    \esp_x^1\big[f(\tau_1^1,&\ldots,\tau_N^1,\mathbf X_{\tau_1^1}^1,\ldots,\mathbf X_{\tau_N^1}^1)\big]\\
    &=\mathbf E_x^\ori\Big[\int_0^\infty\deriv t_1\,G'\Big(\int_0^{t_1}b(X_{s}^\ori)\,\deriv s\Big)\,b(X_{t_1}^\ori)\int_E\pi(X_{t_1}^\ori,\deriv y_1)\ldots\mathbf E_{y_{N-1}}^\ori\Big[\int_0^\infty\,\deriv t_N\\
    &\times G'\Big(\int_0^{t_N}b(X_{s}^\ori)\,\deriv s\Big)\,b(X_{t_N}^\ori)\int_E\pi(X_{t_N}^\ori,\deriv y_N)\,f(t_1,\ldots,t_1+\ldots+t_N,y_1,\ldots,y_N)\Big]\ldots\Big].
\end{align*}
\end{proposition}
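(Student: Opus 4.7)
The plan is to deduce Proposition~\ref{prop:coupleN} from Proposition~\ref{prop:moyenneN} through a functional monotone class argument, combined with the change of variables from cumulative jump times to inter-arrival times.

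I would first set
\[
\tilde f(t_1,\ldots,t_N,y_1,\ldots,y_N) := f(t_1,\,t_1+t_2,\,\ldots,\,t_1+\cdots+t_N,\,y_1,\ldots,y_N),
\]
which is a bounded $\mathcal B(\mathbb R_+^N)\otimes\mathcal E^{\otimes N}$-measurable function since the affine map $(t_1,\ldots,t_N)\mapsto(t_1,t_1+t_2,\ldots,t_1+\cdots+t_N)$ is a bi-measurable bijection of $\mathbb R_+^N$. With the convention $\tau_0^1=0$, the identity to be proved is equivalent to
\[
\esp_x^1\bigl[\tilde f(\tau_1^1-\tau_0^1,\ldots,\tau_N^1-\tau_{N-1}^1,\mathbf X_{\tau_1^1}^1,\ldots,\mathbf X_{\tau_N^1}^1)\bigr]=\mathrm{RHS}(\tilde f),
\]
where $\mathrm{RHS}(\tilde f)$ denotes the nested integral of the statement, viewed as the integral of $\tilde f$ against the iterated (sub-probability) kernel in the variables $(t_k,y_k)_{k=1}^N$.

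Next, let $\mathcal H$ be the collection of bounded measurable $\tilde f$ for which this identity holds. Both sides are linear in $\tilde f$ and stable under bounded monotone convergence: the right-hand side is so by dominated convergence, using that $t\mapsto G'(\int_0^t b(X_s^\ori)\,\deriv s)\,b(X_t^\ori)$ is a sub-probability density on $\mathbb R_+$ and $\pi(x,\cdot)$ is a probability measure on $E$ for each $x$, which together bound each nested integral by $\|\tilde f\|_\infty$. By the functional monotone class theorem, it therefore suffices to verify the identity on a $\pi$-system generating $\mathcal B(\mathbb R_+^N)\otimes\mathcal E^{\otimes N}$, namely on product indicators
\[
\tilde f(t_1,\ldots,t_N,y_1,\ldots,y_N)=\mathbf 1_{\{t_1\leq T_1\}}\cdots\mathbf 1_{\{t_N\leq T_N\}}\,\mathbf 1_{A_1}(y_1)\cdots\mathbf 1_{A_N}(y_N)
\]
with $T_k>0$ and $A_k\in\mathcal E$.

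For such $\tilde f$, the left-hand side is exactly the expectation in Proposition~\ref{prop:moyenneN} applied with $f(y_1,\ldots,y_N)=\prod_k\mathbf 1_{A_k}(y_k)$, while on the right-hand side each indicator $\mathbf 1_{\{t_k\leq T_k\}}$ truncates the corresponding integral from $[0,\infty)$ to $[0,T_k]$, producing precisely the RHS of Proposition~\ref{prop:moyenneN}. This step is just careful bookkeeping of the change of variables, which was absorbed once and for all in the definition of $\tilde f$; no substantive obstacle arises, since all the analytic content of the argument is already in Proposition~\ref{prop:moyenneN}.
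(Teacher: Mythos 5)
Your proof is correct, but it follows a genuinely different route from the paper's. The paper proves Proposition~\ref{prop:coupleN} by a fresh induction on $N$: the base case $N=1$ is asserted directly from the construction of $\mathbf X^1$ (the joint law of $(\tau_1^1,\mathbf X_{\tau_1^1}^1)$), and the inductive step conditions on $\mathcal F_{\tau_1^1}$ via the strong Markov property, applying the induction hypothesis to the time-shifted function $f(\,\cdot\,,\,\cdot\,+t_1,\ldots)$ — essentially a second run of the argument already used for Proposition~\ref{prop:moyenneN}, now keeping track of the jump times. You instead treat Proposition~\ref{prop:moyenneN} as having identified the joint law of $(\tau_1^1-\tau_0^1,\ldots,\tau_N^1-\tau_{N-1}^1,\mathbf X_{\tau_1^1}^1,\ldots,\mathbf X_{\tau_N^1}^1)$ on the generating $\pi$-system of product sets $[0,T_1]\times\cdots\times[0,T_N]\times A_1\times\cdots\times A_N$, and extend to all bounded measurable $\tilde f$ by the functional monotone class theorem, absorbing the cumulative-versus-interarrival reparametrisation into the definition of $\tilde f$. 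Your route avoids duplicating the Markov-property induction and makes transparent that Proposition~\ref{prop:coupleN} is a purely measure-theoretic consequence of Proposition~\ref{prop:moyenneN}; the price is that you must justify that the nested right-hand side defines integration against a genuine (sub-probability) iterated kernel on $\mathbb R_+^N\times E^N$, so that linearity and bounded monotone convergence hold — which you do note, and which holds since each time-integral has total mass $G(\int_0^\infty b(X_s^\ori)\,\deriv s)\le 1$ and $\pi$ is a probability kernel. Two minor points: the affine map $(t_1,\ldots,t_N)\mapsto(t_1,t_1+t_2,\ldots)$ is a bijection of $\mathbb R_+^N$ onto the ordered cone $\{s_1\le\cdots\le s_N\}$ rather than onto all of $\mathbb R_+^N$, but measurability of $\tilde f$ is immediate anyway; and to apply the monotone class theorem you need the constant function $1$ in your class, which you obtain by letting $T_1,\ldots,T_N\to\infty$ monotonically, so no gap there.
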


\begin{proof}
We prove the result by induction on $N\geq1$. For $N=1$, let $f:\mathbb R\times E\longrightarrow\mathbb R$ a bounded $\mathcal E^{\otimes2}$-measurable function, by definition of the process $\mathbf X^1$, we have
\begin{align*}
\esp_x^1\big[f(\tau_1^1,\mathbf X_{\tau_1^1}^1)\big]&=\mathbf E_x^\ori\Big[\int_0^\infty\deriv t\,G'\Big(\int_0^tb(X_s^\ori)\,\deriv s\Big)\,b(X_t^\ori)\int_E\pi(X_t^\ori,\deriv y)\,f(t,y)\Big].
\end{align*}
Now, let $f:\mathbb R^{N+1}\times E^{N+1}\longrightarrow\mathbb R$ a bounded $\mathcal B(\mathbb R^{N+1})\otimes E^{N+1}$-measurable function, by the strong Markov property at time $\tau_1$
\[
    \esp_x^1\big[f(\tau_1^1,\ldots,\tau_{N+1}^1,\mathbf X_{\tau_1^1}^1,\ldots,\mathbf X_{\tau_{N+1}^1}^1)\big]=\esp_x^1\big[\psi(\tau_1^1,\mathbf X_{\tau_1^1}^1)\big],
\]
where, for all $s\geq0$ and $x\in E$, $\psi(s,x):=\esp_x^1[f(s,s+\tau_1^1,\ldots,s+\tau_{N}^1,x,\mathbf X_{\tau_1^1}^1,\ldots,\mathbf X_{\tau_{N}^1}^1)]$. By the induction hypotheses applied to the function 
\[
(t_1,\ldots,t_N,x_1,\ldots,x_N)\in E^{2N}\mapsto f(\,\cdot\,,\,\cdot\,+t_1,\ldots,\,\cdot\,+t_N,\,\cdot\,,x_1,\ldots\,x_N)
\]
we have, for all $s\geq0$ and $x\in E$,
\begin{align*}
    \psi(s,x)&=\mathbf E_x^\ori\Big[\int_0^\infty\deriv t_2\,G'\Big(\int_0^{t_2}b(X_{s}^\ori)\,\deriv s\Big)\,b(X_{t_2}^\ori)\int_E\pi(X_{t_2}^\ori,\deriv y_2)\ldots\mathbf E_{y_{N}}^\ori\Big[\\
    &\times\int_0^\infty\,\deriv t_{N+1} G'\Big(\int_0^{t_{N+1}}b(X_{s}^\ori)\,\deriv s\Big)\,b(X_{t_{N+1}}^\ori)\int_E\pi(X_{t_{N+1}}^\ori,\deriv y_{N+1})\\
    &\times f(s,s+t_2,\ldots,s+t_2+\ldots+t_N,x,y_2,\ldots,y_{N+1})\Big]\ldots\Big].
\end{align*}
So, again by definition of the process $\mathbf X^1$, we get
\[
    \esp_x^1\big[\psi(\tau_1^1,\mathbf X_{\tau_1^1}^1)\big]=\mathbf E_x^\ori\Big[\int_0^\infty\,\deriv t_1\,G'\Big(\int_0^{t_1}b(X_s^\ori)\,\deriv s\Big)\,b(X_s^\ori)\int_E\pi(X_{t_1}^\ori,\deriv y_1)\,\psi(t_1,y_1)\Big],
\]
which conclude the proof of the proposition.
\end{proof}

\section{Main results}
\label{sec:main-results}

\subsection{Accelerated and index processes}

In this section,  we consider, for all $n\geq 1$, a time-accelerated version $\mathbf{X}^n$ of the process $\mathbf{X}^1$. 
More precisely, consider  the time-accelerated process  $(X_t^{\ori,n})_{t\geq 0}:=(X_{nt}^\ori)_{t\geq 0}$ and recall that, for all $i\in I$ and $x_i\in E_i$,
\begin{equation}
	\label{hyp:homogeneity}
	\mathbf{P}_{x_i}^\ori\big(\forall t\geq0,X_t^{\ori,n}\in E_i\big)=\mathbf{P}_{x_i}^\ori\big(\forall t\geq0,X_t^\ori\in E_i\big)=1.
\end{equation}
As in the previous section but replacing $X^\ori$  by $X^{\ori,n}$, we define a process 
\[
\mathbf{X}^n=(\Omega,\mathcal{F}^n,(\mathbf{X}_t^n)_{t\geq0},(\mathbf{P}_x^n)_{x\in E},T_\Delta^n)
\]
on $\Omega$ taking values in $E\cup\{\Delta\}$, which evolves following the dynamics of $X^{\ori,n}$ and transitions at rate $b$ according to the transition kernel $\pi$. 
 We denote by $0<\tau_1^n<\tau_2^n<\cdots$ the transition times of the process $\mathbf X^n$ between the sets $E_i$, $i\in I$, and the lifetime of $\mathbf{X}^n$ is $T_\Delta^n=\lim_{k\to+\infty} \tau_k^n$.

We also define the index process $\mathbf{I}^n=(\mathbf{I}^n_t)_{t\geq0}$ as follows: for all $t\geq0$ and $i\in I$,
\[
\mathbf{I}^n_t=i\:\quad\textup{if and only if}\quad\mathbf{X}^n_t\in E_i.
\]
We observe that $\mathbf{I}^n_t=\phi(\mathbf{X}^n_t)$ for all $t\geq0$.

In the remaining of this paper, we assume the following ergodic property for the process restricted to each $E_i$. Let $\mathcal{A}_i$ the subset of functions $f \in L^1(\mu_i)$ such that, for all $x \in E_i$ and all $t \geq 0$,
\begin{equation}
    \label{eq:finite-integral-trajectory}
    \mathbf{P}_{x_i}^\ori \Big( \int_0^t | f(X_s^\ori) |\, \deriv s < + \infty \Big) = 1.
\end{equation}
\begin{assumption}
\label{assumption:ergo}

For all $i\in I$, there exists a probability distribution $\mu_i$ on $E_i$ such that, for all $h\in \mathcal{A}_i$,
 and $x_i\in E_i$
\begin{equation}
\label{hyp:ergo}
\lim_{t\to\infty}\frac{1}{t}\int_0^t h(X_s^\ori)\,\deriv s=\mu_i(h)\quad\mathbf{P}_{x_i}^\ori\textup{-p.s.}
\end{equation}
where $\mu_i(h):=\int_E h(x)\,\mu_i(\deriv x)$.
\end{assumption}
We also make the following assumption on the transition rate function $b$:
\begin{assumption}
\label{hyp:b-UI}
 We assume that the family
\begin{equation}
	\label{hyp:unifintegr}
	\Big(\frac{1}{n}\int_0^{nt} b(X_s^\ori)\,\deriv s\Big)_{n\geq 1}
\end{equation}
is uniformly integrable for all $t\geq0$ under $\mathbf{P}_{x}^\ori$ for all $x\in E$.
\end{assumption}

\begin{remark}
In the Markov case, Assumption~\ref{assumption:ergo} is equivalent to the positive Harris recurrence of $X^\ori$ on each $E_i$. This rather strong assumption enables to make no assumption on the regularity of the transition rate $b$, as well as to state strong convergence results (ie, for bounded measurable functionals). The non regularity of $b$ can be useful in some applications. For example,  in \cite{BBCF25}, the authors consider an individual based model where the birth rate function has a discontinuity point. Assumptions~\ref{assumption:ergo} and~\ref{hyp:b-UI} are commented in greater detail in Section~\ref{sec:comments-hyp}.
\end{remark}

Our purpose is to prove that the index process $\mathbf I^n$ converges, when $n$ goes to infinity, to a simpler process. At the limit, the position of the process in $E$ is not necessarily well defined, while the index process evolves autonomously: the jump times and post-jump locations of the index process do no longer depends on the position in $E$. We also prove that the sequence of positions of the process in $E$ at  jump times converges to a Markov chain on $E$.

To introduce the limiting index process and limiting Markov chain, we first recall the definition of $\phi$, the projection of $E$ on $I$ i.e. for all $x\in E$, $\phi(x)=i$ if and only if $x\in E_i$. We also define a transition kernel $\nu$  from $E$ to $I$ as  the pushforward of $\pi$ by $\phi$:  for all $f:I\to[0,\infty]$ a $\mathcal I$-measurable function we have
\[
    \int_I f(i)\,\nu(x,\deriv i)=\int_Ef\circ\phi(y)\,\pi(x, \deriv y).
\]
Note that for all $A \in \mathcal{I}$ and $x \in E$,
\[
\nu(x, A) = \pi\Big(x, \bigcup_{i \in A} E_i\Big)
\]
\noindent \textbf{Definition of the limiting index process.} 
Let $(\Omega,\mathcal F, (\mathbf I_t)_{t\geq 0},(\mathbf P_i)_{i\in I})$ be a semi-Markov process whose dynamics are defined as follows:  for all $i\in I$, under $\mathbf P_i$  $(\mathbf I_t)_{t\geq 0}$ is an $I$-valued pure jump semi-Markov process 
with $\mathbf P_i(\mathbf I_0=i)=1$, such that its  first jump time $\tau_1$ satisfies
\[
\mathbf P_i(\tau_1\leq t)=G(t\mu_i(b)),\ \forall t\geq 0,
\]
and such that its jumps transitions probabilities $(P(i,j))_{i,j\in I}$ are given by
\begin{equation}
\label{eq:jumpMarkov}
P(i,J)=\int_E\frac{\mu_i(\deriv x)}{\mu_i(b)}\,b(x)\,\nu(x,J), \quad \forall J \in\mathcal{I}.
\end{equation}
We note $(\tau_k)_{k\geq1}$ the sequence of jump times of $\mathbf I$. We define the explosion time of $\mathbf I$ as
\[
    \tau_\infty=\lim_{k\to\infty}\tau_k\in[0,+\infty],
\]
and we set $\mathbf I_t=\Delta_{I}$ $\forall t\geq \tau_\infty$, where $\Delta_{I}\notin I$ is a cemetery point. 
 
\medskip 
\noindent \textbf{Definition of the limiting Markov chain on $E$.} 
Let $(Y_n)_{n \geq 0}$ the Markov chain on $E\cup \{\Delta\}$, whose transition kernel $Q$ is defined as follows: for all $x\in E\cup\{\Delta\}$ and bounded measurable function $f:E \cup \{\Delta\} \to\mathbb R$,
\[
Qf(x) = \begin{cases}
\int_E\frac{\mu_{\phi(x)}(\deriv y)}{\mu_{\phi(x)}(b)}\,b(y)\,\pi(y,f),\quad \text{ if }\mu_{\phi(x)}(b)>0, \\
f(\Delta),\quad \text{ if }\mu_{\phi(x)}(b)=0,
\end{cases}
\]
where $b(\Delta):=0$.
Informally, starting from $Y_0 =x$, with probability $\frac{\mu_{\phi(x)}(b \,\cdot)}{\mu_{\phi(x)}}$, one chooses a position, say $Y_1^-$, before the jump, and then picks $Y_1$ according to $\pi(Y_1^-,\,\cdot\,)$.

Note that, for all $x\in E$, $Q f(x)$ depends on $x$ only through $\phi(x)$, so that the law of $Y_{n+1}$ depends on $Y_n$ only through $\phi(Y_n)$.
%On peut décomposer l'action de $Q$ en deux morceaux:
We also observe that
\[
Pf(\phi(x)) = Q (f \circ \phi)(x)
\]
so that $\mathbf I_{\tau_n} = \phi(\mathbf{Y}_{n})$, for all $n\geq 0$.

\subsection{Convergence  of the index process}

In this section, we first state the convergence of $\mathbf X^n$ when evaluated at the first jump time. We then consider the convergence of the process evaluated at successive jump times. The following result is proved in Section~\ref{sec:proofprop:convergence1}.

\begin{proposition}
\label{prop:convergence1}
Fix $T>0$ and let  $f:E\longrightarrow\mathbb{R}$ be a bounded $\mathcal{E}$-measurable function. 
Then, for all $i\in I$, $x_i\in E_i$
\[
\lim_{n\to\infty}\espn_{x_i}\big[f(\mathbf{X}_{\tau_1^n}^n)\,\mathbf{1}_{\{\tau_1^n\leq T\}}\big]=\Big(\int_0^T\mu_i(b)\,G'(t\mu_i(b))\,\deriv t\Big)\times\Big(\int_E\frac{\mu_i(\deriv x)}{\mu_i(b)}\,b(x)\,\pi(x,f)\Big).
\]
\end{proposition}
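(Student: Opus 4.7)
The plan is to apply Proposition~\ref{prop:moyenne1} to $\mathbf X^n$, rewrite the resulting expectation as a Stieltjes integral against two ergodic averages of $X^\ori$, and pass to the limit by combining Assumption~\ref{assumption:ergo} with an $L^1$ approximation of $G'$ by continuous functions and the uniform integrability provided by Assumption~\ref{hyp:b-UI}.

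Applying Proposition~\ref{prop:moyenne1} to $\mathbf X^n$ and using $\int_0^t b(X^{\ori,n}_s)\,\deriv s = \tfrac{1}{n}\int_0^{nt}b(X^\ori_u)\,\deriv u$, we rewrite
\[
I_n := \espn_{x_i}\bigl[f(\mathbf X^n_{\tau_1^n})\mathbf 1_{\{\tau_1^n\le T\}}\bigr] = \mathbf E^\ori_{x_i}\Bigl[\int_0^T G'(B_n(t))\,\deriv C_n(t)\Bigr],
\]
where $B_n(t) := \tfrac1n\int_0^{nt} b(X^\ori_s)\,\deriv s$ and $C_n(t) := \tfrac1n\int_0^{nt} b(X^\ori_s)\pi(X^\ori_s,f)\,\deriv s$. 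Setting $M := \mu_i(b)$, Assumption~\ref{assumption:ergo} applied to $b$ and to the bounded perturbation $b\,\pi(\cdot,f)$ yields, pointwise in $t$ and $\mathbf P^\ori_{x_i}$-a.s., $B_n(t) \to Mt$ and $C_n(t) \to t\mu_i(b\pi(\cdot,f))$. Since $B_n, C_n$ are non-decreasing with continuous deterministic limits, the classical Pólya-type upgrade of monotone pointwise convergence turns this into uniform convergence on $[0,T]$.

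Since $G$ is absolutely continuous, for every $\epsilon>0$ we choose a bounded continuous $g_\epsilon:[0,\infty)\to\mathbb R$ with $\|G'-g_\epsilon\|_{L^1(\mathbb R_+)}<\epsilon$, and denote by $I_n^\epsilon$ the analogue of $I_n$ with $g_\epsilon$ in place of $G'$. The change of variable $u = B_n(t)$ in the absolutely continuous non-decreasing integrator $B_n$ gives the uniform bound
\[
|I_n - I_n^\epsilon| \le \|f\|_\infty\,\mathbf E^\ori_{x_i}\Bigl[\int_0^{B_n(T)}|G'(u)-g_\epsilon(u)|\,\deriv u\Bigr] \le \|f\|_\infty\,\epsilon.
\]
For $I_n^\epsilon$ itself, split $g_\epsilon(B_n(t)) = g_\epsilon(Mt) + [g_\epsilon(B_n(t)) - g_\epsilon(Mt)]$. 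The contribution of $g_\epsilon(Mt)$ converges a.s.\ to $\mu_i(b\pi(\cdot,f))\int_0^T g_\epsilon(Mt)\,\deriv t$ by weak convergence of the measures $\deriv C_n$ (inherited from the uniform convergence of $C_n$) against the continuous test function $g_\epsilon(M\,\cdot)$; the other contribution is bounded by $\sup_{t\in[0,T]}|g_\epsilon(B_n(t))-g_\epsilon(Mt)|\cdot\|f\|_\infty B_n(T)$ and vanishes a.s.\ by uniform continuity of $g_\epsilon$ on compacts and uniform convergence of $B_n$. The UI of $B_n(T)$ from Assumption~\ref{hyp:b-UI}, together with boundedness of $g_\epsilon$ and $f$, allows us to exchange limit and expectation, yielding $I_n^\epsilon \to \mu_i(b\pi(\cdot,f))\int_0^T g_\epsilon(Mt)\,\deriv t$.

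Combining the two estimates and using $|\int_0^T (G'-g_\epsilon)(Mt)\,\deriv t| \le \epsilon/M$, we obtain $\limsup_n |I_n - \mu_i(b\pi(\cdot,f))\int_0^T G'(Mt)\,\deriv t| = O(\epsilon)$; letting $\epsilon\downarrow 0$ and observing that $\int_0^T G'(Mt)\,\deriv t = G(TM)/M$ gives precisely the right-hand side of the proposition. The main technical obstacle is the possible lack of continuity of the density $G'$, which is why the $L^1$-approximation is needed; viewing $I_n$ as a Stieltjes integral against the integrator $B_n$ is the key step that converts $L^\infty$-sensitivity of the integrand into an $L^1$-bound on $G'-g_\epsilon$.
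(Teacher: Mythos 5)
Your proposal is correct in substance and follows the same overall strategy as the paper's proof: apply Proposition~\ref{prop:moyenne1} to the accelerated process, use the ergodic Assumption~\ref{assumption:ergo} together with a Dini-type upgrade to uniform convergence of the monotone time integrals, approximate $G'$ in $L^1$ and control the error by the time change $u=B_n(t)$ (the paper invokes Proposition~4.10 of Chapter~0 in Revuz--Yor for exactly this), and use the uniform integrability of Assumption~\ref{hyp:b-UI} to pass the limit through the expectation. Where you genuinely diverge is in extracting the limit of the main term: the paper approximates $G'$ by a \emph{smooth} compactly supported $G'_\varepsilon$ and integrates by parts twice, pushing the derivative onto $G''_\varepsilon$ so that only the uniform convergence of $t\mapsto\mathbf E^\ori_{x_i}[\int_0^t b(X^\ori_{ns})\pi(X^\ori_{ns},f)\,\deriv s]$ is needed; you instead keep $g_\varepsilon$ merely continuous and read the convergence off as weak convergence of the Stieltjes measures $\deriv C_n$ tested against $g_\varepsilon(M\,\cdot)$. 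Your route is somewhat lighter (no second derivative, no double integration by parts) and equally rigorous once the weak convergence on $[0,T]$, including mass convergence $C_n(T)\to C_\infty(T)$, is spelled out.

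Three small points should be fixed. First, your argument divides by $M=\mu_i(b)$ (in the bound $\varepsilon/M$ and in $G(TM)/M$), so it silently assumes $\mu_i(b)>0$; the paper treats the degenerate case $\mu_i(b)=0$ separately by showing $\tau_1^n\to+\infty$ almost surely, and you should do the same. Second, the monotonicity of $C_n$ that your Pólya/Dini step relies on requires $\pi(\cdot,f)\geq 0$; you need the standard preliminary reduction to $f\geq 0$ by splitting $f=f_+-f_-$, as the paper does. Third, invoking Assumption~\ref{assumption:ergo} for $b$ and for $b\,\pi(\cdot,f)$ presupposes that these functions lie in $\mathcal{A}_i$; this is not part of Assumption~\ref{assumption:ergo} itself but follows from Assumption~\ref{hyp:b-UI} via Lemma~\ref{lem:UI-implies-L1mu} (and then $|b\,\pi(\cdot,f)|\leq\|f\|_\infty\, b$), so the reference should be made explicit.
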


\begin{remark}
\label{rem:oscillation}
Let us observe that we do not have, in general, convergence of $X_t^n$ for a fixe $t>0$, since our assumptions allow for periodic behaviors.  Indeed, consider the simple example where $I=\{1,2\}$, $E_1=[-1,1]$ and $E_2=[2,4]$, $X^\ori_t:=\cos(X^\ori_0+t)$ on $E_1$ and $X^\ori_t:=3+\cos(X^\ori_0+t)$, $b\equiv 1$ and $\pi(x,\cdot)=\mathbf 1_{x\in E_1}\delta_3+\mathbf 1_{x\in E_2}\delta_0$. If $\mathbf X^n_0=0$, then, at any time $t\geq 0$, there is a probability $e^{-t}$ that the process has not jumped before time $t$, and hence that $\mathbf X^n_t=\cos(nt)$. Hence with probability at least $e^{-t}$ the position $\mathbf X^n_t$ oscilates between $-1$ and $1$ as $n\to \infty$ and does not converge.
\end{remark}

\begin{remark}
We note that $X^n_{\tau_1^n}$ converges in law to a random variable distributed according to $\mu(b\,\cdot\,)/\mu(b)$, which is in general not equal to $\mu$. Moreover, $(\tau_1^n,X_{\tau_1^n})$ converges in law to a couple of independent variables.
\end{remark}

\begin{remark}
The proof of Proposition~\ref{prop:convergence1} cannot be generalized to the case where $G$ is not absolutely continuous, since Proposition~\ref{prop:moyenne1} requires the absolute continuty of $G$. Actually, there are examples where $G$ is not absolutely continuous and the convergence result of Proposition~\ref{prop:convergence1} does not hold true. 
For instance, if $b\equiv 1$ and $G(x)=\mathbf 1_{x\geq 1}$, then $\tau_1^n=1$ almost surely, and one faces the same issue as in Remark~\ref{rem:oscillation}
\end{remark}

We are now in a position to state one of our main result, proved in Section~\ref{sec:proofthm:cv-I-jump_times}.
\begin{theorem}
\label{thm:cv-X-jump_times}
Let $x\in E$, $N\geq0$, $T_N>\ldots>T_1>0$ positive real numbers and $f: E^N\rightarrow\mathbb{R}$ a bounded measurable function, then
\begin{align*}
	\espn_{x}\big[f(\mathbf{X}_{\tau_1^n}^n,\ldots,\mathbf{X}_{\tau_N^n}^n)\,\mathbf{1}_{\{\tau_1^n\leq T_1\}}\ldots&\mathbf{1}_{\{\tau_N^n-\tau_{N-1}^n\leq T_N\}}\big] \\
	&\underset{n\to\infty}{\longrightarrow}\mathbf E_{x}\big[f\big(\mathbf{Y}_{1},\ldots,\mathbf{Y}_{N}\big)\,\mathbf{1}_{\{\tau_1\leq T_1\}}\ldots\mathbf{1}_{\{\tau_{N}-\tau_{N-1}\leq T_N\}}\big].
\end{align*}
\end{theorem}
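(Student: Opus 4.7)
The plan is to proceed by induction on $N$. The case $N=0$ is trivial, and the case $N=1$ follows from Proposition~\ref{prop:convergence1}: with $i=\phi(x)$, the independence of $\tau_1$ and $\mathbf Y_1$ under $\mathbf P_x$ (both depending on $x$ only through $i$) gives $\mathbf E_x[f(\mathbf Y_1)\mathbf 1_{\tau_1\leq T_1}]=G(T_1\mu_i(b))\,Qf(x)$, which matches the limit produced by Proposition~\ref{prop:convergence1}.

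For the induction step from $N-1$ to $N$, I would apply the strong Markov property of $\mathbf X^n$ at its first jump time $\tau_1^n$ to rewrite the left-hand side as $\mathbf E^n_x[\mathbf 1_{\{\tau_1^n\leq T_1\}}\,h_n(\mathbf X^n_{\tau_1^n})]$, where
\[
h_n(y):=\mathbf E^n_y\Bigl[f(y,\mathbf X^n_{\tau_1^n},\ldots,\mathbf X^n_{\tau_{N-1}^n})\prod_{k=1}^{N-1}\mathbf 1_{\{\tau_k^n-\tau_{k-1}^n\leq T_{k+1}\}}\Bigr].
\]
Applying the induction hypothesis to the bounded function $(z_1,\ldots,z_{N-1})\mapsto f(y,z_1,\ldots,z_{N-1})$ with thresholds $T_2,\ldots,T_N$ yields $h_n(y)\to h(y)$ pointwise in $y\in E$, where
\[
h(y):=\mathbf E_y\Bigl[f(y,\mathbf Y_1,\ldots,\mathbf Y_{N-1})\prod_{k=1}^{N-1}\mathbf 1_{\{\tau_k-\tau_{k-1}\leq T_{k+1}\}}\Bigr],
\]
and moreover $\|h_n\|_\infty\leq\|f\|_\infty$ uniformly in $n$. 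By the strong Markov property of the limit semi-Markov process at $\tau_1$, the right-hand side equals $\mathbf E_x[\mathbf 1_{\{\tau_1\leq T_1\}}h(\mathbf Y_1)]=G(T_1\mu_i(b))\,Qh(x)$, which is precisely the limit that Proposition~\ref{prop:convergence1} applied to the \emph{fixed} bounded test function $h$ would produce.

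It would therefore remain to show that $\mathbf E^n_x[\mathbf 1_{\{\tau_1^n\leq T_1\}}(h_n-h)(\mathbf X^n_{\tau_1^n})]\to 0$. By Proposition~\ref{prop:moyenne1} applied to the accelerated process, this residue equals
\[
\mathbf E^\ori_x\Bigl[\int_0^{T_1}G'\!\Bigl(\tfrac1n\!\int_0^{nt}b(X^\ori_s)\,\mathrm ds\Bigr)b(X^\ori_{nt})\,\pi(X^\ori_{nt},h_n-h)\,\mathrm dt\Bigr].
\]
This is the main obstacle: the integrand mixes two interlaced $n$-dependencies, namely the ergodic convergence $\frac1n\!\int_0^{nt}b(X^\ori_s)\mathrm ds\to t\mu_i(b)$ supplied by Assumption~\ref{assumption:ergo} and the pointwise null limit $\pi(\cdot,h_n-h)\to 0$, obtained by dominated convergence from $h_n\to h$ pointwise together with the uniform bound $|h_n-h|\leq 2\|f\|_\infty$. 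I would revisit the proof of Proposition~\ref{prop:convergence1} and upgrade it to accommodate bounded test functions depending on $n$ and converging pointwise: Assumption~\ref{hyp:b-UI} supplies the required uniform integrability of $\frac1n\!\int_0^{nT_1}b(X^\ori_s)\mathrm ds$, and combining this with the ergodic convergence of the empirical time-measure $\frac1{nT_1}\!\int_0^{nT_1}\delta_{X^\ori_s}\mathrm ds$ to $\mu_i$ ($\mathbf P^\ori_x$-a.s.) should reduce the residue to $G(T_1\mu_i(b))\cdot \mu_i(b\,\pi(\cdot,h_n-h))/\mu_i(b)$ up to $o(1)$, after which dominated convergence in the variable $y$ closes the argument.
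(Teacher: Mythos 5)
Your proposal follows essentially the same route as the paper: induction on $N$, the (strong) Markov property at $\tau_1^n$ to reduce to an $n$-dependent test function $h_n$ converging pointwise with a uniform bound, and then an upgrade of the single-jump convergence to accommodate such integrands. The one step you leave as an assertion --- that the ergodic average along the accelerated trajectory commutes with a pointwise-convergent, uniformly dominated sequence of integrands --- is precisely the paper's Lemma~\ref{lem:ergon}, proved via the exhaustion $E_\varepsilon^n=\{x:\Delta_m(x)\le \varepsilon/2\ \forall m\ge n\}$ (this detour is needed because Assumption~\ref{assumption:ergo} only gives convergence for each fixed function, not any weak convergence of the empirical occupation measure on the general measurable space $E_i$); after that the paper runs the same integration-by-parts/Dini and $C_c^\infty$-density arguments as in the proof of Proposition~\ref{prop:convergence1}, exactly as you anticipate.
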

By applying the previous result to $f \circ \phi$ where $f : E^N \to \mathbb R$ is a bounded measurable function, we obtain the following corollary:
\begin{corollary}
\label{cor:cv-I-jump_times}
Let $i\in I$, $x_i\in E_i$, $N\geq0$, $T_N>\ldots>T_1>0$ positive real numbers and $f: I^N\rightarrow\mathbb{R}$ a bounded measurable function, then
\begin{align*}
\mathbf E_{x_i}^n\big[f\big(\mathbf{I}_{\tau_1^n}^n,\ldots,\mathbf{I}_{\tau_N^n}^n\big)\,\mathbf{1}_{\{\tau_1^n\leq T_1\}}\ldots&\mathbf{1}_{\{\tau_{N}^n-\tau_{N-1}^n\leq T_N\}}\big]\\
&\underset{n\to\infty}{\longrightarrow}\mathbf E_{i}\big[f\big(\mathbf I_{\tau_1},\ldots,\mathbf I_{\tau_N}\big)\,\mathbf{1}_{\{\tau_1\leq T_1\}}\ldots\mathbf{1}_{\{\tau_{N}-\tau_{N-1}\leq T_N\}}\big].
\end{align*}
\end{corollary}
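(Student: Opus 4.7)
The plan is to derive the corollary as an immediate consequence of Theorem~\ref{thm:cv-X-jump_times} by pushing the statement forward through the measurable map $\phi$. I would introduce the bounded measurable function $g : E^N \to \mathbb{R}$ defined by
\[
g(y_1, \ldots, y_N) := f(\phi(y_1), \ldots, \phi(y_N)),
\]
whose measurability follows from Lemma~\ref{lem:mesE} (measurability of $\phi$) and whose boundedness is inherited from $f$. I then plan to apply Theorem~\ref{thm:cv-X-jump_times} to $g$ with the initial condition $x = x_i \in E_i$.

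The key observation on the left-hand side is the identity $\mathbf I^n_t = \phi(\mathbf X^n_t)$, which holds for every $t \geq 0$ by definition of the index process. Evaluating at the jump times $\tau_1^n, \ldots, \tau_N^n$ yields
\[
g(\mathbf X^n_{\tau_1^n}, \ldots, \mathbf X^n_{\tau_N^n}) = f(\mathbf I^n_{\tau_1^n}, \ldots, \mathbf I^n_{\tau_N^n}),
\]
so that the left-hand side of Theorem~\ref{thm:cv-X-jump_times} coincides, term by term, with the left-hand side of the corollary (the indicator events only involve the $\tau_k^n$, which are shared between $\mathbf X^n$ and $\mathbf I^n$). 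On the right-hand side, I would use the identity $\mathbf I_{\tau_k} = \phi(\mathbf Y_k)$ recalled in the excerpt (a direct consequence of $Pf(\phi(x)) = Q(f\circ\phi)(x)$), which gives $g(\mathbf Y_1, \ldots, \mathbf Y_N) = f(\mathbf I_{\tau_1}, \ldots, \mathbf I_{\tau_N})$.

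The only point requiring a one-line justification is the change of initial law: Theorem~\ref{thm:cv-X-jump_times} expresses its limit under $\mathbf E_{x_i}$ (the $\mathbf Y$-chain launched from $x_i \in E_i$), whereas the corollary writes it under $\mathbf E_i$ (the limiting index process $\mathbf I$ started at $i$). Since, as already noted in the excerpt, $Q h(x)$ depends on $x$ only through $\phi(x)$, the joint law of the sequence $(\tau_k, \phi(\mathbf Y_k))_{k \geq 1}$ under $\mathbf P_{x_i}$ depends on the starting point only through $\phi(x_i) = i$, and coincides with the joint law of $(\tau_k, \mathbf I_{\tau_k})_{k \geq 1}$ under $\mathbf P_i$. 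With these identifications, the corollary is an immediate specialization of Theorem~\ref{thm:cv-X-jump_times}. There is no real obstacle here: all the analytic content sits in Theorem~\ref{thm:cv-X-jump_times}, and the corollary is a clean bookkeeping consequence.
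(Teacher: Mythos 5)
Your proposal is correct and follows essentially the same route as the paper: both apply Theorem~\ref{thm:cv-X-jump_times} to the bounded measurable function $f\circ\phi$ on $E^N$, use $\mathbf I^n_t=\phi(\mathbf X^n_t)$ to rewrite the left-hand side, and identify the limit via $\mathbf I_{\tau_k}=\phi(\mathbf Y_k)$ together with the fact that the law of $(\tau_k,\phi(\mathbf Y_k))_k$ under $\mathbf P_{x_i}$ depends on $x_i$ only through $\phi(x_i)=i$. Your explicit remark on the change of initial law from $\mathbf E_{x_i}$ to $\mathbf E_i$ is exactly the point the paper handles by recalling $\mu_{x_i}=\mu_{\phi(x_i)}$, so nothing is missing.
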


\begin{remark}
Using Corollary~\ref{cor:cv-I-jump_times}, one easily checks that the convergence holds true for other functions, e.g. with $\mathbf{1}_{\{\tau_1^n\leq T_1\}}\ldots\mathbf{1}_{\{\tau_{N}^n\leq T_N\}}$ or with finite variation function of the times instead of $\mathbf{1}_{\{\tau_1^n\leq T_1\}}\ldots\mathbf{1}_{\{\tau_{N}^n-\tau_{N-1}^n\leq T_N\}}$, and, for appropriate state spaces, with regular functions of $\tau_1^n,\ldots,\tau_N^n$, $\mathbf{I}_{\tau_1^n}^n,\ldots,\mathbf{I}_{\tau_N^n}^n$ instead of $f\big(\mathbf{I}_{\tau_1^n}^n,\ldots,\mathbf{I}_{\tau_N^n}^n\big)\,\mathbf{1}_{\{\tau_1^n\leq T_1\}}\ldots\mathbf{1}_{\{\tau_{N}^n-\tau_{N-1}^n\leq T_N\}}$.
\end{remark}

\begin{remark}
Actually, the result still holds true for functions of $X_{\tau_1^n}^n,\ldots,X_{\tau_N^n}^n$, but the limiting expectation is quite involved. We refer the reader to Theorem~~\ref{thm:cv-X-jump_times} in Section~\ref{sec:proofthm:cv-I-jump_times}. 
\end{remark}

We consider now the question of convergence at fixed times. While the result does not hold true when considering the convergence of $\mathbf X_T^n$ (as explained in Remark~\ref{rem:oscillation}), it does hold true for the index process.

\begin{corollary}
\label{thm:cv-I-fixed-time}
Let $N\geq 1$ a positive integer, $T_N>\ldots>T_1>0$ strictly positive real numbers, $f:I^N\rightarrow\mathbb{R}$ a bounded measurable function and $x_i\in E_i$ where $i\in I$, then
\[
\mathbf E_{x_i}^n[f(\mathbf{I}_{T_1}^n,\ldots,\mathbf{I}_{T_N}^n)\,\mathbf 1_{\{T_N\leq\tau_{k}^n\}}]\underset{n\to\infty}{\longrightarrow}\mathbf E_i[f(\mathbf I_{T_1},\ldots,\mathbf I_{T_N})\,\mathbf 1_{\{T_N\leq\tau_{k}\}}],\quad\forall k\geq1.
\]
\end{corollary}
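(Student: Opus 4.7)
The plan is to express $f(\mathbf{I}_{T_1}^n,\ldots,\mathbf{I}_{T_N}^n)\,\mathbf{1}_{\{T_N\leq\tau_k^n\}}$ as a deterministic bounded measurable functional of the jump times $\tau_1^n,\ldots,\tau_k^n$ and the post-jump indices $\mathbf{I}_{\tau_1^n}^n,\ldots,\mathbf{I}_{\tau_{k-1}^n}^n$, and then to invoke the joint convergence essentially provided by Corollary~\ref{cor:cv-I-jump_times}.

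First I would observe that on the event $\{T_N\leq\tau_k^n\}$, every $T_m$ lies in exactly one interval $[\tau_\ell^n,\tau_{\ell+1}^n)$ with $\ell\in\{0,\ldots,k-1\}$ (setting $\tau_0^n:=0$), and that $\mathbf{I}_{T_m}^n=\mathbf{I}_{\tau_\ell^n}^n$ there (with $\mathbf{I}_{\tau_0^n}^n:=i$). Since $T_1<\cdots<T_N$, the corresponding indices $\ell_1,\ldots,\ell_N$ are forced to be non-decreasing, so partitioning according to them yields
\begin{equation*}
f(\mathbf{I}_{T_1}^n,\ldots,\mathbf{I}_{T_N}^n)\,\mathbf{1}_{\{T_N\leq\tau_k^n\}}=\sum_{0\leq\ell_1\leq\cdots\leq\ell_N\leq k-1}f(\mathbf{I}_{\tau_{\ell_1}^n}^n,\ldots,\mathbf{I}_{\tau_{\ell_N}^n}^n)\prod_{m=1}^N\mathbf{1}_{\{\tau_{\ell_m}^n\leq T_m<\tau_{\ell_m+1}^n\}},
\end{equation*}
with the identical decomposition for the limit process $\mathbf{I}$. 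I would then rewrite $\mathbf{1}_{\{\tau_\ell\leq T_m<\tau_{\ell+1}\}}=\mathbf{1}_{\{\tau_\ell\leq T_m\}}-\mathbf{1}_{\{\tau_{\ell+1}\leq T_m\}}$ and expand the product, turning each term of the sum into a finite linear combination of quantities of the form $f(\mathbf{I}_{\tau_{\ell_1}^n}^n,\ldots,\mathbf{I}_{\tau_{\ell_N}^n}^n)\prod_m\mathbf{1}_{\{\tau_{j_m}^n\leq T_m\}}$, i.e., of exactly the type whose convergence is asserted by the remark following Corollary~\ref{cor:cv-I-jump_times}. Summing the finitely many limits would then deliver the desired convergence.

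The main obstacle is justifying rigorously this extension of Corollary~\ref{cor:cv-I-jump_times} from product indicators of successive differences $\{\tau_j^n-\tau_{j-1}^n\leq T_j\}$ to indicators $\{\tau_\ell^n\leq T_m\}$ attached to individual jump times. The remark asserts this is routine, but to make it explicit I would re-run the inductive proof of Theorem~\ref{thm:cv-X-jump_times}, starting from the exact formula of Proposition~\ref{prop:coupleN} and applying Assumption~\ref{assumption:ergo} together with the uniform integrability from Assumption~\ref{hyp:b-UI} to pass to the limit in each iterated integral. The absolute continuity of $G$ ensures that the limiting joint law of $(\tau_1,\ldots,\tau_k)$ is absolutely continuous with respect to Lebesgue measure, so the outcome of the limit is insensitive to whether the indicators are written in terms of successive differences or of individual jump times—in both cases one is integrating bounded-variation functions of the $\tau_j$ against a density, with no boundary mass to correct for.
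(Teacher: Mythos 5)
Your proposal is correct and follows essentially the same route as the paper: the paper's proof is exactly the decomposition over $0\leq j_1\leq\cdots\leq j_N\leq k-1$ according to which inter-jump interval each $T_m$ falls into, the replacement of $\mathbf I_{T_m}^n$ by $\mathbf I_{\tau_{j_m}^n}^n$ on that event, and an appeal to Corollary~\ref{cor:cv-I-jump_times} (implicitly in the extended form discussed in the remark following it, since the indicators involve individual jump times rather than successive differences). If anything, you are more explicit than the paper about why that extension is legitimate — namely the absolute continuity of the limiting joint law of the jump times inherited from $G$ — which is the only point the paper leaves to the reader.
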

\begin{proof}
Let $k, N\geq 1$ positive integers, $T_{N}>\ldots>T_1>0$ strictly positive real numbers, $f:I^{N}\rightarrow\mathbb{R}$ a bounded measurable function and $x_i\in E_i$ where $i\in I$, we have 
\begin{align*}
    \espn_{x_i}\big[f\big(\mathbf{I}_{T_1}^n,\ldots,\mathbf{I}_{T_{N}}^n\big)\,\mathbf 1_{\{T_N\leq\tau_{k}^n\}}\big]&=\sum_{j_1=0}^{k-1}\espn_{x_i}\big[f\big(\mathbf I_{T_1}^n,\ldots,\mathbf I_{T_{N}}^n\big)\,\mathbf 1_{\{\tau_{j_1}^n\leq T_1<\tau_{j_1+1}^n\}}\,\mathbf 1_{\{T_N\leq\tau_{k}^n\}}\big]\\
    &=\sum_{j_1=0}^{k-1}\espn_{x_i}\big[f\big(\mathbf I_{\tau_{j_1}^n}^n,\ldots,\mathbf I_{T_{N}}^n\big)\,\mathbf 1_{\{\tau_{j_1}^n\leq T_1<\tau_{j_1+1}^n\}}\,\mathbf 1_{\{T_N\leq\tau_{k}^n\}}\big]
\end{align*}
and so on, we obtain for the previous term
\[
    \sum_{0\leq j_1\leq\ldots\leq j_N\leq k-1}\espn_{x_i}\big[f\big(\mathbf I_{\tau_{j_1}^n}^n,\ldots,\mathbf I_{\tau_{j_N}^n}^n\big)\,\mathbf 1_{\{\tau_{j_1}^n\leq T_1<\tau_{j_1+1}^n\}}\ldots\mathbf 1_{\{\tau_{j_N}^n\leq T_N<\tau_{j_{N}+1}^n\}}\big]
\]
By Corollary~\ref{cor:cv-I-jump_times} when $n$ goes to infinity, the previous term goes to
\[
    \sum_{0\leq j_1\leq\ldots\leq j_N\leq k-1}\mathbf E_i\big[f(\mathbf I_{\tau_{j_1}},\ldots,\mathbf I_{\tau_{j_1}})\,\mathbf 1_{\{\tau_{j_1}\leq T_1<\tau_{j_1+1}\}}\ldots\mathbf 1_{\{\tau_{j_N}\leq T_N<\tau_{j_{N}+1}\}}\big]
\]
which is equal to
\[
    \mathbf E_i\big[f\big(\mathbf{I}_{T_1},\ldots,\mathbf{I}_{T_{N}}\big)\,\mathbf 1_{\{T_N\leq\tau_{k}\}}\big].
\]
So, we have shown that
\[
    \mathbf E_{x_i}^n[f(\mathbf{I}_{T_1}^n,\ldots,\mathbf{I}_{T_N}^n)\,\mathbf 1_{\{T_N\leq\tau_{k}^n\}}]\underset{n\to\infty}{\longrightarrow}\mathbf E_i[f(\mathbf I_{T_1},\ldots,\mathbf I_{T_N})\,\mathbf 1_{\{T_N\leq\tau_{k}\}}],\quad\forall k\geq1
\]
which concludes the proof of the corollary.
\end{proof}

\subsection{Explosion/non-explosion properties}
Since the processes considered in the previous section are defined iteratively, they are only defined up to their explosion time $\tau_\infty^n:=\lim_{k\to\infty} \tau_k^n$. Similarly, the process $\mathbf I$ is defined up to its explosion time $\tau_\infty:=\lim_{k\to\infty} \tau_k$. In this section, we investigate the relation between the explosion time of the accelerated processes and of the limiting process.

\begin{proposition}
\label{prop:explosion}
Let $T>0$ a strictly positive real number, $f:I\to\mathbb R$ a non-negative bounded measurable function and $x_i\in E_i$ where $i\in I$.  Then
\begin{equation}
\label{eq:exploineq}
    \liminf_{n\to\infty}\mathbf E_{x_i}\big[f\big(\mathbf I_T^n\big)\,\mathbf 1_{\{T<\tau_\infty^n\}}\big]\geq\mathbf E_i\big[f\big(\mathbf I_T\big)\,\mathbf 1_{\{T<\tau_\infty\}}\big].
\end{equation}
If in addition $\mathbf I$ is non-explosive under $\mathbf P_i$, i.e. $\mathbf P_i(\tau_\infty=\infty)=1$, then
\[
    \espn_{x_i}\big[f\big(\mathbf I_T^n\big)\,\mathbf 1_{\{T<\tau_\infty^n\}}\big]\underset{n\to\infty}{\longrightarrow}\mathbf E_i\big[f\big(\mathbf I_T\big)\big].
\]
\end{proposition}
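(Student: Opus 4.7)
The plan is to reduce both statements to Corollary~\ref{thm:cv-I-fixed-time} by approximating the event $\{T<\tau_\infty^n\}$ by the countable increasing union
\[
\{T<\tau_\infty^n\}=\bigcup_{k\geq 1}\{T\leq \tau_k^n\},
\]
which holds because $(\tau_k^n)_{k\geq1}$ is strictly increasing with limit $\tau_\infty^n$. The same identity holds for the limit process $\mathbf I$. Corollary~\ref{thm:cv-I-fixed-time} already provides, for each fixed $k\geq 1$, the convergence
\[
\mathbf E^n_{x_i}\big[f(\mathbf I^n_T)\,\mathbf 1_{\{T\leq \tau_k^n\}}\big]\underset{n\to\infty}{\longrightarrow}\mathbf E_i\big[f(\mathbf I_T)\,\mathbf 1_{\{T\leq \tau_k\}}\big].
\]

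For the liminf inequality~\eqref{eq:exploineq}, I would use that $f\geq 0$ together with the inclusion $\{T\leq \tau_k^n\}\subset \{T<\tau_\infty^n\}$, yielding
\[
\liminf_{n\to\infty}\mathbf E^n_{x_i}\big[f(\mathbf I^n_T)\,\mathbf 1_{\{T<\tau_\infty^n\}}\big]\geq \lim_{n\to\infty}\mathbf E^n_{x_i}\big[f(\mathbf I^n_T)\,\mathbf 1_{\{T\leq \tau_k^n\}}\big]=\mathbf E_i\big[f(\mathbf I_T)\,\mathbf 1_{\{T\leq \tau_k\}}\big],
\]
for every $k$. Letting $k\to\infty$, monotone convergence gives $\mathbf E_i[f(\mathbf I_T)\,\mathbf 1_{\{T<\tau_\infty\}}]$ on the right, which is exactly the claimed lower bound.

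For the second part (non-explosive case), I would control the upper bound by splitting
\[
\mathbf E^n_{x_i}\big[f(\mathbf I^n_T)\,\mathbf 1_{\{T<\tau_\infty^n\}}\big]=\mathbf E^n_{x_i}\big[f(\mathbf I^n_T)\,\mathbf 1_{\{T\leq \tau_k^n\}}\big]+R^n_k,
\]
with $R^n_k:=\mathbf E^n_{x_i}[f(\mathbf I^n_T)\,\mathbf 1_{\{\tau_k^n<T<\tau_\infty^n\}}]\leq \|f\|_\infty\,\mathbf P^n_{x_i}(\tau_k^n<T)$. Applying Corollary~\ref{thm:cv-I-fixed-time} with $f\equiv 1$ gives $\mathbf P^n_{x_i}(T\leq \tau_k^n)\to \mathbf P_i(T\leq \tau_k)$, hence $\limsup_n R^n_k\leq \|f\|_\infty\,\mathbf P_i(\tau_k<T)$. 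Combining both terms,
\[
\limsup_{n\to\infty}\mathbf E^n_{x_i}\big[f(\mathbf I^n_T)\,\mathbf 1_{\{T<\tau_\infty^n\}}\big]\leq \mathbf E_i\big[f(\mathbf I_T)\,\mathbf 1_{\{T\leq \tau_k\}}\big]+\|f\|_\infty\,\mathbf P_i(\tau_k<T).
\]
Non-explosion of $\mathbf I$ under $\mathbf P_i$ means $\tau_k\nearrow \infty$ a.s., so $\mathbf P_i(\tau_k<T)\to 0$ while $\mathbf E_i[f(\mathbf I_T)\,\mathbf 1_{\{T\leq \tau_k\}}]\nearrow \mathbf E_i[f(\mathbf I_T)]$. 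This matches the liminf bound obtained above, and yields the announced limit.

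The only real subtlety is the asymmetry between the events $\{T\leq \tau_k^n\}$, for which Corollary~\ref{thm:cv-I-fixed-time} is tailored, and $\{T<\tau_\infty^n\}$, which is a countable increasing union of those. The $\liminf$ direction follows essentially for free by monotonicity and non-negativity; the matching $\limsup$ bound, on the other hand, genuinely requires non-explosion of the limit $\mathbf I$ to absorb the residual probability $\mathbf P_i(\tau_k<T)$ (without non-explosion of $\mathbf I$, the pre-limit processes $\mathbf I^n$ could accumulate jumps just before $T$ in a way that the $\liminf$ bound does not see, so this is where the assumption really enters).
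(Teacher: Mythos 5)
Your proposal is correct and follows essentially the same route as the paper: approximate $\{T<\tau_\infty^n\}$ from inside by the events $\{T\leq \tau_k^n\}$, invoke Corollary~\ref{thm:cv-I-fixed-time} for each fixed $k$, obtain the liminf bound from non-negativity of $f$ and monotone convergence in $k$, and in the non-explosive case absorb the residual term $\|f\|_\infty\,\mathbf P_i(\tau_k<T)\to 0$ to get the matching limsup. Your presentation via direct $\liminf/\limsup$ inequalities is slightly cleaner than the paper's explicit $\varepsilon$--$k_\varepsilon$--$n_\varepsilon$ bookkeeping, but the underlying argument is identical.
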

\begin{proof}
Let $f:I\longrightarrow\mathbb R$ a  bounded $\mathcal I$-measurable function and $T>0$ a strictly positive real number. Without loss of generality we can assume that $f$ is positive. For all $k\geq0$, $i\in I$ and $x_i\in E_i$ we have, according to Corollary~\ref{thm:cv-I-fixed-time},
\[
\espn_{x_i}\big[f\big(\mathbf I_T^n\big)\,\mathbf 1_{\{\tau_k^n>T\}}\big]\underset{n\to\infty}{\longrightarrow}\mathbf E_i\big[f\big(\mathbf I_T\big)\,\mathbf 1_{\{\tau_k>T\}}\big].
\]
Reminding that for all $n\geq1$, the jumps sequence $(\tau_k^n)_{k\geq0}$ is increasing, by the monotone convergence theorem we have both
\[
\espn_{x_i}\big[f\big(\mathbf I_T^n\big)\,\mathbf 1_{\{\tau_k^n>T\}}\big]\underset{k\to\infty}{\longrightarrow}\espn_{x_i}\big[f\big(\mathbf I_T^n\big)\,\mathbf 1_{\{\tau_\infty^n>T\}}\big]\quad\forall n\geq1,
\]
and 
\[
\mathbf E_i\big[f\big(\mathbf I_T\big)\,\mathbf 1_{\{\tau_k>T\}}\big]\underset{k\to\infty}{\longrightarrow}\mathbf E_i\big[f\big(\mathbf I_T\big)\,\mathbf 1_{\{\tau_\infty>T\}}\big].
\]
Then, for all $\varepsilon>0$, there exists $k_\varepsilon\geq0$ such that
\[
\mathbf E_i\big[f\big(\mathbf I_T\big)\,\mathbf 1_{\{\tau_k>T\}}\big]\geq\mathbf E_i\big[f\big(\mathbf I_T\big)\,\mathbf 1_{\{\tau_\infty>T\}}\big]-\varepsilon/2\quad\forall k\geq k_\varepsilon,
\]
and 
\begin{align*}
\espn_{x_i}\big[f\big(\mathbf I_T^n\big)\,\mathbf 1_{\{\tau_{k_\varepsilon}^n>T\}}\big]&\geq\mathbf E_i\big[f\big(\mathbf I_T\big)\,\mathbf 1_{\{\tau_{k_\varepsilon}>T\}}\big]-\varepsilon/2\\
&\geq\mathbf E_i\big[f\big(\mathbf I_T\big)\,\mathbf 1_{\{\tau_\infty>T\}}\big]-\varepsilon.
\end{align*}
Using again that $(\tau_k^n)_{k\geq0}$ is increasing for all $n\geq1$ we have
\begin{align*}
\espn_{x_i}\big[f\big(\mathbf I_T^n\big)\,\mathbf 1_{\{\tau_{\infty}^n>T\}}\big]&\geq\espn_{x_i}\big[f\big(\mathbf I_T^n\big)\,\mathbf 1_{\{\tau_{k_\varepsilon}^n>T\}}\big]\\
&\geq\mathbf E_i\big[f\big(\mathbf I_T\big)\,\mathbf 1_{\{\tau_\infty>T\}}\big]-\varepsilon,
\end{align*}
so
\[
\liminf_{n\to\infty}\espn_{x_i}\big[f\big(\mathbf I_T^n\big)\,\mathbf 1_{\{\tau_{\infty}^n>T\}}\big]\geq\mathbf E_i\big[f\big(\mathbf I_T\big)\,\mathbf 1_{\{\tau_\infty>T\}}\big],
\]
which conclude the proof of~\eqref{eq:exploineq}.

Let us now consider the case where $\mathbf I$ is non-explosive, i.e. $\mathbb P_i(\tau_\infty=\infty)=1$. Let $\varepsilon>0$, there exists $k_\varepsilon$ such that for all $k\geq k_\varepsilon$
\begin{equation}
\label{ineq:proofexplosion}
\Big\lvert\mathbf E_i\big[f(\mathbf I_T)\,\mathbf 1_{\{\tau_k>T\}}-\mathbf E_i\big[f(\mathbf I_T)\,\mathbf 1_{\{\tau_\infty>T\}}\big]\Big\rvert\leq\varepsilon.
\end{equation}
We decompose 
\begin{align*}
\espn_{x_i}\big[f(\mathbf I_T^n)\,\mathbf 1_{\{\tau_\infty^n>T\}}\big]&=\espn_{x_i}\big[f(\mathbf I_T^n)\,\mathbf 1_{\{\tau_k^n>T\}}\big]+\espn_{x_i}\big[f(\mathbf I_T^n)\,\mathbf 1_{\{\tau_\infty^n>T\geq\tau_k^n\}}\big]\\
&\leq\espn_{x_i}\big[f(\mathbf I_T^n)\,\mathbf 1_{\{\tau_k^n>T\}}\big]+\|f\|_\infty\,\mathbf P_{x_i}\big(\tau_k^n\leq T\big).
\end{align*}
Since
\[
\mathbf P_{x_i}^n\big(\tau_k^n>T\big)\underset{n\to\infty}{\longrightarrow}\mathbf P_i\big(\tau_k>T\big)\underset{k\to\infty}{\longrightarrow}1,
\]
increasing $k_\varepsilon$ if necessary, there exists $n_\varepsilon$ such that
\[
\mathbf P_{x_i}^n\big(\tau_{k_\varepsilon}^n>T\big)\geq 1-\varepsilon\quad\forall n\geq n_\varepsilon.
\]
Hence
\[
\Big\lvert\espn_{x_i}\big[f(\mathbf I_T^n)\,\mathbf 1_{\{\tau_\infty^n>T\}}-\espn_{x_i}\big[f(\mathbf I_T^n)\,\mathbf 1_{\{\tau_{k_\varepsilon}^n>T\}}\big]\Big\rvert\leq\|f\|_\infty\,\mathbf P_{x_i}^n\big(\tau_{k_\varepsilon}^n\leq T\big)\leq\|f\|_\infty\,\varepsilon.
\]
and, according to Theorem~\ref{thm:cv-I-fixed-time} and up to a change of $n_\epsilon$,
\[
\Big|\espn_{x_i}\big[f(\mathbf I_T^n)\,\mathbf 1_{\{\tau_{k_\varepsilon}^n>T\}}\big]-\mathbf E_i\big[f(\mathbf I_T)\,\mathbf 1_{\{\tau_{k_\varepsilon}>T\}}\big]\Big|\leq \varepsilon,\ \forall n\geq n_\varepsilon.
\]
By addition and substraction,
\begin{align*}
\Big\lvert\espn_{x_i}\big[f(\mathbf I_T^n)\,\mathbf 1_{\{\tau_\infty^n>T\}}\big]-\mathbf E_i\big[f(\mathbf I_T)\,\mathbf 1_{\{\tau_\infty>T\}}\big]\Big\rvert&\leq\Big\lvert\espn_{x_i}\big[f(\mathbf I_T^n)\,\mathbf 1_{\{\tau_\infty^n>T\}}\big]-\espn_{x_i}\big[f(\mathbf I_T^n)\,\mathbf 1_{\tau_{k_\varepsilon}^n>T\}}\big]\Big\rvert\\
&+\Big\lvert\espn_{x_i}\big[f(\mathbf I_T^n)\,\mathbf 1_{\{\tau_{k_\varepsilon}^n>T\}}\big]-\mathbf E_i\big[f(\mathbf I_T)\,\mathbf 1_{\{\tau_k>T\}}\big]\Big\rvert\\
&+\Big\lvert\espn_{x_i}\big[f(\mathbf I_T)\,\mathbf 1_{\{\tau_{k}>T\}}\big]-\mathbf E_i\big[f(\mathbf I_T)\,\mathbf 1_{\{\tau_\infty>T\}}\big]\Big\rvert
\end{align*}
so, using \eqref{ineq:proofexplosion}, we have
\[]
\limsup_{n\to\infty}\Big\lvert\espn_{x_i}\big[f(\mathbf I_T^n)\,\mathbf 1_{\{\tau_\infty^n>T\}}\big]-\mathbf E_i\big[f(\mathbf I_T)\,\mathbf 1_{\{\tau_\infty>T\}}\big]\Big\rvert\leq\big(2+\|f\|_\infty\big)\,\varepsilon.
\]
Finally 
\[]
\espn_{x_i}\big[f(\mathbf I_T^n)\,\mathbf 1_{\{\tau_\infty^n>T\}}\big]\underset{n\to\infty}{\longrightarrow}\mathbf E_i\big[f(\mathbf I_T)\,\mathbf 1_{\{\tau_\infty>T\}}\big]=\mathbf E_i\big[f(\mathbf I_T)\big].
\]
\end{proof}

\begin{example}
\label{exa:31}
While there are obviously situations where the inequality of the previous result is an equality, this is not, in general, the case. To illustrate this, let us build an example where the inequality is strict for $f\equiv 1$. We consider the situation (see Figure~\ref{fig:exa31} below) where $I=\mathbb N$, $E_i=\{(i,0),(i,1)\}$ for all $i\in \mathbb N$, $b(i,0)=0$ and $b(i,1)=i^2$. In addition we assume that, for all $i\in \mathbb N$, $X^\ori$ starting in $E_i$ is a continuous time process with transition rate $1$ from $(i,0)$ to $(i,1)$ and transition rate $0$ from $(i,1)$ to $(i,0)$. We also consider the transition kernel 
\[
    \pi\big((i,x),(j,y)\big)=\begin{cases}
    1&\text{if }j=i+1\text{ and }y=0,\\
    0&\text{otherwise.}
    \end{cases}
\]
\begin{figure}[H]
  \centering
\begin{tikzpicture}[>=stealth, node distance=3cm and 2.5cm, auto,
  state/.style={circle, draw, fill=gray!20, minimum size=18mm, inner sep=1pt}
]

\node[state] (i0) at (0,-1) {$(0,0)$};
\node[state] (i1) at (0,2) {$(0,1)$};
\node[state] (j0) at (3,-1) {$(1,0)$};
\node[state] (j1) at (3,2) {$(1,1)$};

\node[below=6pt] at (i0.south) {$E_0$};
\node[below=6pt] at (j0.south) {$E_1$};

\draw[->, thick] 
  (i0) -- node[left] {$n$} 
  node[right] {$X^{\mathrm{o},n}$} 
  (i1);
\draw[->, thick] (j0) -- node[left] {$n$} 
  node[right] {$X^{\mathrm{o},n}$} (j1) ;

\draw[->, thick] (i1) -- node[left] {$1\,$} 
  node[right] {$\,\mathbf{X}^n$} (j0);

\node at (5,0.5) {\large $\cdots$};

\node[state] (ei0) at (7,-1) {$(i,0)$};
\node[state] (ei1) at (7,2) {$(i,1)$};
\node[state] (eip10) at (10,-1) {$(i+1,0)$};
\node[state] (eip11) at (10,2) {$(i+1,1)$};

\draw[->, thick] (ei0) -- node[left] {$n$} 
  node[right] {$X^{\mathrm{o},n}$} (ei1);
\draw[->, thick] (eip10) -- node[left] {$n$} 
  node[right] {$X^{\mathrm{o},n}$} (eip11);
\draw[->, thick] (ei1) -- node[left] {$i^2\,$} 
  node[right] {$\,\mathbf{X}^n$} (eip10);

\node[below=6pt] at (ei0.south) {$E_i$};
\node[below=6pt] at (eip10.south) {$E_{i+1}$};

\node at (12,0.5) {\large $\cdots$};
\end{tikzpicture}
\caption{\label{fig:exa31} Graphical representation of the state space of Example~\ref{exa:31}}
\end{figure}
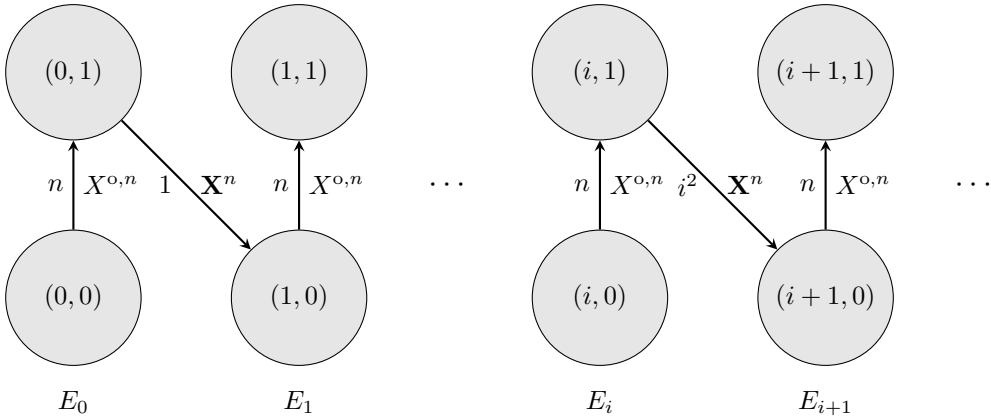

We first observe that, for all $i\in \mathbb N$, the ergodic stationary distribution $\mu_i$ of $X^\ori$ on $E_i$ is such that $\mu_i(i,0)=0$ and $\mu_i(i,1)=1$. As a consequence, the limiting process $\mathbf I$ is a pure jump Markov process on $\mathbb N$ with transition rate $\mu_i(b)=i^2$ from any state $i$ to $i+1$. In particular it is explosive and hence there exists $T>0$ such that 
$\mathbf P(T<\tau_\infty)<1$. However, for any fixed $n$, the process $\mathbf I^n$ is non-explosive (since after each jump from $i$ to $i+1$, it remains an exponential time of parameter $n$ on the state $i$). As a consequence, for all $n\in \mathbb N$, $\mathbf P(T<\tau^n_\infty)=1$. This shows that there is a strict inequality in~\eqref{eq:exploineq}.
\end{example}

\subsection{Convergence in law of the index process  in Skorokhod topology}

In this section, we assume that $I$ is Polish. For all $T > 0$, we let $D([0,T], I)$ denote as usual the space of càdlàg functions defined on $[0,T]$ and with values in $I$. Using the convergence of the finite-dimensional marginals stated in Corollary~\ref{thm:cv-I-fixed-time}, we prove the convergence of $(\mathbf I^n_t)_{t \in[0,T]}$ to $(\mathbf{I}_t)_{t \in[0,T]}$ in law for the Skorokhod topology on $D([0,T], I)$, under mild assumption on the limit process $\mathbf{I}$.

In the following statement, we say that a probability measure $m$ on $I$ is \emph{tight} if, for all $\varepsilon>0$, there exists a compact  $K_\varepsilon\subset I$ such that $m(K)\geq 1-\varepsilon$. In particular, if $I$ is locally compact, all probability measures are tight.
\begin{theorem}
\label{thm:convSkorokhod}
Let $i\in I$ and $T>0$. We assume that $\mathbf I$ is non-explosive up to time $T$ under $\mathbf P_i$, i.e $\mathbf P_i(\tau_\infty>T)=1$, and that, for all $t\in [0,T]$, the law of $\mathbf I_t$ under $\mathbf P_i$ is tight. Then $(\mathbf I^n_t)_{t \in[0,T]}$ converges to $(\mathbf{I}_t)_{t \in[0,T]}$ in law for the Skorokhod topology on $D([0,T], I)$.
\end{theorem}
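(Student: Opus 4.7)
I would follow the standard route of combining convergence of finite-dimensional marginals with tightness in $D([0,T],I)$. For the first ingredient, the argument from the proof of Proposition~\ref{prop:explosion} extends verbatim to several times: given $\eta>0$, non-explosiveness of $\mathbf I$ under $\mathbf P_i$ provides $K=K_\eta$ with $\mathbf P_i(\tau_K>T)\geq 1-\eta$; Corollary~\ref{thm:cv-I-fixed-time} then yields $\mathbf P^n_{x_i}(\tau_K^n>T)\to \mathbf P_i(\tau_K>T)$, and the residual error outside the event $\{T_N\leq \tau_K^n\}$ is bounded by $2\|f\|_\infty\,\mathbf P^n_{x_i}(\tau_K^n\leq T)$, which vanishes as $n\to\infty$. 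This upgrades Corollary~\ref{thm:cv-I-fixed-time} to an unconditional convergence of all finite-dimensional marginals $(\mathbf I^n_{t_1},\ldots,\mathbf I^n_{t_N})\Rightarrow(\mathbf I_{t_1},\ldots,\mathbf I_{t_N})$ at arbitrary $0\leq t_1<\cdots<t_N\leq T$; note that every such time is almost surely a continuity point of $\mathbf I$ since the jump times have an absolutely continuous distribution.

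For tightness, the key observation is that on the event $\{\tau_K^n>T\}$ the path $(\mathbf I^n_t)_{t\in[0,T]}$ is a pure step function with at most $K$ jumps, fully encoded by the random vector
\[
\mathcal D^n:=(\tau_1^n,\ldots,\tau_K^n,\,\mathbf I^n_0,\mathbf I^n_{\tau_1^n},\ldots,\mathbf I^n_{\tau_K^n}),
\]
and similarly $\mathbf I|_{\{\tau_K>T\}}$ is a deterministic function of its analogue $\mathcal D$. Theorem~\ref{thm:cv-X-jump_times} applied with $f=g\circ\phi^{\otimes K}$, combined with the extensions mentioned in the remark following that theorem (to recover joint convergence of times and positions without the indicator truncation), yields the joint convergence in law $\mathcal D^n\Rightarrow\mathcal D$. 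Since $I$ is Polish, Prokhorov's theorem provides, for each $\varepsilon>0$, a compact $\mathsf K_\varepsilon\subset I$ containing $\mathbf I_0,\mathbf I_{\tau_1},\ldots,\mathbf I_{\tau_K}$ simultaneously with probability $\geq 1-\varepsilon$; by the Portmanteau theorem, the same compact contains all $\mathbf I^n_{\tau_k^n}$'s with probability $\geq 1-2\varepsilon$ for $n$ large (with $\mathsf K_\varepsilon$ enlarged by a finite set to handle the remaining small $n$). This gives the compact containment condition, which together with the bounded number of jumps on $\{\tau_K^n>T\}$ ensures tightness in $D([0,T],I)$ via Jakubowski's criterion.

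Finally, to identify the limit I would invoke the continuous mapping theorem applied to the reconstruction map
\[
\Psi_K:(t_1,\ldots,t_K,y_0,\ldots,y_K)\longmapsto \Big(\sum_{k=0}^{K} y_k\,\mathbf 1_{[t_k\wedge T,\,t_{k+1}\wedge T)}(t)\Big)_{t\in[0,T]},
\]
with the convention $t_0=0$, $t_{K+1}=+\infty$, viewed as a map from the ordered simplex $\{0<t_1<\cdots<t_K\}\subset\mathbb R_+^K$ times $I^{K+1}$ into $D([0,T],I)$. A routine check shows that $\Psi_K$ is Skorokhod-continuous at any configuration whose jump times are pairwise distinct and different from $T$; thanks to the absolute continuity of $G$, these conditions are satisfied $\mathbf P_i$-almost surely on $\{\tau_K>T\}$, so $\mathcal D$ lands in the continuity set of $\Psi_K$ with full probability. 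The continuous mapping theorem then gives $\Psi_K(\mathcal D^n)\mathbf 1_{\{\tau_K^n>T\}}\Rightarrow \Psi_K(\mathcal D)\mathbf 1_{\{\tau_K>T\}}$ in $D([0,T],I)$, and letting $\eta\to 0$ (hence $K\to\infty$) concludes the proof. The main obstacle I anticipate lies precisely in the last step, namely handling the Skorokhod (rather than uniform) continuity of $\Psi_K$ on configurations with jump times close to the endpoint $T$, and gluing the convergences on the events $\{\tau_K^n>T\}$ uniformly in $n$ — this is exactly where the assumed tightness of the marginals $\mathbf I_t$ and the non-explosiveness of $\mathbf I$ come together.
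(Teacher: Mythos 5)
Your proposal is correct in substance but takes a genuinely different route from the paper for the second half of the argument. The paper combines the convergence of finite-dimensional marginals (Corollary~\ref{thm:cv-I-fixed-time}) with tightness in $D([0,T],I)$ checked via Theorem~7.2 of \cite{EthierKurtz2005}: the compact containment condition is obtained exactly as in your first paragraph, by intersecting with $\{T\leq \tau_{k_\varepsilon}^n\}$, and the modulus $w'(\mathbf I^n,\delta)$ of a pure jump path with at most $k_\varepsilon$ jumps is controlled by the event that some inter-jump gap is at most $\delta$, whose limiting probability is small because $\int_{0^+}b(X_s^\ori)\,\deriv s<\infty$ forces $\tau_i-\tau_{i-1}>0$ almost surely. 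You instead bypass the abstract tightness criterion by reconstructing the path from the jump skeleton $\mathcal D^n$ and applying the continuous mapping theorem to $\Psi_K$ on $\{\tau_K^n>T\}$; this gives a more self-contained identification of the limit (no separate finite-dimensional step is really needed), at the price of verifying the Skorokhod continuity of $\Psi_K$ and the joint convergence $\mathcal D^n\Rightarrow\mathcal D$, which does follow from Theorem~\ref{thm:cv-X-jump_times} and the remark after Corollary~\ref{cor:cv-I-jump_times} along the lines you sketch. Three small repairs are needed. First, Portmanteau applied to the closed set $\mathsf K_\varepsilon$ gives $\limsup_n\mathbf P(\mathbf I^n_{\tau_k^n}\in\mathsf K_\varepsilon)\leq\mathbf P(\mathbf I_{\tau_k}\in\mathsf K_\varepsilon)$, which is the wrong direction; what you need is that a weakly convergent sequence of laws on the Polish space $I$ is uniformly tight (Prokhorov), which directly yields a single compact working for all $n$. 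Second, $\mathbf P^n_{x_i}(\tau_K^n\leq T)$ does not vanish as $n\to\infty$ for fixed $K$: it converges to $\mathbf P_i(\tau_K\leq T)\leq\eta$, so the error terms must be closed by the double limit $n\to\infty$ then $\eta\to 0$, as in the proof of Proposition~\ref{prop:explosion} (your closing sentence shows you intend this, so it is only a matter of phrasing). Third, since $I$ is not a vector space, expressions like $\sum_k y_k\,\mathbf 1_{[t_k\wedge T,t_{k+1}\wedge T)}$ and $\Psi_K(\mathcal D^n)\,\mathbf 1_{\{\tau_K^n>T\}}$ should be replaced by defining $\Psi_K$ as the piecewise-constant path equal to $y_k$ on $[t_k\wedge T,t_{k+1}\wedge T)$ and by the estimate $\lvert\mathbf E[F(\mathbf I^n)]-\mathbf E[F(\Psi_K(\mathcal D^n))]\rvert\leq 2\|F\|_\infty\,\mathbf P^n_{x_i}(\tau_K^n\leq T)$ for bounded continuous $F$ on $D([0,T],I)$. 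With these adjustments your argument is complete.
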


Before turning to the proof of this result, we recall that if $(\mathbf I^n_t)_{t \in[0,T]}$ converges to $(\mathbf{I}_t)_{t \in[0,T]}$ in law for the Skorokhod topology on $D([0,T], I)$ for all $T>0$, then $(\mathbf I^n_t)_{t \geq 0}$ converges to $(\mathbf{I}_t)_{t \geq 0}$ in law for the Skorokhod topology on $D([0,+\infty), I)$. As a consequence, if $\mathbf I$ is non-explosive, i.e $\mathbf P_i( \tau_{\infty} = + \infty) = 1$ and if  for all $t \geq 0$, the law of $\mathbf I_t$ is tight under $\mathbb P_i$ for all $i \in I$, then  $(\mathbf I^n_t)_{t \geq 0}$ converges to $(\mathbf{I}_t)_{t \geq 0}$ in law for the Skorokhod topology on $D([0,+\infty), I)$.

{\color{black}\begin{proof}[Proof of Theorem~\ref{thm:convSkorokhod}]
According to Corollary~\ref{thm:cv-I-fixed-time}, the finite marginals of the processes $\mathbf I^n$ converge in law, when $n\to+\infty$, to the finite marginals of $\mathbf I$. As a consequence, it is sufficient to prove the tightness of $(\mathbf I^n)_{n\geq 1}$ in the Skorokhod topology on $D([0,T],I)$. In order to so, we make use of Theorem~7.2, p.128 in~\cite{EthierKurtz2005}. This result makes use of the following definition: for all $\delta>0$,
\[
w'(\mathbf I^n,\delta):=\inf_{\{t_i\}}\max_{1\leq i\leq k} w_{\mathbf I^n}[t_{i-1},t_i),
\]
with $w_{\mathbf I^n}[t_{i-1},t_i)$ the modulus of continuity of $I^n$ over $[t_{i-1},t_i)$ and the $\{t_i\}$ range over all partitions of the form $0=t_0<t_1<\cdots <t_{k-1} < T\leq t_k$ with steps  stricktly greater than $\delta$. According to the above cited result, the tightness property is ensured if, for all $\varepsilon>0$ and all $t\in [0,T]$, there exists a compact set $K_{\varepsilon,t}\subset I$ such that
\begin{align}
    \label{eq:compactcontainment}
    \limsup_{n\to+\infty} \mathbf P(\mathbf I^n_t\in K_{\varepsilon,t})\geq 1-\varepsilon
\end{align}
and there exists $\delta>0$ such that
\begin{align}
    \label{eq:propertyiEthierKurtz}
    \limsup_{n\to+\infty} \mathbf P(w'(\mathbb I_n,\delta)>\varepsilon)\leq \varepsilon.
\end{align}

In order to check the first property, choose a compact subset $K_{\varepsilon,t}\subset I$ such that 
\[
\mathbf P(\mathbf I_t\in K_{\varepsilon,t})\geq 1-\varepsilon/2
\]
and $k_{\varepsilon}\geq 1$ 
\[
\mathbf P(T\leq \tau_{k_\varepsilon})\geq 1-\varepsilon/2.
\]
Then, according to Corollary~\ref{thm:cv-I-fixed-time}, 
\[
\lim_{n\to/+\infty} \mathbf P(\mathbf I^n_t\in K_{\varepsilon,t}\text{ and }t\leq \tau_{{k_\varepsilon}}^n)=\mathbf P(\mathbf I_t\in K_{\varepsilon,t}\text{ and }t\leq \tau_{k_\varepsilon})\geq 1-\varepsilon,
\]
which proves~\eqref{eq:compactcontainment}.

Let us now check~\eqref{eq:propertyiEthierKurtz}. By definition of $w'$ and since $\mathbf I^n$ is a pure jump process for any $n\geq 1$, we have
\begin{align*}
\left\{ \omega'(\mathbf I^n, \delta) \geq \varepsilon \right\}
&\subset  
\left\{ \exists i\in \{1,\ldots,k_\varepsilon\} :  
\, \tau_i^n -\tau_{i-1}^n> \delta \right\}\cup\{\tau_{k_\varepsilon}<T\}
\end{align*}
and hence
\begin{align*}
    \mathbf P\left(\omega'(\mathbf I^n, \delta) \geq \varepsilon\right)\leq \mathbf P\left(\exists i\in \{1,\ldots,k_\varepsilon\} :  
\, \tau_i^n -\tau_{i-1}^n> \delta\right)+\varepsilon/2.
\end{align*}
Using Corollary~\ref{cor:cv-I-jump_times}, we deduce that
\begin{align*}
    \limsup_{n\to+\infty} \mathbf P\left(\omega'(\mathbf I^n, \delta) \geq \varepsilon\right)\leq \mathbf P\left(\exists i\in \{1,\ldots,k_\varepsilon\} :  
\, \tau_i-\tau_{i-1}> \delta\right)+\varepsilon/2.
\end{align*}
Since $\int_{0^+} b(X_s^\ori)ds < + \infty$, we have $\tau_i-\tau_{i-1}>0$ almost surely, and hence there exists $\delta>0$ small enough so that $\mathbf P\left(\exists i\in \{1,\ldots,k_\varepsilon\} :  
\, \tau_i-\tau_{i-1}> \delta\right)\leq \varepsilon/2$.
we deduce that~\eqref{eq:propertyiEthierKurtz} holds true which concludes the proof of Theorem~\ref{thm:convSkorokhod}.
\end{proof}}

\begin{remark}
We emphasize that the preceding proof stands correct and may thus be useful in other contexts where one seeks to prove the convergence of pure jump process toward a pure jump process. Indeed, it only uses the pure jump property, the convergence of the jump times and the non-degeneracy of the limit.
\end{remark}

\subsection{Comments on Assumption~\ref{assumption:ergo} and~\ref{hyp:b-UI}: Positive Harris recurrence}
\label{sec:comments-hyp}

First we provide an alternative formulation of Assumption~\ref{hyp:b-UI}, when Assumption~\ref{assumption:ergo} is granted.

\begin{lemma}
\label{lemma:assumptionErgo}
If Assumption~\ref{assumption:ergo} holds true, then Assumption~\ref{hyp:b-UI} is equivalent to the following property: for all $i \in I$,
\[
    b\in \mathcal{A}_i \text{ and }\lim_{t \to \infty} \mathbf{E}_x^\ori\Big[ \frac{1}{t}\int_0^t b(X_s^\ori) \, \deriv s\Big] = \mu_i(b)
\]
\end{lemma}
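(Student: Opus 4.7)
My plan is to reduce both implications to a single Vitali/Scheffé-type principle: a nonnegative sequence that converges almost surely to an integrable limit and whose expectations converge to the expectation of that limit is uniformly integrable, hence converges in $L^1$. Throughout the argument I fix $i\in I$ and $x_i\in E_i$; by \eqref{eq:stable-E-i} the process $X^\ori$ started at $x_i$ remains in $E_i$, and Assumption~\ref{assumption:ergo} identifies $\mu_i$ as the governing ergodic measure.

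For the direction ``property $\Rightarrow$ Assumption~\ref{hyp:b-UI}'', I fix $t>0$ and set $Z_n:=\tfrac{1}{n}\int_0^{nt} b(X_s^\ori)\,\deriv s = t\cdot\tfrac{1}{nt}\int_0^{nt} b(X_s^\ori)\,\deriv s$. Since $b\in\mathcal A_i$, Assumption~\ref{assumption:ergo} applied along $nt\to\infty$ yields $Z_n\to t\mu_i(b)$ $\mathbf P^\ori_{x_i}$-almost surely, while specializing the hypothesised expectation limit to the subsequence $s=nt$ gives $\mathbf E^\ori_{x_i}[Z_n]\to t\mu_i(b)$. The Vitali principle above then produces $L^1$ convergence of $(Z_n)$, and in particular the uniform integrability required by Assumption~\ref{hyp:b-UI}.

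The converse is the more delicate direction, and the main obstacle will be to establish $b\in L^1(\mu_i)$: one cannot apply Assumption~\ref{assumption:ergo} to $b$ before knowing this. My plan is to truncate. For $M>0$, $b_M:=b\wedge M$ is bounded, hence trivially in $\mathcal A_i$, so Assumption~\ref{assumption:ergo} gives $\tfrac{1}{n}\int_0^n b_M(X_s^\ori)\,\deriv s\to\mu_i(b_M)$ almost surely. Letting $M\to\infty$ and invoking monotone convergence both pathwise and under $\mu_i$, I obtain $\mu_i(b)\leq\liminf_n\tfrac{1}{n}\int_0^n b(X_s^\ori)\,\deriv s$ almost surely. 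Taking expectations and Fatou's lemma then give
\[
\mu_i(b)\;\leq\;\liminf_n \mathbf E^\ori_{x_i}\Bigl[\tfrac{1}{n}\int_0^n b(X_s^\ori)\,\deriv s\Bigr],
\]
and this $\liminf$ is finite because the uniform integrability at $t=1$ forces the expectations of a UI family to be bounded. Therefore $\mu_i(b)<\infty$; the path-integrability part of the definition of $\mathcal A_i$ is immediate from the finiteness of $\mathbf E^\ori_{x_i}[\int_0^{T} b(X_s^\ori)\,\deriv s]$ for all $T\geq 0$, again a consequence of Assumption~\ref{hyp:b-UI}. So $b\in\mathcal A_i$.

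Once $b\in\mathcal A_i$ is secured, Assumption~\ref{assumption:ergo} provides the almost sure convergence $\tfrac{1}{s}\int_0^s b(X_u^\ori)\,\deriv u\to\mu_i(b)$ as $s\to\infty$. The elementary domination $\tfrac{1}{s}\int_0^s b(X_u^\ori)\,\deriv u \leq 2\cdot\tfrac{1}{\lceil s\rceil}\int_0^{\lceil s\rceil} b(X_u^\ori)\,\deriv u$, valid for $s\geq 1$, transfers the uniform integrability of the integer family $(\tfrac{1}{n}\int_0^n b(X_u^\ori)\,\deriv u)_{n\geq 1}$ (Assumption~\ref{hyp:b-UI} at $t=1$) to the continuous family indexed by $s\geq 1$. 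Combining this uniform integrability with the almost sure convergence yields $L^1$ convergence and hence the desired limit $\lim_{s\to\infty}\mathbf E^\ori_{x_i}[\tfrac{1}{s}\int_0^s b(X_u^\ori)\,\deriv u]=\mu_i(b)$, concluding the argument.
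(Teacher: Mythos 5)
Your proof is correct and follows essentially the same route as the paper: the reverse implication via the Scheff\'e/Vitali principle, and the forward implication by truncating $b\wedge M$, applying the ergodic theorem to the truncations, and combining Fatou's lemma with the $L^1$-boundedness that uniform integrability provides (the paper phrases this as a proof by contradiction in Lemma~\ref{lem:UI-implies-L1mu}, you argue directly, but the ingredients are identical). Your explicit domination $\frac{1}{s}\int_0^s b(X_u^\ori)\,\deriv u \leq 2\cdot\frac{1}{\lceil s\rceil}\int_0^{\lceil s\rceil} b(X_u^\ori)\,\deriv u$ bridging the integer-indexed family of Assumption~\ref{hyp:b-UI} to the continuous limit $t\to\infty$ is a detail the paper leaves implicit, and it is a welcome addition.
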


The reverse implication is an immediate consequence of Scheffé's lemma. The direct implication is a consequence of the next lemma, where we show that the uniform integrability of the family $(\frac{1}{n}\int_0^n b(X_s^\ori) \, \deriv s)_{n \geq 1}$ implies that the convergence in~\eqref{hyp:ergo} holds true for $b$:
\begin{lemma}
\label{lem:UI-implies-L1mu}
Under  Assumptions~\ref{assumption:ergo} and~\ref{hyp:b-UI}, we have $b \in \mathcal{A}_i$ for all $i \in I$. In particular, for all $x \in E_i$, 
\[
\lim_{t \to \infty} \frac{1}{t} \int_0^t b(X_s^\ori) \deriv s = \mu_i(b), \quad \mathbf{P}_x^\ori - \mbox{almost surely.}
\]
\end{lemma}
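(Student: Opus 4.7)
The plan is to establish $b \in \mathcal{A}_i$ for every $i \in I$, after which the ergodic identity $\lim_{t\to\infty}\tfrac{1}{t}\int_0^t b(X_s^\ori)\,\deriv s = \mu_i(b)$ follows by direct application of Assumption~\ref{assumption:ergo} to $h = b$. Recall that membership in $\mathcal{A}_i$ amounts to two properties: (a) $b \in L^1(\mu_i)$, and (b) $\int_0^t b(X_s^\ori)\,\deriv s < +\infty$ $\mathbf{P}_x^\ori$-almost surely for all $x \in E_i$ and $t \geq 0$.

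Property (b) is essentially free from Assumption~\ref{hyp:b-UI}: uniform integrability implies boundedness in $L^1$, so taking $t = 1$ in the assumption, $\mathbf{E}_x^\ori[\tfrac{1}{n}\int_0^n b(X_s^\ori)\,\deriv s]$ is finite for every $n \geq 1$. Hence $\int_0^n b(X_s^\ori)\,\deriv s < +\infty$ $\mathbf{P}_x^\ori$-a.s.\ for each $n$, and (b) follows for arbitrary $t$ by monotonicity.

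The real content is (a), which I would establish by truncation combined with Fatou's lemma. For $K > 0$ set $b_K := b \wedge K$; since $b_K$ is bounded, it trivially belongs to $\mathcal{A}_i$, so Assumption~\ref{assumption:ergo} applies and gives, for any $x \in E_i$,
\[
\lim_{t\to\infty}\frac{1}{t}\int_0^t b_K(X_s^\ori)\,\deriv s = \mu_i(b_K), \quad \mathbf{P}_x^\ori\text{-a.s.}
\]
Since $b \geq b_K$ pointwise, passing to the liminf in $t$ and then letting $K \to \infty$ by monotone convergence yields
\[
\liminf_{t\to\infty}\frac{1}{t}\int_0^t b(X_s^\ori)\,\deriv s \geq \mu_i(b), \quad \mathbf{P}_x^\ori\text{-a.s.}
\]
Specialising to the integer subsequence $t = n$ and applying Fatou's lemma, together with the $L^1$-boundedness supplied by Assumption~\ref{hyp:b-UI} (at $t = 1$), I would obtain
\[
\mu_i(b) \;\leq\; \mathbf{E}_x^\ori\Big[\liminf_{n\to\infty}\frac{1}{n}\int_0^n b(X_s^\ori)\,\deriv s\Big] \;\leq\; \liminf_{n\to\infty}\mathbf{E}_x^\ori\Big[\frac{1}{n}\int_0^n b(X_s^\ori)\,\deriv s\Big] \;<\; +\infty,
\]
which gives $b \in L^1(\mu_i)$ and thus $b \in \mathcal{A}_i$. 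The ergodic convergence in the statement is then immediate from Assumption~\ref{assumption:ergo}.

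The main obstacle is precisely closing this last chain of inequalities: the truncation argument alone only provides the lower bound $\mu_i(b) \leq \liminf_t \tfrac{1}{t}\int_0^t b(X_s^\ori)\,\deriv s$, and without some $L^1$-control on the ergodic averages this quantity could in principle be $+\infty$. Assumption~\ref{hyp:b-UI} is exactly the minimal ingredient that forces the right-hand side to be finite, thereby upgrading the liminf bound into the required integrability of $b$ against $\mu_i$.
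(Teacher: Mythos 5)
Your proposal is correct and follows essentially the same route as the paper's proof: uniform integrability gives $L^1$-boundedness and hence almost sure finiteness of the path integrals, and the truncation $b\wedge K$ combined with Assumption~\ref{assumption:ergo}, monotone convergence, and Fatou's lemma yields $b\in L^1(\mu_i)$. The only cosmetic difference is that the paper phrases the truncation/Fatou step as a proof by contradiction (assuming $b\notin L^1(\mu_i)$ forces the Cesàro liminf to be $+\infty$), whereas you run the same inequalities directly; the mathematical content is identical.
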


\begin{proof}
Let $x \in E$. Since $(\frac{1}{n}\int_0^n b(X_s^\ori) \, \deriv s)_{n \geq 1}$ is uniformly integrable, it is bounded in $L^1( \mathbf{E}_x^\ori)$. This implies that for all $n \geq 1$, $\int_0^n b(X_s^\ori) \, \deriv s < + \infty$ $\mathbf{P}_x^\ori$ - almost surely and therefore for all $t \geq 0$, $\int_0^t b(X_s^\ori) \, \deriv s < + \infty$. It remains to show that $b \in L^1(\mu_i)$. Assume that this is not the case. Then, for all $N \geq 1$, $b \wedge N \in \mathcal{A}_i$ and thus by~\eqref{hyp:ergo}, if $x \in E_i$ we have $\mathbf{P}_x^\ori$ - almost surely
\begin{align*}
    \liminf_{t \to \infty} \frac{1}{t} \int_0^t b(X_s^\ori) \, \deriv s & \geq \lim_{t \to \infty} \frac{1}{t} \int_0^t b(X_s^\ori) \wedge N \, \deriv s \\
    & \geq \mu_i(b \wedge N)
\end{align*}
This last term goes to infinity as $N \to \infty$, yielding
\[
\liminf_{t \to \infty} \frac{1}{t} \int_0^t b(X_s^\ori) \, \deriv s = + \infty
\]
Now, Fatou's lemma implies
\[
+ \infty = \mathbf{E}_x^\ori\Big( \liminf_{n \to \infty}\frac{1}{n} \int_0^n b(X_s^\ori) \, \deriv s \Big) \leq \liminf_{n \to \infty} \mathbf{E}_x^\ori\Big( \frac{1}{n} \int_0^n b(X_s^\ori) \, \deriv s \Big),
\]
which is in contradiction with the fact that  Since $(\frac{1}{n}\int_0^n b(X_s^\ori) \, \deriv s)_{n \geq 1}$ is bounded in $L^1( \mathbf{E}_x^\ori)$. Hence $b \in L^1(\mu_i)$ and the proof is complete.
\end{proof}

Next, we give some practical condition to check Assumption~\ref{assumption:ergo}.  In the case when $X^\ori$ is a Hunt process on a locally compact secondly countable space, then requiring convergence in \eqref{hyp:ergo} for all $h \in \mathcal{A}_i$ and all $x_i \in E_i$ is equivalent to the positive Harris recurrence of $X^\ori$ on $E_i$ (see~\cite{ADR69}). A sufficient, condition for the positive Harris recurrence is the strong ergodicity of $X^\ori$:
\begin{lemma}
\label{lem:strong-ergodic-implies-Harris}
Assume that for all $i \in I$, $X^0$ is a Markov 
process on $E_i$ and that  there exists a distribution $\mu_i$ on $E_i$ such that for all $x \in E_i$,
\[
\lim_{t \to \infty} \| \mathbf{P}_x^\ori( X_t^\ori \in \cdot) - \mu_i \|_{TV} = 0.
\]
Then $X^\ori$ is positive Harris recurrent on $E_i$, and in particular convergence in~\eqref{hyp:ergo} holds for all $h\in \mathcal A_i$.
\end{lemma}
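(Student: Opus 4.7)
The plan is to deduce the conclusion in four successive reductions: (i) verify that $\mu_i$ is invariant under the semigroup $(P_t)_{t\geq 0}$ of $X^\ori$ on $E_i$; (ii) establish a $0$--$1$ law for shift-invariant events via L\'evy's martingale convergence theorem; (iii) derive Harris recurrence by combining this $0$--$1$ law with a Fubini/dominated-convergence computation on occupation times; and (iv) invoke the classical ergodic theorem \cite{ADR69} to obtain convergence in~\eqref{hyp:ergo} from positive Harris recurrence.

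Fix $i\in I$ and write $P_t f(x):=\mathbf E^\ori_x[f(X^\ori_t)]$. For (i), I would pass to the limit $t\to\infty$ in the semigroup identity $P_{t+s}f(x)=P_t(P_s f)(x)$ for bounded measurable $f$ and $s\geq 0$: the left-hand side converges to $\mu_i(f)$, while the right-hand side converges to $\mu_i(P_s f)$ by the total variation convergence applied to the bounded measurable function $P_s f$, yielding $\mu_i(P_s f)=\mu_i(f)$. For (ii), let $B$ be shift-invariant, i.e.~$\mathbf 1_B=\mathbf 1_B\circ\theta_t$ for every $t\geq 0$. Setting $w(x):=\mathbf P^\ori_x(B)$ and using the Markov property, I would obtain $w=P_t w$ pointwise, so the TV convergence forces $w\equiv c$ for some $c\in[0,1]$. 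On the other hand, shift-invariance combined with the Markov property gives $\mathbf E^\ori_x[\mathbf 1_B\mid\mathcal F^\ori_t]=w(X^\ori_t)=c$ $\mathbf P^\ori_x$-a.s., while L\'evy's martingale convergence theorem gives $\mathbf E^\ori_x[\mathbf 1_B\mid\mathcal F^\ori_t]\to\mathbf 1_B$ $\mathbf P^\ori_x$-a.s.~as $t\to\infty$. Equating the two limits forces $\mathbf 1_B=c$ almost surely, hence $c\in\{0,1\}$.

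For (iii), fix $A\subset E_i$ measurable with $\mu_i(A)>0$ and set $L_\infty:=\int_0^\infty\mathbf 1_A(X^\ori_s)\,\deriv s$. The identity $L_\infty\circ\theta_t=L_\infty-\int_0^t\mathbf 1_A(X^\ori_s)\,\deriv s$ shows that $\{L_\infty<\infty\}$ is shift-invariant; (ii) then yields $\mathbf P^\ori_x(L_\infty<\infty)\in\{0,1\}$. If this probability equalled $1$, then $L_T/T\to 0$ $\mathbf P^\ori_x$-almost surely; combined with $0\leq L_T/T\leq 1$ and dominated convergence, this would give $\mathbf E^\ori_x[L_T]/T\to 0$, contradicting $T^{-1}\mathbf E^\ori_x[L_T]=T^{-1}\int_0^T\mathbf P^\ori_x(X^\ori_s\in A)\,\deriv s\to\mu_i(A)>0$ (Fubini and TV convergence). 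Hence $\mathbf P^\ori_x(L_\infty=\infty)=1$, which together with invariance of $\mu_i$ gives positive Harris recurrence of $X^\ori$ on $E_i$; step (iv) then concludes via~\cite{ADR69}.

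I expect the main obstacle to be step (ii): the $0$--$1$ law for shift-invariant events is the only conceptual ingredient, and it relies crucially on the interplay between the Markov property and the pointwise TV convergence through L\'evy's theorem. The remaining ingredients are essentially bookkeeping (step (i)), a short bounded-convergence argument (step (iii)), or a direct reference (step (iv)).
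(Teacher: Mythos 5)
Your proof is correct, and it shares with the paper's proof the single decisive observation: total variation convergence forces every bounded harmonic function $h=P_th$ to equal $\mu_i(h)$, hence to be constant. Where you diverge is in what you do with that observation. The paper stops there and invokes Theorem~7.1 of \cite{BH22}, which converts ``all bounded harmonic functions are constant'' directly into the ergodic convergence~\eqref{hyp:ergo}; your proposal instead unpacks that black box by hand. You upgrade the constancy of harmonic functions to a $0$--$1$ law for shift-invariant events via L\'evy's martingale convergence theorem, then run the occupation-time argument ($L_\infty\circ\theta_t=L_\infty-\int_0^t\mathbf 1_A(X^\ori_s)\,\deriv s$, bounded convergence against the Ces\`aro limit $T^{-1}\int_0^T\mathbf P^\ori_x(X^\ori_s\in A)\,\deriv s\to\mu_i(A)$) to get Harris recurrence explicitly, and only then cite \cite{ADR69} for the ergodic theorem. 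Your route is more self-contained and has the merit of actually exhibiting the positive Harris recurrence asserted in the statement (which the paper's three-line proof leaves implicit in the citation), at the cost of length and of two mild technical debts: L\'evy's theorem requires the invariant event to be $\mathcal F^\ori_\infty$-measurable in a canonical realization with shifts, and the Harris-to-ergodic-theorem implication is cited in the paper (for \cite{ADR69}) under topological hypotheses (Hunt process on a locally compact second countable space) that the lemma does not formally assume --- the route through \cite{BH22} is what lets the authors sidestep that. Neither point is a genuine gap; both are standard and easily discharged in the applications considered.
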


\begin{proof}
Let $h$ be an harmonic function, that is a bounded measurable function $h : E_i \to \mathbb{R}$ such that $P_t h = h$ for all $t \geq 0$, where 
\[
P_t h(x) = \mathbf{E}_x^\ori\big[ h( X_t^\ori)\big].
\]
Then $h(x) = \lim_{t \to \infty}P_th(x) = \mu_i(h)$, which proves that $h$ is actually constant. Now following the proof of Theorem 7.1 in \cite{BH22}, we get that whenever every bounded harmonic function is constant, the convergence in~\eqref{hyp:ergo} holds true for all $h \in \mathcal{A}_i$.
\end{proof}

\begin{remark}
When $X^\ori$ is a $C_b$ - Feller process,  a sufficient condition for positive Harris recurrence (weaker than the one in the previous lemma) is the existence of an accessible weak Doeblin point (see Proposition 4.8 in \cite{B18}).
Here by $C_b$ - Feller, we mean that for all $t \geq 0$, and all bounded continuous function $h$,  $P_t h$ is also continuous, where $P_t h$ is defined in the proof of the lemma above. The existence of a weak Doeblin point can be checked through weak Hörmander condition in the case of Piecewise Deterministic Markov Processes or of diffusions.
\end{remark}
The fact that $b \in L^1(\mu_i)$ does not ensure that the family in \eqref{hyp:unifintegr} is uniformly integrable - not even that it is finite, as illustrated in Example~\ref{example:bad-Harris} below. A sufficient condition is that  $b$ is sufficiently small with respect to a Lyapunov function:

\begin{lemma}
\label{lem:Lyap-implies-UI}
Assume that that there exist functions $W, \tilde W : E \to \mathbb{R}_+$ and $\hat W : E \to \mathbb{R}$ such that
\begin{enumerate}
    \item For all $x \in E$, the process 
    \[
    W(X_t^\ori) - W(x) - \int_0^t \hat W(X_s^\ori) \, \deriv s
    \]
    is a cadlag local martingale under $\mathbf{P}^\ori_x$;
    \item for some constant $C \geq 0$, $\hat W \leq - \tilde W + C$;
    \item $b^{1 + \varepsilon} \leq K \tilde{W}$ for some $\varepsilon, K > 0$.
\end{enumerate}
Then Assumption~\ref{hyp:b-UI} is satisfied.  
\end{lemma}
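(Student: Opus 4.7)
The plan is to combine the Lyapunov bound with Jensen's inequality to obtain a uniform bound in $L^{1+\varepsilon}$ for the family in \eqref{hyp:unifintegr}, which immediately yields uniform integrability. I would proceed in three steps.

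\emph{Step 1: controlling $\int_0^t \tilde W(X_s^\ori)\,\deriv s$.} Starting from the local martingale property of $W(X_t^\ori) - W(x) - \int_0^t \hat W(X_s^\ori)\,\deriv s$ under $\mathbf P_x^\ori$, I would pick a localizing sequence of stopping times $(T_k)_{k\geq 0}$ and apply the optional stopping theorem to obtain $\mathbf E_x^\ori[W(X_{t\wedge T_k}^\ori)] = W(x) + \mathbf E_x^\ori[\int_0^{t\wedge T_k}\hat W(X_s^\ori)\,\deriv s]$. Using $\hat W\leq -\tilde W+C$ and the non-negativity of $W$, this rearranges into $\mathbf E_x^\ori[\int_0^{t\wedge T_k}\tilde W(X_s^\ori)\,\deriv s]\leq W(x)+Ct$. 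The monotone convergence theorem, valid since $\tilde W\geq 0$, then lets me send $k\to\infty$ to get
\[
\mathbf E_x^\ori\Big[\int_0^{t}\tilde W(X_s^\ori)\,\deriv s\Big]\leq W(x)+Ct\quad\text{for all } t\geq 0.
\]

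\emph{Step 2: bounding $L^{1+\varepsilon}$ moments of the normalized time integral of $b$.} By Jensen's inequality applied to the probability measure $\frac{\deriv s}{nt}$ on $[0,nt]$, the convexity of $y\mapsto y^{1+\varepsilon}$ and the assumption $b^{1+\varepsilon}\leq K\tilde W$ give
\[
\Big(\frac{1}{nt}\int_0^{nt}b(X_s^\ori)\,\deriv s\Big)^{1+\varepsilon}\leq \frac{1}{nt}\int_0^{nt}b(X_s^\ori)^{1+\varepsilon}\,\deriv s\leq \frac{K}{nt}\int_0^{nt}\tilde W(X_s^\ori)\,\deriv s.
\]
Multiplying both sides by $t^{1+\varepsilon}$, taking expectations under $\mathbf P_x^\ori$ and plugging in the bound from Step~1 applied at time $nt$ yields
\[
\mathbf E_x^\ori\Big[\Big(\frac{1}{n}\int_0^{nt}b(X_s^\ori)\,\deriv s\Big)^{1+\varepsilon}\Big]\leq \frac{Kt^\varepsilon}{n}\bigl(W(x)+Cnt\bigr)=\frac{Kt^\varepsilon W(x)}{n}+KCt^{1+\varepsilon},
\]
which is bounded uniformly in $n\geq 1$.

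\emph{Step 3: conclusion.} Since any family bounded in $L^{1+\varepsilon}(\mathbf P_x^\ori)$ is uniformly integrable in $L^1(\mathbf P_x^\ori)$ (by the de la Vallée-Poussin criterion, or equivalently because $\mathbf E[|Z_n|\mathbf 1_{|Z_n|>M}]\leq M^{-\varepsilon}\sup_n\mathbf E[|Z_n|^{1+\varepsilon}]\to 0$ as $M\to\infty$), the previous bound gives exactly the uniform integrability required by Assumption~\ref{hyp:b-UI}. The only delicate point I anticipate is Step~1, where care is needed to justify the optional stopping argument for a local (rather than true) martingale and to pass to the limit along the localizing sequence; the rest is essentially a Jensen computation.
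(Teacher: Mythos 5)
Your proof is correct and follows essentially the same route as the paper: the Lyapunov bound $\mathbf E_x^\ori[\int_0^t \tilde W(X_s^\ori)\,\deriv s]\leq W(x)+Ct$, then Jensen with $b^{1+\varepsilon}\leq K\tilde W$ to get a uniform $L^{1+\varepsilon}$ bound, then de la Vallée-Poussin. The only difference is that you derive the Lyapunov estimate from scratch via localization and optional stopping (correctly), where the paper simply cites Theorem 2.2 of \cite{B18}; you are also slightly more explicit than the paper in tracking the general time horizon $nt$ rather than just $n$.
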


\begin{proof}
  By points 1. and 2., we have that for all $t \geq 0$, (see e.g. \cite{B18}, Theorem 2.2)
\[
\int_0^t \mathbf{E}^\ori_x \big[ \tilde W(X_s^\ori)\big] \, \deriv s \leq W(x) + C t.
\]
Hence, by point 3. and Jensen inequality, we have
\begin{align*}
    \mathbf{E}_x^\ori \Big[ \Big( \frac{1}{t} \int_0^t b(X_s^\ori) \, \deriv s \Big)^{1 + \varepsilon} \Big] & \leq \mathbf{E}_x^\ori \Big[ \frac{1}{t} \int_0^t b^{1 + \varepsilon} (X_s^\ori) \, \deriv s\Big]\\
    & \leq \frac{1}{t} \int_0^t  \mathbf{E}_x^\ori \big[   \tilde W (X_s^\ori) \, \deriv s\big]\\
    & \leq \frac{W(x)}{t} + C.
\end{align*}
This entails that the family $\big( \frac{1}{n}\int_0^n b(X_s^\ori) \, \deriv s\big)_{n \geq 1}$ is bounded in $L^{1+\varepsilon}(\mathbf{E}_x^\ori)$, hence is uniformly integrable.
\end{proof}
Finally, let us comment on why the convergence in~\eqref{hyp:ergo} is only asked for all functions $h \in \mathcal{A}_i$ and not for all $h \in L^1(\mu_i)$. This is because the set $E_i$ could contains some transient subspace $F_i$, which is therefore of $\mu_i$ measure $0$. Hence, the fact that $h \in L^1(\mu_i)$ does not provide any information on $h(X_s^\ori)$ for starting point $x \in F_i$ and time $s$ smaller than the exit time of $F_i$. The following example illustrate this issue:

\begin{example}
\label{example:bad-Harris}
Consider the set $E = \mathbb{R}_+^* \times \{1,2\}$ and the process $(X,U)$ on $E$, where $U$ is a continuous-time Markov chain on $\{1,2\}$ with transition rates $1$, and $X$ evolves according to
\[
\frac{d X_t}{dt} = F^{U_t}(X_t),
\]
where
\[
F^1(x) = x(1- x), \quad F^2(x) = x(2-x).
\]
It is easily proven that for all starting point $(x,u) \in E$, $X_t$ will enter in finite time in the set $[1,2]$ and then stay there for all subsequent times. The set $F =(0,1) \times \{1,2\} \cup (2, +\infty) \times \{1, 2\}$ is therefore transient. In addition, it can be proven that $(X,U)$ admits a unique stationary $\mu$, necessarily supported by $[1,2] \times \{1, 2\}$. In addition, the law of $(X_t, U_t)$ converges in total variation to $\mu$, which implies that $(X,U)$ is positive Harris recurrent by Lemma~\ref{lem:strong-ergodic-implies-Harris}. Consider now the function $h : E \to \mathbb{R}$ given by $h(x,u) = \frac{1}{2x-1}$ if $x \neq 1/2$ and $h(1/2,u) = 0$. Then $h$ is bounded on $[1,2] \times \{1,2\}$ and therefore integrable with respect to $\mu$. On the other hand, if $x \in (0,1/2)$, then $X$ will grow at least linearly at speed at least $x > 0$ to $1$, so that for all $t$ large enough, 
\[
\int_0^t h(X_s, U_s) \, \deriv s = +\infty,
\]
which implies that $\frac{1}{t} \int_0^t h(X_s) \, \deriv s$ does not converge to $\mu(h)$.
\end{example}

\section{Applications}

\label{sec:appli}

\subsection{Branching processes}

We consider a branching process where each individual carries an internal stochastic trait evolving dynamically in time. More precisely, each individual is endowed with a c\'adl\'ag Markov process $(Y_t)_{t \geq 0}$ on a Polish type space $D$, which describes the evolution of its intrinsic state (for instance, a physiological characteristic, viral charge, or environmental marker). This trait influences both the individual’s lifetime and its reproduction mechanism. We assume that the process $Y$ admits a unique ergodic stationary distribution $\chi$, ensuring that long-time averages along the trajectory of $Y$ are well defined and non-degenerate. In addition, the rate at which an individual reproduces depends on the current state of its trait through a measurable function $\beta : D \to \mathbb R+$, and the offspring distribution is described by a reproduction kernel $K$ from $D$ to the space of offspring traits. The resulting population system is thus an interacting collection of Markovian individuals, each following its own trait dynamics in continuous time, and undergoing branching at state-dependent rates. Such \textit{branching processes} have been extensively studied in the past decades, see e.g.~\cite{harris1963theory,MBP2,horton2023stochastic}.

More formally, we assume that we are given: 
\begin{itemize}
    \item The individual dynamic $(Y_t)_{t \geq 0}$: a Markov process on a measurable space $D$. We assume that $Y$ admits a unique ergodic stationary distribution $\chi$ such that for all $y \in D$, 
\[
\lim_{t \to \infty} \| \mathbf{P}_y( Y_t \in \cdot) - \chi \|_{TV} = 0.
\]
    \item The individual branching rate $\beta : D \to \mathbb{R}_+$: a measurable function such that   for all $y \in D$, the family $(\frac{1}{n}\int_0^n \beta(Y_s) \, \deriv s)_{n \geq 1}$ is uniformly integrable.
    \item The individual reproduction kernel $K$: a transition kernel from $D$ to $\{ \Delta\}\cup \cup_{i\geq 1} D^i$, where $\Delta$ is a cemetery point corresponding to an empty progeny.
\end{itemize}

\begin{remark}
We assume that the law of $Y$ converges in total variation, which is stronger than the positive Harris recurrence, see Lemma~\ref{lem:strong-ergodic-implies-Harris}. This stronger assumption is useful because it can be tensorized: if two independent processes converge in total variation norm, then the couple converges in total variation norm (see Lemma~\ref{lem:tensorisation-cv-VT}), while this is not the case for independent positive Harris recurrent processes. Note that, as proved in Theorem 6.1 in \cite{meyn1993stability}, a continuous time positive Harris recurrent converges in total variation norm if and only if some skeleton chain is irreducible.
\end{remark}

Let $I = \mathbb{N}=\{0,1,\ldots\}$, $E_0 = \{ \Delta \}$, and for $i \geq 1$, $E_i = D^i$ and $E=\cup_{i\in\mathbb N}E_i$. For $x = (x_1, \ldots, x_i) \in E_i$,  let $((Y_t^k)_{t \geq 0})_{1 \leq k \leq i}$ be a family of independent copies of $Y$ such that $Y_0^k = x_k$. Then, we consider a process $X^\ori$ such that 
\[
\mathbf{P}_{(x_1, \ldots, x_i)}^\ori\big(  X_t^\ori = ( Y_t^1, \ldots, Y_t^i), \quad \forall t \geq 0\big) = 1.
\]
We also let $\mu_i = \chi^{\otimes i}$ and $b : E \to \mathbb{R}$ defined by its projection on $E_i$: $b_0=0$ and
\[
b_i(x_1, \ldots, x_i) = \sum_{k=1}^i \beta(x_k).
\]
Finally, set $\pi(\Delta,E)=0$ and
\[
\pi((x_1, \ldots, x_i), dy) = \sum_{k=1}^i \frac{\beta(x_k)}{b_i(x_1, \ldots, x_i)} \big(\delta_{x_1} \otimes \cdots \otimes K(x_k,\, \cdot\,) \otimes \delta_{x_{k+1}} \otimes \cdots \otimes \delta_{x_i}\big)(dy),
\]
and $G(t) = 1 - e^{-t}$. We construct as in Sections~\ref{sec:base} and~\ref{sec:main-results} the process $\mathbf X^n=(\mathbf X_t^n)_{t\geq0}$ by piecing-out with dynamic $X^{\ori,n}$, jump rate $b$ and jump kernel $\pi$. The index process $\mathbf I^n=(\mathbf I_t^n)_{t\geq0}$ by projection of $\mathbf X^n$ on $\mathbb N$. Then, $\mathbf I^n$ is a process taking values in $\mathbb N$, representing the number of particles alive at time $t \geq 0$ (cells, individuals, infective people...). 
\begin{proposition}
\label{prop:branching}
The sequence of processes $(\mathbf{I}^n)_{n\geq 1}$ converges in law for the Skorokhod topology on $D([0,+\infty),\mathbb N)$ to 
a standard continuous-time Galton-Watson tree with reproduction/branching rate $\chi(\beta)$ and offspring distribution \[L = \bigg(\frac{\chi(\beta K( \cdot, D^j))}{\chi(\beta)}\bigg)_{j \in \mathbb N}.\]
 \end{proposition}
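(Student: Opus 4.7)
The plan is to cast the branching process into the general framework of Sections~\ref{sec:base}--\ref{sec:main-results} and then invoke Theorem~\ref{thm:convSkorokhod}. Set $I=\mathbb{N}$ (which is Polish and locally compact in the discrete topology), $E_0=\{\Delta\}$, $E_i=D^i$ for $i\geq 1$, and define $X^\ori$ on $E_i$ as $i$ independent copies of $Y$. The stability property~\eqref{eq:stable-E-i} is immediate and the choice $G(t)=1-e^{-t}$ produces exponential inter-jump times at rate $b$.

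For Assumption~\ref{assumption:ergo}, the hypothesis $\|\mathbf P_y(Y_t\in\cdot)-\chi\|_{TV}\to 0$ combined with the tensorization lemma~\ref{lem:tensorisation-cv-VT} yields total-variation convergence of the law of $X_t^\ori$ on $E_i$ towards $\mu_i:=\chi^{\otimes i}$, so Lemma~\ref{lem:strong-ergodic-implies-Harris} delivers the ergodic identity~\eqref{hyp:ergo} for every $h\in\mathcal A_i$. For Assumption~\ref{hyp:b-UI}, writing
\[
\frac{1}{n}\int_0^{nt}b_i(X_s^\ori)\,\deriv s=\sum_{k=1}^i\frac{1}{n}\int_0^{nt}\beta(Y_s^k)\,\deriv s
\]
exhibits the left-hand side as a finite sum of families which are uniformly integrable by assumption, hence UI.

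Next I would identify the limiting dynamics. A direct computation gives $\mu_i(b)=i\chi(\beta)$, so $\mathbf I$ jumps from state $i$ at rate $i\chi(\beta)$. To extract the post-jump law from~\eqref{eq:jumpMarkov}, I would unpack $\pi$: the $k$-th summand of $\pi((x_1,\ldots,x_i),\cdot)$ places mass $K(x_k,D^{j-i+1})$ on $E_j$ (with the convention $D^0=\{\Delta\}$) when $j\geq i-1$, and zero otherwise. Using the permutation symmetry of $\mu_i=\chi^{\otimes i}$ and the cancellation of $b_i(x)$ in the numerator and denominator, the $i$ summands in the resulting integral collapse to a single one-dimensional integral against $\chi$, producing
\[
P(i,\{j\})=\frac{\chi\big(\beta\,K(\cdot,D^{j-i+1})\big)}{\chi(\beta)}=L_{j-i+1}.
\]
Combined with the jump rate $i\chi(\beta)$, this identifies $\mathbf I$ under $\mathbf P_i$ with the population size of a continuous-time Galton--Watson tree starting with $i$ individuals, each reproducing at rate $\chi(\beta)$ with offspring law $L$.

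Finally, I would apply Theorem~\ref{thm:convSkorokhod}. Tightness of the marginals of $\mathbf I$ is automatic since $\mathbb N$ is discrete. The only genuine obstacle is non-explosion of $\mathbf I$: granted that $L$ has finite mean $\bar L<\infty$ (implicit in the statement of the proposition), the classical identity $\mathbf E_i[\mathbf I_t]=i\exp\big(\chi(\beta)(\bar L-1)t\big)$ for continuous-time Galton--Watson trees yields $\mathbf P_i(\tau_\infty=+\infty)=1$. Theorem~\ref{thm:convSkorokhod} and the remark following it then deliver convergence of $\mathbf I^n$ to $\mathbf I$ in the Skorokhod topology on $D([0,+\infty),\mathbb N)$. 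The main care-demanding step is the symmetry-based collapse of the kernel integral; all remaining verifications are routine.
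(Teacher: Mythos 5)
Your proposal is correct and follows the same overall route as the paper: verify Assumptions~\ref{assumption:ergo} and~\ref{hyp:b-UI} via Lemmas~\ref{lem:tensorisation-cv-VT} and~\ref{lem:strong-ergodic-implies-Harris} and the finite-sum decomposition of $b_i$, compute $\mu_i(b)=i\chi(\beta)$ and the post-jump law from~\eqref{eq:jumpMarkov}, and conclude with Theorem~\ref{thm:convSkorokhod}. Two points where you diverge usefully from the paper's proof. First, your bookkeeping $P(i,\{j\})=L_{j-i+1}$ (new population $=$ old population $-1+$ offspring number) is the correct transition law of the index chain; the paper writes $P(i,j)=\chi(\beta K(\cdot,D^j))/\chi(\beta)$, implicitly reinterpreting $j$ as the offspring count rather than the post-jump state, so your version is the more careful one. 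Second, you explicitly verify the non-explosion hypothesis of Theorem~\ref{thm:convSkorokhod}, which the paper's proof silently skips even though the theorem requires $\mathbf P_i(\tau_\infty>T)=1$; your sufficient condition is sound (finite offspring mean $\bar L$ forces $\mathbf I_{\tau_k}$ to grow at most linearly in $k$, so $\sum_k(\mathbf I_{\tau_k}\chi(\beta))^{-1}=\infty$ and the process cannot explode), but note that $\bar L<\infty$ is an \emph{added} hypothesis not present in the statement of the proposition, and without some such condition the claimed Skorokhod convergence on $D([0,+\infty),\mathbb N)$ is not justified by Theorem~\ref{thm:convSkorokhod} (compare Example~\ref{exa:31}, where explosion of the limit breaks the analogous conclusion). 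So your proof is, if anything, more complete than the paper's, at the cost of an extra standing assumption that you should state explicitly rather than call implicit.
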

  
\begin{proof}

Using  Lemma \ref{lem:tensorisation-cv-VT} and Lemma~\ref{lem:strong-ergodic-implies-Harris} and the hypothesis on the individual dynamic, for all $i\in\mathbb N$, the process $((Y_t^1,\ldots, Y_t^i))_{t\geq0}$ is positive Harris recurrent and for all $f \in \mathcal{A}_i$ and all $x_i \in E_i$
\[
    \frac{1}{t}\int_0^tf(X_s^\ori)\,\deriv s=\frac{1}{t}\int_0^tf(Y_s^1,\ldots,Y_s^i)\,\deriv s\underset{t\to\infty}{\longrightarrow}\mu_i(f)\quad\mathbf P_{x_i}\mathrm{-p.s.}
\]
In addition, by definition of $b$ and the assumption on the individual branching rate $\beta$, the family $(\frac{1}{n}\int_0^n b(X_s^\ori) ds)_{n \geq 1}$ is uniformly integrable. Therefore, Assumptions~\ref{assumption:ergo} and~\ref{hyp:b-UI} are satisfied.

By Theorem~\ref{thm:convSkorokhod},  $\mathbf I^n$  converges in the Skorokhod topology of $D([0,+\infty),\mathbb N)$ to $\mathbf I$, where, for all $i\in\mathbb N$, under $\mathbf P_i$,  $(\mathbf I_t)_{t\geq 0}$ is an $\mathbb N$-valued pure jump Markov process with $\mathbf P_i(\mathbf I_0=i)=1$, such that its  first jump time $\tau_1$ satisfies for all $t\geq0$
\[
\mathbf P_i(\tau_1\leq t)=1-e^{-t\mu_i(b_i)}=1-e^{-t\,i\chi(\beta)}
\]
and such that its jumps transitions probabilities $(P(i,j))_{i\geq 1,j\in\mathbb N}$ are given, for all $j\in\mathbb N$ by
\begin{align*}
\label{eq:jumpMarkov}
P(i,j)&=\int_E\frac{\mu_i(\deriv x)}{\mu_i(b_i)}\,b_i(x)\,\nu(x,j)\\
&=\int_E\frac{\mu_i(\deriv x)}{i\chi(\beta)}\,\sum_{k=1}^i\beta(x_k)\,K(x_k,D^j)\\
&=\frac{\chi\big(\beta\,K(\,\cdot\,,D^j)\big)}{\chi(\beta)}
\end{align*}
where $x=(x_1,\ldots,x_i)\in D^i$. Then $\mathbf I$ is a branching process where each individual branches at rate $\chi(\beta)$, and at a branching event, gives birth and is replaced by $j\geq0$ individuals with probability 
\[
\frac{\chi\big(\beta\,K(\,\cdot\,,D^j)\big)}{\chi(\beta)}.
\]
This concludes the proof of Proposition~\ref{prop:branching}.
\end{proof}
  
  \begin{remark}
  Note that $r_j(x):=\beta(x) K(x, D^j)$ is the rate at which an individual with "characteristic" $x$ is replaced by $j$ individuals. In the limiting process, the rate at which an individual is replaced by $j$ individuals is $\chi(r_j)$.
  \end{remark}
 
 \begin{example}
Consider the case of a binary  branching process, i.e., a branching event correspond to a division or a death. If $r$ and $q$ denotes respectively the division and the death rates, then $\beta = r + q$ is the branching rate. Setting
\[
\bar r = \chi(r), \quad \bar q = \chi(q),
\]
the index process $\mathbf{I}$ is a linear birth and death process with individual birth rate $ \bar r$ and individual death rate $ \bar q$. Hence, it is non-explosive, converges to $0$ is $\bar r \leq \bar q$, while if $\bar r > \bar q$, 
 \[
 \mathbf P\big( \lim_{t \to \infty} \mathbf{I}_t = 0\big) = 1 - \mathbf P\big( \lim_{t \to \infty} \mathbf{I}_t = + \infty \big) = \frac{\bar q}{\bar r}.
 \]
\end{example}

\begin{example}
More generally, let $m = \sum_j j \chi(r_j)$ be the mean number of offsprings. It is well known that, if $m \leq 1$, $\mathbf{I}_t$ converges to $0$ with probability $1$, while if $m > 1$ this probability is strictly less than one, and on the complementary event, it goes to infinity as $t\to+\infty$. This provides information on the limiting behaviour of $\mathbf P(\lim_{t\to+\infty} \mathbf I_t^n=0)$ through the relation
\begin{align*}
    \liminf_{n\to+\infty} \mathbf P\big(\lim_{t\to+\infty} \mathbf I_t^n=0\big)\geq \mathbf P(\lim_{t\to+\infty} \mathbf I_t=0),
\end{align*}
which is obtained similarly as in the proof of Proposition~\ref{prop:explosion}.
However, there is no a priori general systematic relation between $\mathbf P(\liminf_{t\to+\infty} \mathbf I_t^n=+\infty)$ and $\mathbf P(\liminf_{t\to+\infty} \mathbf I_t=+\infty)$. 
\end{example}

\subsection{Generalized contact process}

We consider a contact process with individual stochastic viral load on a graph $G=(E_G,V_G)$, assumed to be of finite maximal degree. Each vertices $k\in V_G$ represents an individual, and we set $i_k=0$ if the individual is susceptible and $i_k=1$ if the individual is infected. Each infected individual has a viral load $x_k\in D$, where $D$ is a Polish set, which influences its rate of healing and its rate of infection of susceptible neighbours. The viral charge of one individual follows a stochastic autonomous dynamic in between infection/healing events. We denote by $I:=\{0,1\}^{(V_G)}$ the space of susceptible/infected configurations, with only finitely many non-zero coordinates.

More precisely, we assume that we are given an initial configuration with only finitely many infected individuals, and
\begin{itemize}
    \item the infection rate $\kappa_{0\to 1}:D\to \mathbb R_+$: an individual $k\in V_G$ who is susceptible (i.e. such that $i_k=0$) is infected independently by each of its infected neighbour $\ell$ at rate $\kappa_{0\to 1}(x_\ell)$; when infected by a neighbour $\ell$, the viral load of $k$ is a random variable distributed according to $p(x_\ell,\cdot)$, where $p$ is a transition kernel from $D$ to $D$; see Figure~\ref{fig:infection};
    \item the healing rate $\kappa_{1\to 0}:D\to\mathbb R_+$: an individual $k\in V_G$ who is infected (i.e. such that $i_k=1$) heals at rate $\kappa_{1\to 0}(x_k)$;
    \item the viral load Markov process $Y$: the viral load $x_k$  of an individual $k\in V_G$ who is infected (i.e. such that $i_k=1$) evolves according to the dynamics of $Y$  independently of the rest of the system up to the healing time of $k$.
\end{itemize}
A similar modelling approach, where each site carries a real-valued process to represent the level of knowledge or information, has been recently introduced in the context of interacting particle systems and knowledge diffusion models in \cite{ContactProcessWithStates}.
\begin{figure}[H]
  \centering
\begin{tikzpicture}[scale=0.84, every node/.style={font=\normalsize}, >=stealth, node distance=3cm and 2.5cm, auto,
  state/.style={circle, draw, fill=gray!20, minimum size=9mm, inner sep=1.5pt}
]

  \foreach \x in {0,1,2}{
    \foreach \y in {0,1,2}{
      \ifnum\x=1
        \ifnum\y=1
          \node[state] (L\x\y) at ({2*\x},{-2*\y}) {0};
        \else
          \ifnum\y=0
            \node[state] (L\x\y) at ({2*\x},{-2*\y}) {$1$};
          \else
            \node[state] (L\x\y) at ({2*\x},{-2*\y}) {$1$};
          \fi
        \fi
      \else
        \node[state] (L\x\y) at ({2*\x},{-2*\y}) {0};
      \fi
    }
  }

  \foreach \x in {0,1}{
    \foreach \y in {0,1,2}{
      \pgfmathtruncatemacro{\xp}{\x+1}
      \draw (L\x\y) -- (L\xp\y);
    }
  }
  \foreach \x in {0,1,2}{
    \foreach \y in {0,1}{
      \pgfmathtruncatemacro{\yp}{\y+1}
      \draw (L\x\y) -- (L\x\yp);
    }
  }

  \foreach \y in {0,1,2}{
    \draw[dotted, thick] (L2\y) -- ++(1,0);
  }
  \foreach \y in {0,1,2}{
    \draw[dotted, thick] (L0\y) -- ++(-1,0);
  }
  \foreach \x in {0,1,2}{
    \draw[dotted, thick] (L\x2) -- ++(0,-1);
  }
  \foreach \x in {0,1,2}{
    \draw[dotted, thick] (L\x0) -- ++(0,1);
  }
  \draw[dotted, thick] (L22) -- ++(1,0);
  \draw[dotted, thick] (L22) -- ++(0,-1);
  \draw[dotted, thick] (L02) -- ++(-1,0);
  \draw[dotted, thick] (L02) -- ++(0,-1);
  \draw[dotted, thick] (L20) -- ++(1,0);
  \draw[dotted, thick] (L20) -- ++(0,1);
  \draw[dotted, thick] (L00) -- ++(-1,0);
  \draw[dotted, thick] (L00) -- ++(0,1);

\draw[->, thick] (6,-2) --  node[above] {$q\Big(Y_t^{(0,1)},\,\cdot\,\Big)+q\Big(Y_t^{(0,-1)},\,\cdot\,\Big)$}node[below] {$\kappa_{0\to1}\Big(Y_t^{(0,1)}\Big)+\kappa_{0\to1}\Big(Y_t^{(0,-1)}\Big)$} (9,-2);

  \foreach \x in {0,1,2}{
    \foreach \y in {0,1,2}{
      \ifnum\x=1
        \ifnum\y=1
          \node[state] (R\x\y) at ({11+2*\x},{-2*\y}) {$1$};
        \else
          \ifnum\y=0
            \node[state] (R\x\y) at ({11+2*\x},{-2*\y}) {$1$};
          \else
            \node[state] (R\x\y) at ({11+2*\x},{-2*\y}) {$1$};
          \fi
        \fi
      \else
        \node[state] (R\x\y) at ({11+2*\x},{-2*\y}) {0};
      \fi
    }
  }

  \foreach \x in {0,1}{
    \foreach \y in {0,1,2}{
      \pgfmathtruncatemacro{\xp}{\x+1}
      \draw (R\x\y) -- (R\xp\y);
    }
  }
  \foreach \x in {0,1,2}{
    \foreach \y in {0,1}{
      \pgfmathtruncatemacro{\yp}{\y+1}
      \draw (R\x\y) -- (R\x\yp);
    }
  }

  \foreach \y in {0,1,2}{
    \draw[dotted, thick] (R2\y) -- ++(1,0);
  }
  \foreach \y in {0,1,2}{
    \draw[dotted, thick] (R0\y) -- ++(-1,0);
  }
  \foreach \x in {0,1,2}{
    \draw[dotted, thick] (R\x2) -- ++(0,-1);
  }
  \foreach \x in {0,1,2}{
    \draw[dotted, thick] (R\x0) -- ++(0,1);
  }
  \draw[dotted, thick] (R22) -- ++(1,0);
  \draw[dotted, thick] (R22) -- ++(0,-1);
  \draw[dotted, thick] (R02) -- ++(-1,0);
  \draw[dotted, thick] (R02) -- ++(0,-1);
  \draw[dotted, thick] (R20) -- ++(1,0);
  \draw[dotted, thick] (R20) -- ++(0,1);
  \draw[dotted, thick] (R00) -- ++(-1,0);
  \draw[dotted, thick] (R00) -- ++(0,1);

\end{tikzpicture}
\caption{\label{fig:infection} Infection rates and distribution of viral load of a newly infected individual, with the notation $q(x_\ell,\cdot)=\frac{\kappa_{0\to 1}(x_\ell)}{\sum_{j\sim k}i_j\,\kappa_{0 \to 1}(x_{j})}\,p(x_\ell,\,\cdot\,)$.  The individual in the center is numbered $(0,0)$.}
\end{figure}
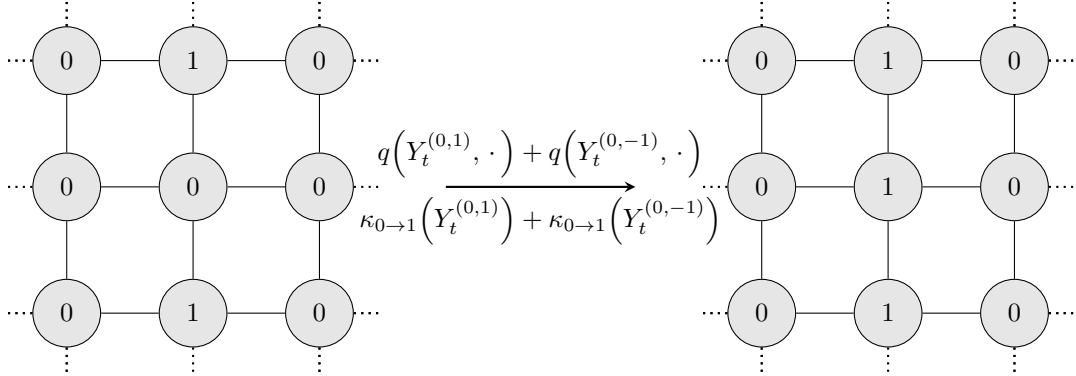
\begin{figure}[H]
  \centering
\begin{tikzpicture}[scale=0.84, every node/.style={font=\normalsize}, >=stealth, node distance=3cm and 2.5cm, auto,
  state/.style={circle, draw, fill=gray!20, minimum size=9mm, inner sep=1.5pt}
]

  \foreach \x in {0,1,2}{
    \foreach \y in {0,1,2}{
      \ifnum\x=1
        \ifnum\y=1
          \node[state] (L\x\y) at ({2*\x},{-2*\y}) {0};
        \else
          \ifnum\y=0
            \node[state] (L\x\y) at ({2*\x},{-2*\y}) {$1$};
          \else
            \node[state] (L\x\y) at ({2*\x},{-2*\y}) {$1$};
          \fi
        \fi
      \else
        \node[state] (L\x\y) at ({2*\x},{-2*\y}) {0};
      \fi
    }
  }

  \foreach \x in {0,1}{
    \foreach \y in {0,1,2}{
      \pgfmathtruncatemacro{\xp}{\x+1}
      \draw (L\x\y) -- (L\xp\y);
    }
  }
  \foreach \x in {0,1,2}{
    \foreach \y in {0,1}{
      \pgfmathtruncatemacro{\yp}{\y+1}
      \draw (L\x\y) -- (L\x\yp);
    }
  }

  \foreach \y in {0,1,2}{
    \draw[dotted, thick] (L2\y) -- ++(1,0);
  }
  \foreach \y in {0,1,2}{
    \draw[dotted, thick] (L0\y) -- ++(-1,0);
  }
  \foreach \x in {0,1,2}{
    \draw[dotted, thick] (L\x2) -- ++(0,-1);
  }
  \foreach \x in {0,1,2}{
    \draw[dotted, thick] (L\x0) -- ++(0,1);
  }
  \draw[dotted, thick] (L22) -- ++(1,0);
  \draw[dotted, thick] (L22) -- ++(0,-1);
  \draw[dotted, thick] (L02) -- ++(-1,0);
  \draw[dotted, thick] (L02) -- ++(0,-1);
  \draw[dotted, thick] (L20) -- ++(1,0);
  \draw[dotted, thick] (L20) -- ++(0,1);
  \draw[dotted, thick] (L00) -- ++(-1,0);
  \draw[dotted, thick] (L00) -- ++(0,1);

\draw[->, thick] (6,-2) -- node[below] {$\kappa_{1\to0}\Big(Y_t^{(0,1)}\Big)$} (9,-2);

  \foreach \x in {0,1,2}{
    \foreach \y in {0,1,2}{
      \ifnum\x=1
        \ifnum\y=1
          \node[state] (R\x\y) at ({11+2*\x},{-2*\y}) {$0$};
        \else
          \ifnum\y=0
            \node[state] (R\x\y) at ({11+2*\x},{-2*\y}) {$0$};
          \else
            \node[state] (R\x\y) at ({11+2*\x},{-2*\y}) {$1$};
          \fi
        \fi
      \else
        \node[state] (R\x\y) at ({11+2*\x},{-2*\y}) {0};
      \fi
    }
  }

  \foreach \x in {0,1}{
    \foreach \y in {0,1,2}{
      \pgfmathtruncatemacro{\xp}{\x+1}
      \draw (R\x\y) -- (R\xp\y);
    }
  }
  \foreach \x in {0,1,2}{
    \foreach \y in {0,1}{
      \pgfmathtruncatemacro{\yp}{\y+1}
      \draw (R\x\y) -- (R\x\yp);
    }
  }

  \foreach \y in {0,1,2}{
    \draw[dotted, thick] (R2\y) -- ++(1,0);
  }
  \foreach \y in {0,1,2}{
    \draw[dotted, thick] (R0\y) -- ++(-1,0);
  }
  \foreach \x in {0,1,2}{
    \draw[dotted, thick] (R\x2) -- ++(0,-1);
  }
  \foreach \x in {0,1,2}{
    \draw[dotted, thick] (R\x0) -- ++(0,1);
  }
  \draw[dotted, thick] (R22) -- ++(1,0);
  \draw[dotted, thick] (R22) -- ++(0,-1);
  \draw[dotted, thick] (R02) -- ++(-1,0);
  \draw[dotted, thick] (R02) -- ++(0,-1);
  \draw[dotted, thick] (R20) -- ++(1,0);
  \draw[dotted, thick] (R20) -- ++(0,1);
  \draw[dotted, thick] (R00) -- ++(-1,0);
  \draw[dotted, thick] (R00) -- ++(0,1);
\end{tikzpicture}

\caption{\label{fig:healing} Healing of a previously infected individual. The individual in the center is numbered~$(0,0)$.}
\end{figure}
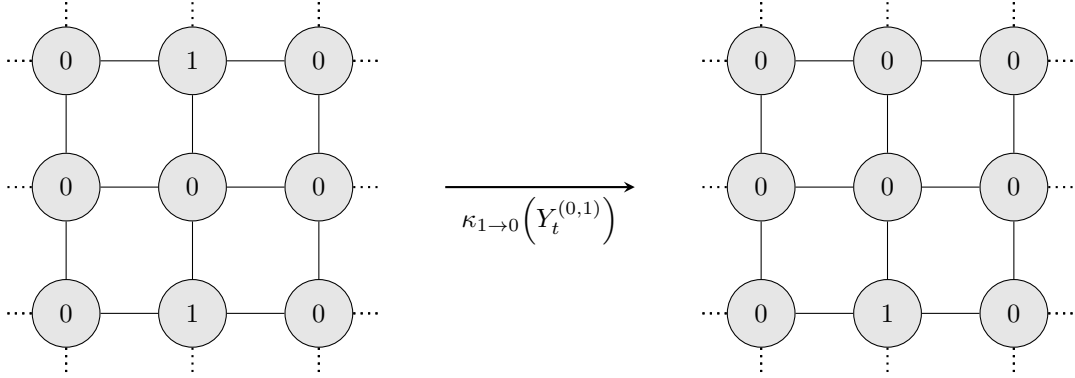

For each $n\geq 1$, we consider the contact process $\mathbf X^n$  with accelerated individual stochastic viral load $Y_{n\cdot}$ (instead of $Y$). We denote by $\mathbf I^n_t\in I$ the susceptible/infected configuration of the process at time $t$, which means that $(\mathbf I^n_t)_k=0$ (resp. $=1$) if the individual $k\in V_G$ is susceptible (resp. infected).

Before stating the convergence result, let us recall that  the classical contact process corresponds to the situation where there is no individual viral load, that is such that $D$ is a singleton, so that $\kappa_{0\to 1}$ and $\kappa_{1\to 0}$ are constants.
More precisely, a contact process on $I$ with infection rate $\lambda>0$ and healing rate $\mu>0$ is a pure jump process with  jumps rates given by
\[
i\to j\text{ at rate }\begin{cases}
\mu\quad&\textup{if}\quad i_k=1\,\textup{and}\,j=i-e_k,\\
\lambda\sum_{k\sim\ell}i_\ell\quad&\textup{if}\quad i_k=0\,\textup{and}\,j=i+e_k,\,\\
0\quad&\textup{otherwise}.
\end{cases}
\]
\begin{proposition}
\label{prop:contact}
Assume that there exists a probability measure  $\nu$ on $D$ such that
\[
\big\| \mathbf P_x(Y_t\in \cdot)-\nu\big\|_{TV}\underset{t\to\infty}{\longrightarrow} 0,\ \forall x\in D
\]
and that $\Big(\frac{1}{t}\int_0^t \kappa_{0\to 1}(Y_s)\,\mathrm ds\Big)_{t\geq 1}$ and $\Big(\frac{1}{t}\int_0^t \kappa_{1\to 0}(Y_s)\,\mathrm ds\Big)_{t\geq 1}$ are uniformly integrable for any initial position $Y_0\in D$. Then  $(\mathbf{I}_t^n)_{t\geq 0}$  converges in the Skorokhod topology on $D([0,+\infty),D)$ to a classical contact process on $I$ with infection rate $\nu(\kappa_{1\to 0})$ and healing rate $\nu(\kappa_{0\to 1})$.
\end{proposition}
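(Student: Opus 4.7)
The plan is to cast the model into the framework of Sections~\ref{sec:base}–\ref{sec:main-results} and then apply Theorem~\ref{thm:convSkorokhod}. Take $I=\{0,1\}^{(V_G)}$ endowed with the discrete topology, which is Polish (countable). For $i\in I$, let $V(i)=\{k\in V_G : i_k=1\}$ and set $E_i=D^{V(i)}$, with the convention $E_i=\{*\}$ when $V(i)=\emptyset$. Between two configuration changes, the process $X^\ori$ on $E_i$ evolves as $|V(i)|$ independent copies of $Y$ attached to the sites of $V(i)$. Define the rate function $b:E\to\mathbb R_+$ by, for $x=(x_k)_{k\in V(i)}\in E_i$,
\[
b(x)=\sum_{k\in V(i)}\kappa_{1\to 0}(x_k)+\sum_{k\notin V(i)}\sum_{\ell\in V(i),\,\ell\sim k}\kappa_{0\to 1}(x_\ell),
\]
which is finite since $V(i)$ is finite and the graph has bounded degree. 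Define the transition kernel $\pi$ so that, conditionally on a jump, each term above is selected with probability proportional to itself: a healing term yields the configuration $i-e_k$ (and drops the coordinate $x_k$), while an infection term via $\ell$ yields the configuration $i+e_k$ and attaches to $k$ a fresh viral load drawn from $p(x_\ell,\cdot)$. Take $G(t)=1-e^{-t}$. Then $\mathbf X^n$ built by the construction of Section~\ref{sec:base} with $X^{\ori,n}=X^\ori_{n\cdot}$ has as index process exactly the accelerated generalized contact process $\mathbf I^n$.

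Next I check Assumptions~\ref{assumption:ergo} and~\ref{hyp:b-UI}. For each $i\in I$, the hypothesis on $Y$ combined with Lemma~\ref{lem:tensorisation-cv-VT} yields convergence in total variation of the product process on $E_i$ to $\mu_i:=\nu^{\otimes V(i)}$; by Lemma~\ref{lem:strong-ergodic-implies-Harris} the process is positive Harris recurrent on $E_i$, which delivers Assumption~\ref{assumption:ergo}. For Assumption~\ref{hyp:b-UI}, observe that $\frac{1}{n}\int_0^n b(X^\ori_s)\,\mathrm d s$ is a finite sum (bounded by $|V(i)|+|V(i)|\cdot d$, with $d$ the maximal degree) of terms of the form $\frac{1}{n}\int_0^n \kappa_{1\to 0}(Y^k_s)\,\mathrm d s$ and $\frac{1}{n}\int_0^n \kappa_{0\to 1}(Y^\ell_s)\,\mathrm d s$, each of which is uniformly integrable by hypothesis; a finite sum of uniformly integrable families is uniformly integrable.

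The identification of the limit is a direct computation. Using $\mu_i=\nu^{\otimes V(i)}$ and Fubini,
\[
\mu_i(b)=|V(i)|\,\nu(\kappa_{1\to 0})+\nu(\kappa_{0\to 1})\sum_{k\notin V(i)}|\{\ell\in V(i):\ell\sim k\}|,
\]
which is exactly the total jump rate at $i$ of a classical contact process with healing rate $\nu(\kappa_{1\to 0})$ and infection rate $\nu(\kappa_{0\to 1})$. Plugging $\pi$ into the formula~\eqref{eq:jumpMarkov} for $P(i,J)$ with $J=\{i-e_k\}$ yields the rate $\nu(\kappa_{1\to 0})$, and with $J=\{i+e_k\}$ yields $\nu(\kappa_{0\to 1})\cdot|\{\ell\in V(i):\ell\sim k\}|$, since the law $p(x_\ell,\cdot)$ of the new viral load is integrated out by projecting via $\phi$. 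Hence the limit $\mathbf I$ of Section~\ref{sec:main-results} coincides in law with the classical contact process stated in the proposition.

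Finally I verify the hypotheses of Theorem~\ref{thm:convSkorokhod} for $\mathbf I$: non-explosion and tightness of $\mathbf I_t$. Since each jump increases $|V(\mathbf I_t)|$ by at most $1$ and the total jump rate from $i$ is bounded by $\bigl(\nu(\kappa_{1\to 0})+d\,\nu(\kappa_{0\to 1})\bigr)|V(i)|$, the process $|V(\mathbf I_t)|$ is dominated by a Yule process, which is non-explosive and has finite expectation at every fixed time; non-explosion of $\mathbf I$ follows. For tightness of $\mathbf I_t$ in the discrete topology of $I$, note that after $k$ jumps the infected set is contained in the graph-ball of radius $k$ around $V(\mathbf I_0)$, and this ball is finite by bounded degree; controlling the number of jumps by time $t$ through the Yule domination confines $\mathbf I_t$ to a finite subset of $I$ with probability arbitrarily close to $1$. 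The main technical obstacle is precisely this tightness step—ensuring that both the number of infected sites and their spatial spread remain bounded with high probability—and it is handled cleanly by the Yule-process domination. Theorem~\ref{thm:convSkorokhod} then gives the announced convergence in Skorokhod topology on $D([0,+\infty),I)$.
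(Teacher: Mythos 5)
Your proof is correct and follows essentially the same route as the paper: embed the model in the framework of Sections~\ref{sec:base}--\ref{sec:main-results}, verify Assumptions~\ref{assumption:ergo} and~\ref{hyp:b-UI} by tensorizing the total-variation convergence of $Y$ (Lemmas~\ref{lem:tensorisation-cv-VT} and~\ref{lem:strong-ergodic-implies-Harris}) and by observing that a finite sum of uniformly integrable families is uniformly integrable, then compute $\mu_i(b)$ and $P(i,\cdot)$ and invoke Theorem~\ref{thm:convSkorokhod}. The only differences are cosmetic in one place (you take $E_i=D^{V(i)}$, keeping only the infected coordinates, whereas the paper pads susceptible sites with a dummy singleton $D_0=\{0\}$ and sets $E_i=\prod_k D_{i_k}$) and a genuine addition in another: you explicitly verify the non-explosion and tightness hypotheses of Theorem~\ref{thm:convSkorokhod} via domination of $|V(\mathbf I_t)|$ by a Yule process together with the finite-range constraint coming from the bounded degree, a step the paper's proof leaves implicit when it declares the result ``an immediate consequence'' of Theorem~\ref{thm:convSkorokhod}; your identification of which rate is the infection rate and which is the healing rate matches the paper's concluding computation (the statement of the proposition appears to have them transposed).
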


Proposition~\ref{prop:contact} is an immediate consequence of our Theorem~\ref{thm:convSkorokhod}. To see this, let us write the contact process with individual stochastic viral load in the formalism of this abstract result.
We set $D_0=\{0\}$ and $D_1=D$,  $Y^1=Y$ and denote by $Y^0$ the constant process on $D_0$.
For all $i=(i_k)_{k\in V_G}\in I$, we set 
$E_i=\prod_{k\in V_G} D_{i_k}$ and assume that the dynamic of $X^\ori$ on $E_i$ is given by $(Y^{i_k,k})_{k\in V}$, where the $Y^{0,k}$ and $Y^{1,k}$ are independent copies of $Y^0$ and $Y^1$ respectively. One observes that the process on $E_i$ is positively Harris recurrent with stationary measure given by $\mu_i=\otimes_{k\in V_G} \nu_{i_k}$ where $\nu_0=\delta_{\{0\}}$ and $\nu_1=\nu$ (the argument is the same as in the branching process example).
Let $i\in I$ and $x_i\in E_i$, we have
\[
\mathbf P_{x_i}\big(\forall t\geq0, (Y_t^{i_1,1},Y_t^{i_2,2},\ldots)\in E_i\big)=1.
\]
Since we start with a finite number of infected individuals i.e. $\sum_{k\in V_G}i_k<\infty$ and using the lemma \ref{lem:tensorisation-cv-VT} we have,
\[
\frac{1}{t}\int_0^t\kappa_{1\to0}(Y_s^{i_1,1},Y_s^{i_2,2},\ldots,)\,\mathrm ds\underset{t\to\infty}{\longrightarrow}\mu_i(\kappa_{1\to0})\quad\mathbf P_{x_i}\textup{-p.s.}
\]
and
\[
\frac{1}{t}\int_0^t\kappa_{0\to1}(Y_s^{i_1,1},Y_s^{i_2,2},\ldots)\,\mathrm ds\underset{t\to\infty}{\longrightarrow}\mu_i(\kappa_{0\to1})\quad\mathbf P_{x_i}\textup{-p.s.}
\]

We have the following transition rates for the index process $\mathbf (\mathbf I^1_t)_{t\geq 0}$: for $x \in E_i$
\[
    i\to j\text{ at rate }
    \begin{cases}
    \kappa_{1\to0}(x_k)&\text{ if }x_k\in D_1\,(\text{i.e.}\,i_k=1)\text{ and }j=i-e_k\\
    \sum_{\ell\sim k,\,i_\ell=1}\kappa_{0\to1}( x_\ell)&\text{ if }x_k\in D_0\,(\text{i.e.}\,i_k=0)\text{ and }j=i+e_k\\
    0&\text{ otherwise.}
    \end{cases}
\]
where $e_k$ denotes the configuration with only zeros but a one at position $k$. We define $b : E \to \mathbb{R}_+$ by (note that the two sums are finite since  there is only a finite number of infective)
\[
b(x) = \sum_{k\in V_G} i_k\,\kappa_{1\to0}(x_k) + \sum_{k\in V_G}(1-i_k)\Big(\sum_{\ell\sim k}i_\ell\,\kappa_{0\to1}(x_\ell)\Big).
\]
For all $k\in V_G$ and $x\in E$ such that $x_k\in D_1$ let us define the probability kernel $K_{1\to0,k}:E\times E\longrightarrow[0,1]$
\[
K_{1\to0,k}( x, \deriv y) =\Big(\delta_{\{0\}}\otimes\prod_{\ell\in V_G\setminus\{k\}} \delta_{x_\ell}\Big)(\deriv y)
\]
and $K_{0\to1,k}:E\times E\longrightarrow[0,1]$ for $x\in E$ such that $x_k=0$ by
\[
K_{0\to1,k}(x,\mathrm dy)=\sum_{\ell\sim k}i_\ell\,\Big(q(x_\ell,\,\cdot\,)\otimes\prod_{m\in V_G\setminus\{k\}}\delta_{x_m}\Big)(\mathrm dy),
\]
where 
\[
q(x_\ell,\cdot)=\frac{\kappa_{0\to 1}(x_\ell)}{\sum_{j\sim k}i_j\,\kappa_{0 \to 1}(x_{j})}\,p(x_\ell,\,\cdot\,).
\]
Now, we can define the probability jumps transition $\pi:E\times E\to[0,1]$ by
\begin{align*}
\pi(x,\mathrm dy)&=\frac{1}{b(x)}\sum_{k\in V_G}i_k\,\kappa_{1\to0}(x_k)\,K_{1\to0,k}(x,\mathrm dy)\\
&+\frac{1}{b(x)}\sum_{k\in V_G}(1-i_k)\Big(\sum_{\ell\sim k}\,i_\ell\,\kappa_{0\to1}(x_\ell)\Big)\,K_{0\to1,k}(x,\mathrm dy),
\end{align*}

\noindent According to Theorem~\ref{thm:convSkorokhod}, the accelerated index process $\mathbf I^n$ converges in Skorokhod topology to the $I$-valued pure-jump Markov process $\mathbf I$ with jumps rate given by $1-e^{-t\mu_i(b)}$ with
\[
\mu_i(b)=\nu(\kappa_{1\to0})\sum_{k\in V_G}i_k+\nu(\kappa_{0\to1})\sum_{k\sim\ell}i_\ell
\]
and jumps transitions probabilities $(P(i,j))_{i,j\in I}$ 
\[
P(i,J)=\int_E\frac{\mu_i(\mathrm dx)}{\mu_i(b)}\,b(x)\,\nu(x,J),\quad\forall i\in I\,\textup{and}\,J\in\mathcal I.
\]
where for all $x$ $\nu(x,\,\cdot\,)$ is the push-forward measure of $\pi(x,\,\cdot\,)$ by $\phi$ where we remind $\phi(x)=i$ if and only if $x\in E_i$. Hence $P$ is given by
\begin{align*}
P(i,j)&=\begin{cases}
\int_E\frac{\mu_i(\mathrm dx)}{\mu_i(b)}\,\kappa_{1\to0}(x)\quad&\textup{if}\quad i_k=1\,\textup{and}\,j=i-e_k,\\
\int_E\frac{\mu_i(\mathrm dx)}{\mu_i(b)}\,\kappa_{0\to1}(x)\sum_{k\sim\ell}i_\ell\quad&\textup{if}\quad i_k=0\,\textup{and}\,j=i+e_k,\,\\
0\quad&\textup{otherwise}
\end{cases}\\
&=\begin{cases}
\frac{\nu(\kappa_{1\to0})}{\mu_i(b)}\quad&\textup{if}\quad i_k=1\,\textup{and}\,j=i-e_k,\\
\frac{\nu(\kappa_{0\to1})}{\mu_i(b)}\sum_{k\sim\ell}i_\ell\quad&\textup{if}\quad i_k=0\,\textup{and}\,j=i+e_k,\,\\
0\quad&\textup{otherwise}.
\end{cases}
\end{align*}
In particular, the transition rates of $\mathbf I$ are given by
\[
i\to j\text{ at rate }\begin{cases}
\nu(\kappa_{1\to0})\quad&\textup{if}\quad i_k=1\,\textup{and}\,j=i-e_k,\\
\nu(\kappa_{0\to1})\sum_{k\sim\ell}i_\ell\quad&\textup{if}\quad i_k=0\,\textup{and}\,j=i+e_k,\,\\
0\quad&\textup{otherwise},
\end{cases}
\]
which corresponds to a classical contact process with infection rate $\nu(\kappa_{0\to1})$ and healing rate $\nu(\kappa_{1\to0})$.

\section{Proofs of the convergence results at jump times}
\label{sec:proofs}
\subsection{Proof of Proposition~\ref{prop:convergence1}}
\label{sec:proofprop:convergence1}
Let us first remark that, if $\mu_i(b)=0$, then 
\[
\lim_{n \to \infty}G\Big(\int_0^{t} \,b(X_{ns}^{\ori}(v^\ori))\,\deriv s\Big) =\lim_{n \to \infty} G\Big(\frac{1}{n} \int_0^{n t} b(X_{s}^\ori) \, \deriv s\Big) = G(t \mu_i(b)) = G(0)=0.
\]
In particular,
\[
\tau_1^n = \inf\{t\geq 0:  G\Big(\int_0^t \,b(X_{ns}^{\ori}(v^\ori))\,\deriv s\Big)> U\}\overset{\textup{a.s.}}{\underset{n\to\infty}{\longrightarrow}} +\infty.
\]
This shows that
\[
\lim_{n\to\infty}\espn_{x_i}\big[f(\mathbf{X}_{\tau_1^n}^n)\,\mathbf{1}_{\{\tau_1^n\leq T\}}\big]=0,
\]
which gives the result.

From now on, we assume that $\mu_i(b)>0$. 

\textit{Step 1.} For this proof we first suppose that $G':[0,\infty)\longrightarrow [0,\infty)$ is both smooth and compactly supported.
Let $T>0$ a real positive number, $f:E\longrightarrow\mathbb{R}$ a bounded $\mathcal E$-measurable function. Since $f$ is bounded, even if it means replacing $f$ with $f_+-f_-$, we will assume that $f$ is non-negative. Now, $f:E\longrightarrow\mathbb R_+$ is a bounded, positive and $\mathcal E$-measurable function. Then, using Proposition~\ref{prop:moyenne1}, for all $i\in I$, $x_i\in E_i$ and $n\geq1$ we have
\[
\mathbf E_{x_i}^n\big[f(\mathbf{X}_{\tau_1^n}^n)\,\mathbf{1}_{\{\tau_1^n\leq T\}}\big]=\mathbf{E}_{x_i}^\ori\Big[\int_0^T\deriv t\,G'\Big(\int_0^tb(X_{ns}^\ori)\,\deriv s\Big)\,b(X_{nt}^\ori)\,\pi(X_{nt}^\ori,f)\Big]
\]
which is equal, by adding and subtracting to
\begin{gather}
    \mathbf{E}_{x_i}^\ori\Big[\int_0^T\deriv t\,\Big(G'\Big(\int_0^tb(X_{ns}^\ori)\,\deriv s\Big)-G'(t\mu_i(b)))\,b(X_{nt}^\ori)\,\pi(X_{nt}^\ori,f)\Big]\notag\\
    +\mathbf{E}_{x_i}^\ori\Big[\int_0^T\deriv t\,G'(t\mu_i(b))\,b(X_{nt}^\ori)\,\pi(X_{nt}^\ori,f)\Big]\label{eq:diffG}.
\end{gather}
Then, for all $n\geq1$ and $t\geq0$, by Proposition~4.10 of Chapter~0 in \cite{RevuzYor}, the fisrt term above is equal to
\[
    \mathbf E_{x_i}^\ori\Big[\int_0^{I_n(T)}\deriv t\,\big(G'(t)-G'(\mu_i(b)\,I_n^{-1}(t))\big)\,\pi(X_{nI_n^{-1}(t)}^\ori,f)\Big].
\]
By adding and subtracting, the  term above can be rewritten as follow
\begin{gather}
    \mathbf E_{x_i}^\ori\Big[\int_0^{T\mu_i(b)}\deriv t\,\big(G'(t)-G'(\mu_i(b)\,I_n^{-1}(t))\big)\,\pi(X_{nI_n^{-1}(t)}^\ori,f)\Big]\notag\\
    +\mathbf E_{x_i}^\ori\Big[\int_{T\mu_i(b)}^{I_n(T)}\deriv t\,\big(G'(t)-G'(\mu_i(b)\,I_n^{-1}(t))\big)\,\pi(X_{nI_n^{-1}(t)}^\ori,f)\Big]
    \label{eq:proofconvmean1}
\end{gather}
The first term in \eqref{eq:proofconvmean1} can be bounded above by
\[
    \|f\|_\infty\int_0^{T\mu_i(T)}\mathbf E_{x_i}^\ori\big[G'(t)-G'\big(\mu_i(b)I_n^{-1}(t)\big)\big]\,\deriv t.
\]
By the ergodicity assumption~\eqref{hyp:ergo}, we have, almost surely, $\lim_{n\to\infty} I_n(t)=t\mu_i(b)$.
Since $I_\infty:t\mapsto t\mu_i(b)$ is increasing with continuous inverse, 
we deduce that, almost surely, $I_n^{-1}(t)$ converges to $I^{-1}_\infty(t)=t/\mu_i(b)$ for all $t\geq 0$. Since $G'$ is bounded and continuous, we have
\[
    \mathbf E_{x_i}^\ori\big[G'(t)-G'\big(\mu_i(b)\,I_n^{-1}(t)\big)\big]\underset{n\to\infty}{\longrightarrow}0,
\]
and by the dominated convergence theorem
\begin{equation}
    \int_0^{T\mu_i(b)}\mathbf E_{x_i}^\ori\big[G'(t)-G'\big(\mu_i(b)\,I_n^{-1}(t)\big)\big]\,\deriv t\underset{n\to\infty}{\longrightarrow}0.
    \label{lim:proofconvmean1}
\end{equation}
The second term in \eqref{eq:proofconvmean1} is dominated by
\begin{gather*}
    \mathbf E_{x_i}^\ori\Big[\int_{T\mu_i(b)}^{I_n(T)}\deriv t\,\big(G'(t)-G'(\mu_i(b)\,I_n^{-1}(t))\big)\,\pi(X_{nI_n^{-1}(t)}^\ori,f)\Big]\\
    \leq 2\,\|G'\|_\infty\,\|f\|_\infty\,\mathbf E_{x_i}^\ori\big[\lvert I_n(T)-T\mu_i(b)\rvert\big]
\end{gather*}
which goes to $0$ when $n$ goes to infinity, since the family of random variables $(I_n(T))_{n\geq1}$ is uniformly integrable and $I_n(T)$ converges in probability to $T\mu_i(b)$. Then
\begin{equation}
    \mathbf E_{x_i}^\ori\Big[\int_{T\mu_i(b)}^{I_n(T)}\deriv t\,\big(G'(t)-G'(\mu_i(b)I_n^{-1}(t))\big)\,\pi(X_{nI_n^{-1}(t)}^\ori,f)\Big]\underset{n\to\infty}{\longrightarrow}0.
    \label{lim:proofconvmean2}
\end{equation}
So, by \eqref{eq:proofconvmean1}, \eqref{lim:proofconvmean1} and \eqref{lim:proofconvmean2}  we have shown that
\begin{equation}
\mathbf{E}_{x_i}^\ori\Big[\int_0^T\deriv t\,\Big(G'\Big(\int_0^tb(X_{ns}^\ori)\,\deriv s\Big)-G'(t\mu_i(b))\Big)\,b(X_{nt}^\ori)\,\pi(X_{nt}^\ori,f)\Big]\underset{n\to\infty}{\longrightarrow}0.\label{lim:proofconvmean3}
\end{equation}
Doing an integration by parts, for the second term  in~\eqref{eq:diffG}, we have
\begin{align}
\mathbf E_{x_i}^\ori\Big[\int_0^T\deriv t\,G'(t\mu_i(b))\,b(X_{nt}^\ori)\,\pi(X_{nt}^\ori,f)\Big]&= \mathbf E_{x_i}^\ori\Big[G'(T\mu_i(b))\,\frac{1}{n}\int_0^{nT}\deriv t\,b(X_{t}^\ori)\,\pi(X_{t}^\ori,f)\Big]
\label{eq:proofconvmean3}\\
&-\mathbf E_{x_i}^\ori\Big[\int_0^T\deriv t\,\mu_i(b)\, G''(t\mu_i(b))\,\frac{1}{n}\int_0^{nt}\deriv s\,b(X_{s}^\ori)\,\pi(X_{s}^\ori,f)\Big].\notag
\end{align}
Since $b\in L^1(\mu_{i})$ and $f$ is bounded, $x\mapsto b(x)\,\pi(x,f)\in L^1(\mu_{i})$, and, in addition, the sequence of random variables $\frac{1}{n}\int_0^{nT}\deriv t\,b(X_t^\ori)\,\pi(X_t^\ori,f)$, $n\geq 1$, is uniformly integrable (by assumption), we deduce that
\begin{equation}
    \mathbf E_{x_i}^\ori\Big[\frac{1}{n}\int_0^{nT}\deriv t\,b(X_t^\ori)\,\pi(X_t^\ori,f)\Big]\underset{n\to\infty}{\longrightarrow}T\int_E\mu_{i}(\deriv x)\,b(x)\,\pi(x,f).
    \label{lim:proofconvmean4}
\end{equation}
For the second term in \eqref{eq:proofconvmean3} we have
\begin{multline}
    \mathbf E_{x_i}^\ori\Big[\int_0^T\deriv t\,\mu_i(b)\,G''(t\mu_i(b))\int_0^tb(X_{ns}^\ori)\,\pi(X_{ns}^\ori,f)\,\deriv s\Big]\\
    =\mu_i(b)\int_0^T\deriv t\,G''(t\mu_i(b))\mathbf E_{x_i}^\ori\Big[\int_0^tb(X_{ns}^\ori)\,\pi(X_{ns}^\ori,f)\,\deriv s\Big]
    \label{eq:proofconvmean4}
\end{multline}
As $b$ and $f$ are positive function, we remark that for all $n\geq1$ the function 
\[
    J_n:\;t\in[0,T]\longmapsto\mathbf E_{x_i}^\ori\Big[\int_0^tb(X_{ns}^\ori)\,\pi(X_{ns}^\ori,f)\,\deriv s\Big]
\]
is non-decreasing. Since $f$ is bounded,  by Assumption~\ref{hyp:b-UI}, the family $\big(\frac{1}{n}\int_0^{nt}b(X_s^\ori)\,\pi(X_s^\ori,f)\,\mathrm ds\big)_{n\geq1}$ is uniformly integrable for all $t\geq0$ under $\mathbf P_x^\ori$ for all $x\in E$. In addition,  by Lemma~\ref{lemma:assumptionErgo}, $b(\cdot)\pi(\,\cdot\,,f)\in \mathcal A_i$. Hence, by Assumption~\ref{assumption:ergo} and uniform integrability, the limit of $J_n$ is 
\[
    J_\infty:\;t\in[0,T]\longmapsto t\int_E\mu_i(\deriv x)\,b(x)\,\pi(x,f)
\]
which is continuous. So, by the Dini's theorem
\[
    \sup_{t\in[0,T]}\big\lvert J_n(t)-J_\infty(t)\big\rvert\underset{n\to\infty}{\longrightarrow}0,
\]
and, since $G''$ is bounded we also have
\[
    \sup_{t\in[0,T]}\big\lvert G''\big(t\mu_i(b)\big)\,\big(J_n(t)-J_\infty(t)\big)\big\rvert\underset{n\to\infty}{\longrightarrow}0.
\]
Finally, by combining this limit and \eqref{eq:proofconvmean4}, we get
\begin{multline}
    \mathbf E_{x_i}^\ori\Big[\int_0^T\deriv t\,\mu_i(b)\, G''(t\mu_i(b))\,\frac{1}{n}\int_0^{nt}\deriv s\,b(X_{s}^\ori)\,\pi(X_{s}^\ori,f)\Big]\\
    \underset{n\to\infty}{\longrightarrow}\int_0^T\deriv t\,\mu_i(b)\,G''(t\mu_i(b))\,t\int_E\mu_i(\deriv x)\,b(x)\,\pi(x,f).
    \label{lim:proofconvmean5}
\end{multline}
Finally, by \eqref{eq:proofconvmean3}, \eqref{lim:proofconvmean4} and \eqref{lim:proofconvmean5} we have
\begin{multline*}
    \mathbf E_{x_i}^\ori\Big[\int_0^T\deriv t\,G'(t\mu_i(b))\,b(X_{nt}^\ori)\,\pi(X_{nt}^\ori,f)\Big]\\
    \underset{n\to\infty}{\longrightarrow} G'(T\mu_i(b))\,T\int_E\mu_i(\deriv x)\,b(x)\,\pi(x,f)-\int_0^T\,\deriv t\,  G''(t\mu_i(b))\,t\int_E\mu_i(\deriv x)\,b(x)\,\pi(x,f).
\end{multline*}
Again by doing an integration by parts, the term above equals
\begin{gather*}
G'(T\mu_i(b))\,T\int_E\mu_i(\deriv x)\,b(x)\,\pi(x,f)- G'(T\mu_i(b))\,T\int_E\mu_i(\deriv x)\,b(x)\,\pi(x,f)\\
+\int_0^T\deriv t\, G'(t\mu_i(b))\int_E\mu_i(\deriv x)\,b(x)\,\pi(x,f),
\end{gather*}
so that,
\[
    \mathbf E_{x_i}^\ori\Big[\int_0^T\deriv t\, G'(t\mu_i(b))\,b(X_{nt}^\ori)\int_E\pi(X_{nt}^\ori,\deriv y)\,f(y)\Big]\underset{n\to\infty}{\longrightarrow} \int_0^T\deriv t\, G'(t\mu_i(b))\int_E\mu_i(\deriv x)\,b(x)\,\pi(x,f).
\]
Then, using this last converge, \eqref{eq:diffG} and \eqref{lim:proofconvmean1}, we deduce that
\[
\lim_{n\to\infty}\esp_{x_i}\big[f(\mathbf{X}_{\tau_1^n}^n)\,\mathbf{1}_{\{\tau_1^n\leq T\}}\big]=\int_0^T G'(t\mu_i(b))\,\deriv t\int_E\mu_i(\deriv x)\,b(x)\,\pi(x,f).
\label{eq:smoothlimit}
\]

\textit{Step 2.} If $G'\in L^1(\mathbb R_+)$ is not smooth and compact supported, by density, let $\varepsilon>0$ and $G'_\varepsilon\in C_c^\infty(\mathbb{R}_+)$ such that 
\[
    \|G'-G'_\varepsilon\|_{L^1}\leq\varepsilon\,/(3\,\|f\|_\infty).
\]
Then, we rewrite the quantity below
\[
\mathbf E_{x_i}^n\big[f(\mathbf{X}_{\tau_1^n}^n)\,\mathbf{1}_{\{\tau_1^n\leq T\}}\big]=\mathbf{E}_{x_i}^\ori\Big[\int_0^T\deriv t\,G'\Big(\int_0^tb(X_{ns}^\ori)\,\deriv s\Big)\,b(X_{nt}^\ori)\,\pi(X_{nt}^\ori,f)\Big]
\]
as follows
\begin{gather}
\mathbf{E}_{x_i}^\ori\Big[\int_0^T\deriv t\,\Big(G'\Big(\int_0^tb(X_{ns}^\ori)\,\deriv s\Big)-G'_\varepsilon\Big(\int_0^tb(X_{ns}^\ori)\,\deriv s\Big)\Big)\,b(X_{nt}^\ori)\,\pi(X_{nt}^\ori,f)\Big]\notag\\
+\mathbf{E}_{x_i}^\ori\Big[\int_0^T\deriv t\,G'_\varepsilon\Big(\int_0^tb(X_{ns}^\ori)\,\deriv s\Big)\,b(X_{nt}^\ori)\,\pi(X_{nt}^\ori,f)\Big].\label{eq:finalterm}
\end{gather}
The first term in \eqref{eq:finalterm} is bounded above by
\[
    \|f\|_\infty\,\mathbf{E}_{x_i}^\ori\Big[\int_0^T\deriv t\,\Big(G'\Big(\int_0^tb(X_{ns}^\ori)\,\deriv s\Big)-G'_\varepsilon\Big(\int_0^tb(X_{ns}^\ori)\,\deriv s\Big)\Big)\,b(X_{nt}^\ori)\Big]
\]
which is equal to
\begin{equation}
     \|f\|_\infty\,\mathbf{E}_{x_i}^\ori\Big[\int_0^{I_n(T)}(G'(t)-G'_\varepsilon(t))\,\deriv t\Big]\leq\|f\|_\infty\,\|G'-\tilde G'\|_{L^1}\leq\varepsilon/3.
     \label{eq:lastinegality}
\end{equation}
By the step 1, the second term in \eqref{eq:finalterm} goes to 
\begin{equation}
    \int_0^T G'_\varepsilon(t\mu_i(b))\,\deriv t\int_E\mu_i(\deriv x)\,b(x)\,\pi(x,f)
    \label{lim:finalterm}
\end{equation}
Then using \eqref{eq:smoothlimit}, \eqref{eq:lastinegality} and \eqref{lim:finalterm} we deduce for all $\varepsilon>0$, there exists $n_\varepsilon\geq0$ such that for all $n\geq n_\varepsilon$
\begin{align*}
    &\Big\lvert\mathbf E_{x_i}^n\big[f(\mathbf{X}_{\tau_1^n}^n)\,\mathbf{1}_{\{\tau_1^n\leq T\}}\big]-\int_0^T G'(t\mu_i(b))\,\deriv t\int_E\mu_i(\deriv x)\,b(x)\,\pi(x,f)\Big\rvert\\
    &\leq\Big\lvert\mathbf{E}_{x_i}^\ori\Big[\int_0^T\deriv t\,\Big(G'\Big(\int_0^tb(X_{ns}^\ori)\,\deriv s\Big)-G'_\varepsilon\Big(\int_0^tb(X_{ns}^\ori)\,\deriv s\Big)\Big)\,b(X_{nt}^\ori)\,\pi(X_{nt}^\ori,f)\Big]\Big\rvert\\
    &+\Big\lvert\mathbf{E}_{x_i}^\ori\Big[\int_0^T\deriv t\,G'_\varepsilon\Big(\int_0^tb(X_{ns}^\ori)\,\deriv s\Big)\,b(X_{nt}^\ori)\,\pi(X_{nt}^\ori,f)\Big]-\int_0^T G'_\varepsilon(t\mu_i(b))\,\deriv t\int_E\mu_i(\deriv x)\,b(x)\,\pi(x,f)\Big\rvert\\
    &+\Big\lvert\int_0^T \big(G'_\varepsilon(t\mu_i(b))- G'(t\mu_i(b))\big)\,\deriv t\int_E\mu_i(\deriv x)\,b(x)\,\pi(x,f)\Big\rvert\leq\varepsilon,
\end{align*}
which concluded the proof of the proposition.

\subsection{Proof of Theorem~\ref{thm:cv-X-jump_times}}
\label{sec:proofthm:cv-I-jump_times}
In order to prove Theorem~\ref{thm:cv-X-jump_times}, we need the following lemma:
\begin{lemma}
\label{lem:ergon}
Let $x_0\in E$ and $(f_n)_{n\geq0}$ a sequence of functions such that for all $n\geq0$ $f_n\in L^1(\mu_{x_0})$. We assume that there exists $f\in L^1(\mu_{x_0})$ such that for all $x\in E$, $\lim_n \Delta_n(x):=f_n(x)-f(x)=0$ and 
\[
    \bar \Delta:=\sup_n |\Delta_n|\in L^1(\mu_{x_0}).
\]
Then, $\mathbf{P}_{x_0}^\ori$-almost surely
\[
    \lim_{n\to\infty}\frac{1}{t}\int_0^tf_n(X_{ns}^\ori)\,\deriv s=\mu_{x_0}(f).
\]
\end{lemma}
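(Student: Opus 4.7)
\textbf{Proof plan for Lemma~\ref{lem:ergon}.} The plan is to decompose
\[
\frac{1}{t}\int_0^t f_n(X_{ns}^\ori)\,\deriv s
=\frac{1}{t}\int_0^t f(X_{ns}^\ori)\,\deriv s
+\frac{1}{t}\int_0^t \Delta_n(X_{ns}^\ori)\,\deriv s,
\]
and treat the two terms separately. The first term, after the change of variable $u=ns$, becomes $\frac{1}{nt}\int_0^{nt} f(X_u^\ori)\,\deriv u$, and converges almost surely to $\mu_{x_0}(f)$ by Assumption~\ref{assumption:ergo} (applied to $f\in \mathcal A_{\phi(x_0)}$, which is granted since $f\in L^1(\mu_{x_0})$ is dominated by $|f_n|+\bar\Delta$). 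Thus it remains to show that the second term vanishes almost surely.

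For this second, main term, the central difficulty is that the integrand depends on $n$, so Assumption~\ref{assumption:ergo} cannot be applied directly. My plan is to dominate $|\Delta_n|$, for all $n\geq N$, by the \emph{tail supremum}
\[
D_N(x):=\sup_{k\geq N}|\Delta_k(x)|,\qquad x\in E.
\]
The sequence $(D_N)_{N\geq 0}$ is non-increasing, bounded pointwise by $\bar\Delta\in L^1(\mu_{x_0})$, and tends pointwise to $0$ by the assumed convergence $\Delta_n(x)\to 0$. Hence, by the dominated convergence theorem, $\mu_{x_0}(D_N)\to 0$ as $N\to\infty$.

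For each fixed $N$, apply the ergodic convergence~\eqref{hyp:ergo} to $D_N$ (which lies in $\mathcal A_{\phi(x_0)}$ since $D_N\leq \bar\Delta$): almost surely,
\[
\frac{1}{nt}\int_0^{nt} D_N(X_u^\ori)\,\deriv u
\xrightarrow[n\to\infty]{}\mu_{x_0}(D_N).
\]
Since $|\Delta_n|\leq D_N$ for every $n\geq N$, taking absolute values and $\limsup$ yields
\[
\limsup_{n\to\infty}\left|\frac{1}{t}\int_0^t \Delta_n(X_{ns}^\ori)\,\deriv s\right|
\leq \mu_{x_0}(D_N)\quad\text{a.s.}
\]
Letting $N\to\infty$ forces this $\limsup$ to be $0$, which combined with the first step proves the claim.

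The main obstacle, as already mentioned, is precisely that the integrand varies with $n$: one cannot apply Birkhoff-type convergence to a moving target. The trick of replacing $|\Delta_n|$ by the fixed envelope $D_N$ (for $n\geq N$) converts the problem into an ergodic average of a single $L^1$ function, after which only standard dominated convergence in $\mu_{x_0}$ is needed. The only mild subtlety is verifying that $D_N$ (and $f$) belong to $\mathcal A_{\phi(x_0)}$, but this follows from the domination by $\bar\Delta+|f_0|$, together with the implicit integrability $\int_0^t\bar\Delta(X_s^\ori)\,\deriv s<\infty$ inherited from the hypothesis $\bar\Delta\in L^1(\mu_{x_0})$ together with $f_n\in L^1(\mu_{x_0})$.
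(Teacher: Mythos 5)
Your proof is correct and follows essentially the same strategy as the paper's: both replace the $n$-dependent integrand $\Delta_n$ by a fixed dominating envelope with small $\mu_{x_0}$-integral and apply the ergodic assumption to that envelope, your tail supremum $D_N=\sup_{k\geq N}|\Delta_k|$ being a cleaner packaging of the paper's Egorov-type sets $E_\varepsilon^{n}$ (indeed $D_{n_\varepsilon}\leq \tfrac{\varepsilon}{2}\mathbf 1_{E_\varepsilon^{n_\varepsilon}}+\bar\Delta\,\mathbf 1_{(E_\varepsilon^{n_\varepsilon})^c}$). The only caveat --- shared with the paper's own proof, so not held against you --- is that applying \eqref{hyp:ergo} to $D_N$ and to $f$ requires these functions to lie in $\mathcal A_{\phi(x_0)}$, which does not follow from membership in $L^1(\mu_{x_0})$ alone (cf.\ Example~\ref{example:bad-Harris}); this is harmless in the intended application, where all the functions involved are dominated by $\|f\|_\infty\, b$ with $b\in\mathcal A_i$.
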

\begin{proof}
Let $\varepsilon>0$, we set for all $x\in E$
\[
    n_\varepsilon(x)=\inf\{n\geq0:\Delta_m(x)\leq\varepsilon/2, \,\forall m\geq n\}
\]
and for all $n\geq0$
\[
    E_\varepsilon^n=\{x\in E:n_\varepsilon(x)\leq n\}.
\]
By assumption, for all $x\in E$, $\Delta_n(x)$ goes to zero when $n$ goes to infinity, then, for all $x\in E$, $n_\varepsilon(x)<\infty$ and $(E_\varepsilon^n)_{n\geq0}$ is a non-decreasing sequence so $\cup_{n\geq0}E_\varepsilon^n=E$. Then, there exists $n_\varepsilon\geq0$ such that  {$\mu_{x_0}( \bar\Delta \mathbf 1_{(E_\varepsilon^{n_\varepsilon})^c})\leq \varepsilon/2$}. Let $n\geq n_\varepsilon$, 
\begin{align*}
\frac{1}{t}\int_0^t\Delta_n(X_{ns}^\ori)\,\deriv s&=\frac{1}{t}\int_0^t\Delta_n(X_{ns}^\ori)\,\mathbf{1}_{\{X_{ns}^\ori\in E_\varepsilon^{n_\varepsilon}\}}\,\deriv s+\frac{1}{t}\int_0^t\Delta_n(X_{ns}^\ori)\,\mathbf{1}_{\{X_{ns}^\ori\not\in E_\varepsilon^{n_\varepsilon}\}}\,\deriv s\\
&=\frac{1}{nt}\int_0^{nt}\Delta_n(X_s^\ori)\,\mathbf{1}_{\{X_{s}^\ori\in E_\varepsilon^{n_\varepsilon}\}}\,\deriv s+\frac{1}{nt}\int_0^{nt}\Delta_n(X_s^\ori)\,\mathbf{1}_{\{X_{s}^\ori\not\in E_\varepsilon^{n_\varepsilon}\}}\,\deriv s\\
&\leq\frac{\varepsilon}{2nt}\int_0^{nt}\mathbf{1}_{\{X_{s}^\ori\in E_\varepsilon^{n_\varepsilon}\}}\,\deriv s+\frac{1}{nt}\int_0^{nt}\bar\Delta(X_s^\ori) \mathbf{1}_{\{X_{s}^\ori\not\in E_\varepsilon^{n_\varepsilon}\}}\,\deriv s,
\end{align*}
which converges $\mathbf{P}_{x_0}^\ori$-p.s. to $\varepsilon\,\mu_{x_0}(E_\varepsilon^{n_\varepsilon})/2+\mu_{x_0}( \bar\Delta \mathbf 1_{(E_\varepsilon^{n_\varepsilon})^c})\leq \varepsilon$ when $n$ goes to infinity. Then,
\[
\limsup_n\frac{1}{t}\int_0^t\Delta_n(X_{ns}^\ori)\,\deriv s\leq\varepsilon,
\]
i.e.
\[
\lim_{n\to\infty}\frac{1}{t}\int_0^t\Delta_n(X_{ns}^\ori)\,\deriv s=\lim_{n\to\infty}\frac{1}{t}\int_0^t\big(f_n(X_{ns}^\ori)-f(X_{ns}^\ori)\big)\,\deriv s=0.
\]
Then,
\[
\frac{1}{t}\int_0^tf_n(X_{ns}^\ori)\,\deriv s-\mu_{x_0}(f)=\frac{1}{t}\int_0^t\big(f_n(X_{ns}^\ori)-f(X_{ns}^\ori)\big)\,\deriv s+\frac{1}{t}\int_0^tf(X_{ns}^\ori)\,\deriv s-\mu_{x_0}(f)
\]
and each term goes to zero when $n$ goes to infinity which concludes the proof.
\end{proof}
We can now move on to the proof of Theorem~\ref{thm:cv-X-jump_times}.
\begin{proof}[Proof of Theorem~\ref{thm:cv-X-jump_times}]
We make the proof by induction on $N\geq1$. Let $x\in E$, $T_1,\ldots,T_N>0$ positive real numbers and $f:E^{N}\longrightarrow\mathbb{R}$ a $\mathcal E^{\otimes N}$-measurable bounded functions.  Even if it means replacing $f$ with the difference between its positive and negative parts we will assume that $f$ is a positive function. For $N=1$, the result is given by the proposition \ref{prop:convergence1}
\begin{align*}
\lim_{n\to\infty}\espn_{x}\big[f(\mathbf{X}_{\tau_1^n}^n)\,\mathbf{1}_{\{\tau_1^n\leq T\}}\big]&=\Big(\int_0^T\mu_{\phi(x)}(b)\,G'(t\mu_{\phi(x)}(b))\,\deriv t\Big)\times\Big(\int_E\frac{\mu_{\phi(x)}(\deriv y)}{\mu_{\phi(x)}(b)}\,b(y)\,\pi(y,f)\Big)\\
&=\esp_x\big[f(\mathbf Y_1)\,\mathbf 1_{\{\tau_1\leq T\}}\big].
\end{align*}
Now, for $N+1$, by the Markov property at time $\tau_1^n$ we have
\[
\espn_{x}\big[f(\mathbf{X}_{\tau_1^n}^n,\ldots,\mathbf{X}_{\tau_{N+1}^n}^n)\,\mathbf{1}_{\{\tau_1^n\leq T_1\}}\ldots\mathbf{1}_{\{\tau_{N+1}^n-\tau_{N}^n\leq T_{N+1}\}}\big]=\espn_x\big[\mathbf 1_{\{\tau_1^n\leq T_1\}}\,\psi^n(\mathbf X_{\tau_1^n}^n)\big]
\]
where 
\[
\psi^n(y):=\espn_y\big[f(y,\mathbf X_{\tau_1^n}^n,\ldots,\mathbf X_{\tau_N^n}^n)\,\mathbf 1_{\{\tau_1^n\leq T_2\}}\ldots\mathbf 1_{\{\tau_N^n-\tau_{N-1}^n\leq T_{N+1}\}}\big]
\]
for all $y\in E$ and $n\geq1$. Then, by the induction property applied to the function 
\[
(x_1,\ldots,x_N)\in E^N\mapsto f(y,x_1,\ldots,x_N)
\]
and to the times $T_2, \ldots, T_{N+1}$, we get, for all $y\in E$,
\[
\psi^n(y)\underset{n\to\infty}{\longrightarrow}\psi(y):=\mathbf E_y\big[f(y,\mathbf Y_1,\ldots,\mathbf Y_{N+1})\,\mathbf 1_{\{\tau_1\leq T_2\}}\ldots\mathbf 1_{\{\tau_{N+1}-\tau_N\leq T_{N+1}\}}\big].
\]
Using the definition of the process and the law of $\tau_1^n$ we have
\begin{align*}
\espn_x\big[\mathbf 1_{\{\tau_1^n\leq T_1\}}\,\psi^n(\mathbf X_{\tau_1^n}^n)\big]&=\mathbf E_x^\ori\Big[\int_0^{T_1}\deriv t_1\,G'\big(I_{t_1}^n(b)\big)\,b(X_{nt_1}^\ori)\int_E\pi(X_{nt_1}^\ori,\deriv y_1)\,\psi^n(y_1)\Big]\\
&=\mathbf E_x^\ori\Big[\int_0^{T_1}\deriv t_1\,G'\big(I_{t_1}^n(b)\big)\,\varphi^n(X_{nt_1}^\ori)\Big],
\end{align*}
where we set
\[
\varphi^n(y):=b(y)\int_E\pi(y,\deriv y_1)\,\psi^n(y_1),\quad\forall y\in E.
\]
\textit{Step 1.}
From now we will assume that $G$ is both smooth and compactly supported i.e. $G\in C_c^\infty(\mathbb R)$. By adding and subtracting, the previous term equals
\[
\mathbf E_x^\ori\Big[\int_0^{T_1}\deriv t_1\,\big(G'(I_{t_1}^n(b))-G'(t_1\mu_x(b))\big)\varphi^n(X_{nt_1}^\ori)\Big]+\mathbf E_x^\ori\Big[\int_0^{T_1}\deriv t_1\,G'(t_1\mu_x(b))\,\varphi^n(X_{nt_1}^\ori)\Big].
\]
Since 
\[
\lvert\psi^n(y)\rvert\leq\|f\|_\infty,\quad\forall n\geq1, y\in E,
\]
the first term above can be dominated by
\[
\|f\|_\infty\,\mathbf E_x^\ori\Big[\int_0^{T_1}\big\lvert G'(I_{t_1}^n(b))-G'(t_1\mu_x(b))\big\rvert\,b(X_{nt_1}^\ori)\,\deriv t_1\Big],
\]
which goes to zero when $n$ goes to zero using the same argument as in the proof of the Proposition \ref{prop:convergence1}. By integrating by parts, we have
\begin{align*}
\mathbf E_x^\ori\Big[\int_0^{T_1}\deriv t_1\,G'(t_1\mu_x(b))\,\varphi^n(X_{nt_1}^\ori)\Big]&=\mathbf E_x^\ori\Big[G'(T_1\mu_x(b))\int_0^{T_1}\varphi^n(X_{ns}^\ori)\,\deriv s\Big]\\
&-\mathbf E_x^\ori\Big[\int_0^{T_1}\deriv t_1\,\mu_x(b)\,G''(t_1\mu_x(b))\int_0^{t_1}\varphi^n(X_{ns}^\ori)\,\deriv s\Big].
\end{align*}
Using the induction hypothesis and the dominated convergence theorem, we have
\[
\varphi^n(y)\underset{n\to\infty}{\longrightarrow}\varphi(y):=b(y)\int_E\pi(y,\deriv y_1)\,\psi(y_1)
\]
for all $y\in E$ and 
\[
\varphi^n\leq \|f\|_\infty\,b\in L^1(\mu).
\]
So, by the previous Lemma \ref{lem:ergon}, we have, for all $t>0$
\begin{align*}
\mathbf E_x^\ori\Big[\int_0^t\varphi^n(X_{ns}^\ori)\,\deriv s\Big]\underset{n\to\infty}{\longrightarrow}t\mu_x(\varphi).
\end{align*}
Moreover, since $f$ is positive, $\varphi^n$ is also positive for all $n\geq1$, so the function 
\[
t\mapsto\mathbf E_x^\ori\Big[\int_0^t\varphi^n(X_{ns}^\ori)\,\deriv s\Big]
\]
is non-decreasing. Then, by the Dini's Theorem the previous convergence is uniform. Reminding that $G\in C_c^\infty(\mathbb R)$ we finally 
\begin{align*}
\mathbf E_x^\ori\Big[\int_0^{T_1}\deriv t_1\,G'(t_1\mu_x(b))\,\varphi^n(X_{nt_1}^\ori)\Big]&\underset{n\to\infty}{\longrightarrow}G'(T_1\mu_x(b))\,T_1\mu_x(\varphi)\\
&-\int_0^{T_1}\mu_x(b)\,G''(t_1\mu_x(b))\,t_1\mu_x(\varphi)\,\deriv t_1.
\end{align*}
Again by integrating by parts, the previous term is equal to
\[
\int_0^{T_1}\deriv t_1G'(t_1\mu_x(b))\,\mu_x(\varphi),
\]
which is equal to
\begin{multline*}
\int_0^{T_1}\deriv t_1\,\mu_{\phi(x)}(b)\,G'(t_1\mu_{\phi(x)}(b))\,\int_E\frac{\mu_{\phi(x)}(\mathrm dy)}{\mu_x(b)}\,b(y)\int_E\pi(y,\mathrm dy_1)\\
\times\mathbf E_{y_1}\big[f(y_1,\mathbf Y_1,\ldots,\mathbf Y_{N+1})\,\mathbf 1_{\{\tau_1\leq T_2\}}\ldots\mathbf 1_{\{\tau_{N+1}\leq T_{N+1}\}}\big].
\end{multline*}
By the strong Markov property of $\mathbf Y$ at time $\tau_1$ the previous term is equal to 
\[
\mathbf E_x\big[f(\mathbf Y_1,\ldots,\mathbf Y_{N+1})\,\mathbf 1_{\{\tau_1\leq T_1\}}\ldots\mathbf 1_{\{\tau_{N+1}\leq T_{N+1}\}}\big]
\]
which is exactly the induction hypothesis at rank $N+1$.

\textit{Step 2.} Now if $G'\in L^1(\mathbb R_+)$ ($G'$ is a probability density on $\mathbb R_+$) is not both smooth and compacted supported, by density of $C_c^\infty(\mathbb R_+)$ in $L^1(\mathbb R_+)$, there exists $G'_\varepsilon$ in $C_c^\infty(\mathbb R_+)$ such that
\[
    \|G'-G'_\varepsilon\|_{L^1}\leq\varepsilon/(3\,\|f\|_\infty).
\]
We recall that
\[
\espn_{x}\big[f(\mathbf{X}_{\tau_1^n}^n,\ldots,\mathbf{X}_{\tau_{N+1}^n}^n)\,\mathbf{1}_{\{\tau_1^n\leq T_1\}}\ldots\mathbf{1}_{\{\tau_{N+1}^n-\tau_{N}^n\leq T_{N+1}\}}\big]
=\mathbf E_x^\ori\Big[\int_0^{T_1}\deriv t_1\,G'(I_{t_1}^n(b))\,\varphi^n(X_{nt_1}^\ori)\Big]
\]
which is equal to
\[
\mathbf E_x^\ori\Big[\int_0^{T_1}\deriv t_1\,\big(G'(I_{t_1}^n(b))-G_\varepsilon'(I_{t_1}^n(b)))\,\varphi^n(X_{nt_1}^\ori)\Big]+\mathbf E_x^\ori\Big[\int_0^{T_1}\deriv t_1\,G_\varepsilon'\big(I_{t_1}^n(b)\big)\,\varphi^n(X_{nt_1}^\ori)\Big].
\]
By substitution, the first term above is equal to
\[
\mathbf E_x^\ori\Big[\int_0^{I_{T_1}^n(b)}\deriv t_1\,\big(G'(t_1)-G_\varepsilon'(t_1)\big)\,\pi(X_{nI_{t_1}^{-n}(b)}^\ori,\psi^n)\Big],
\]
which is bounded above by
\[
\|f\|_\infty\,\mathbf E_x^\ori\Big[\int_0^{I_{T_1}^n(b)}\big\lvert G'(t_1)-G_\varepsilon'(t_1)\big\rvert\,\deriv t_1\Big]\leq\|f\|_\infty\,\|G'-G_\varepsilon'\|_{L^1}\underset{n\to\infty}{\longrightarrow}0.
\]
For the second term, by addition and subtraction, we have
\begin{align*}
\mathbf E_x^\ori\Big[\int_0^{T_1}\deriv t_1\,G_\varepsilon'(I_{t_1}^n(b))\,\varphi^n(X_{nt_1}^\ori)\Big]&=\mathbf E_x^\ori\Big[\int_0^{T_1}\deriv t_1\,\big(G_\varepsilon'(I_{t_1}^n(b))-G_\varepsilon'(t_1\mu_x(b))\,\varphi^n(X_{nt_1}^\ori)\Big]\\
&+\mathbf E_x^\ori\Big[\int_0^{T_1}\deriv t_1\,G_\varepsilon'(t_1\mu_x(b))\,\varphi^n(X_{nt_1}^\ori)\Big].
\end{align*}
As in the first step, the first term above goes to zero when $n$ goes to infinity and the second one goes to
\[
\mu_x(\varphi)\int_0^{T_1}G_\varepsilon'\big(t_1\mu_x(b)\big)\,\deriv t_1
\]
when $n$ goes to infinity. Again, by the strong Markov property of $\mathbf Y$ at time $\tau_1$,
\[
    \espn_{x}\big[f(\mathbf{Y}_1,\ldots,\mathbf{Y}_{N+1})\,\mathbf{1}_{\{\tau_1\leq T_1\}}\ldots\mathbf{1}_{\{\tau_{N+1}-\tau_{N}\leq T_{N+1}\}}\big]=\mu_x(\varphi)\int_0^{T_1}G'\big(t_1\mu_x(b)\big)\,\deriv t_1
\]
Then, we deduce that for all $\varepsilon>0$, there exists $n_\varepsilon\geq0$ such that for all $n\geq n_\varepsilon$
\begin{align*}
\Big\lvert\espn_{x}\big[f(\mathbf{X}_{\tau_1^n}^n,\ldots,\mathbf{X}_{\tau_{N+1}^n}^n)\,\mathbf{1}_{\{\tau_1^n\leq T_1\}}\ldots&\mathbf{1}_{\{\tau_{N+1}^n-\tau_{N}^n\leq T_{N+1}\}}\big]-\mu_x(\varphi)\int_0^{T_1}G'(t_1\mu_x(b))\,\deriv t_1\Big\rvert\\
&\leq\bigg\lvert\mathbf E_x^\ori\Big[\int_0^{T_1}\deriv t_1\,\big(G'(I_{t_1}^n(b))-G_\varepsilon'(I_{t_1}^n(b))\big)\,\varphi^n(X_{nt_1}^\ori)\Big]\bigg\rvert\\
&+\Big\lvert\mathbf E_x^\ori\Big[\int_0^{T_1}\deriv t_1\,\big(G_\varepsilon'(I_{t_1}^n(b))-G_\varepsilon'(t_1\mu_x(b))\,\varphi^n(X_{nt_1}^\ori)\big]\Big\rvert\\
&+\Big\lvert\mu_x(\varphi)\int_0^{T_1}\big(G_\varepsilon'(t_1\mu_x(b))-G'(t_1\mu_x(b))\big)\,\deriv t_1\Big\rvert\\
&\leq\varepsilon,
\end{align*}
which conclude the proof of the proposition.
\end{proof}

\begin{proof}[Proof of Corollary~\ref{cor:cv-I-jump_times}] 
Without risk of confusion, we denote, by $\phi$ the function defined on $(E^N,\mathcal E^N)$ taking values in $(I^N,\mathcal I^N)$, for all $N\geq1$, by
\[
    \phi(x_1,\ldots,x_N)=(i_1,\ldots,i_N)\quad\textup{if and only if}\quad (x_1,\ldots,x_N)\in E_{i_1}\times\ldots\times E_{i_N},
\]
with $\mathcal{E}^N=\otimes_{j=1}^N\mathcal E_j$ and $\mathcal{I}^N=\otimes_{j=1}^N\mathcal I_j$ where $\mathcal{E}_j=\mathcal{E}$ and $\mathcal{I}_j=\mathcal{I}$  for all $j=1,\ldots,N$.

Let $n,N\geq1$  positive integers, $T_N>\ldots>T_1>0$, strictly positive real numbers, a bounded measurable function $f:I^N\longrightarrow\mathbb R$, $i\in I$ and $x_i\in E_i$. We observe that
\begin{multline*}
\mathbf E_{x_i}^n\big[f\big(\mathbf{I}_{\tau_1^n}^n,\ldots,\mathbf{I}_{\tau_N^n}^n\big)\,\mathbf{1}_{\{\tau_1\leq T_1\}}\ldots\mathbf{1}_{\{\tau_{N}^n-\tau_{N-1}^n\leq T_N\}}\big]\\
=\mathbf E_{x_i}^n\big[f\circ\phi\big(\mathbf{X}_{\tau_1^n}^n,\ldots,\mathbf{X}_{\tau_N^n}^n\big)\,\mathbf{1}_{\{\tau_1^n\leq T_1\}}\ldots\mathbf{1}_{\{\tau_{N}^n-\tau_{N-1}^n\leq T_N\}}\big].
\end{multline*}
By definition of $\phi:E^N\longrightarrow I^N$ and that the function $f:I^N\to\mathbb R$ is bounded, the function $f\circ\phi:E^N\longrightarrow\mathbb R$ is bounded, so using the Theorem~\ref{thm:cv-X-jump_times} we get
\begin{multline*}
\lim_{n\to\infty}\mathbf E_{x_i}^n\big[f\big(\mathbf{I}_{\tau_1^n}^n,\ldots,\mathbf{I}_{\tau_N^n}^n\big)\,\mathbf{1}_{\{\tau_1\leq T_1\}}\ldots\mathbf{1}_{\{\tau_{N}^n-\tau_{N-1}^n\leq T_N\}}\big]\\
=\mathbf E_{x_i}\big[f\circ\phi(\mathbf Y_1,\ldots,\mathbf Y_N)\,\mathbf 1_{\{\tau_1\leq T_1\}}\ldots\mathbf 1_{\{\tau_N-\tau_{N-1}\leq T_N\}}\big].
\end{multline*}
Reminding the notation $\mu_{x_i}=\mu_i=\mu_{\phi(x_i)}$ if and only if $x_i\in E_i$ for all $i\in I$, we get the previous term is none other than quantity 
\[
    \mathbf{E}_{i}\big[f\big(\mathbf I_{\tau_1},\ldots,\mathbf I_{\tau_N}\big)\,\mathbf{1}_{\{\tau_1\leq t_1\}}\ldots\mathbf{1}_{\{\tau_{N}-\tau_{N-1}\leq t_N\}}\big]
\]
and conclude the proof of the theorem.
\end{proof}

\appendix

\section{Technical lemmas}
\label{sec:App-technical}
\begin{lemma}
\label{lem:mesE}
A function $f$ on $(E,\mathcal{E})$ is $\mathcal{E}$-measurable if and only if for all $i\in I$ its restriction $f_i$ to $E_i$ is $\mathcal{E}_i$-measurable. 
\end{lemma}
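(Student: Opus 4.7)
The plan is to prove both implications by unwinding the definition of $\mathcal{E}$, the key identity being $f_i^{-1}(B) = f^{-1}(B) \cap E_i$ for any subset $B$ of the codomain. I would fix a Borel-measurable codomain (say $(\mathbb{R},\mathcal{B}(\mathbb{R}))$, or more generally any target measurable space) and check preimages of sets in a generating family.

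For the forward direction, I would assume $f$ is $\mathcal{E}$-measurable and pick any measurable $B$ in the codomain. Then $f^{-1}(B) \in \mathcal{E}$, so by definition of $\mathcal{E}$ we have $f^{-1}(B) \cap E_i \in \mathcal{E}_i$ for every $i \in I$. Since the restriction $f_i$ satisfies $f_i^{-1}(B) = f^{-1}(B) \cap E_i$, this gives $f_i^{-1}(B) \in \mathcal{E}_i$, proving $f_i$ is $\mathcal{E}_i$-measurable.

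For the reverse direction, assume each $f_i$ is $\mathcal{E}_i$-measurable and let $B$ be measurable in the codomain. Using the disjoint-union decomposition $E = \bigsqcup_{i\in I} E_i$, I would write
\[
f^{-1}(B) = \bigsqcup_{i \in I} f_i^{-1}(B).
\]
To show $f^{-1}(B) \in \mathcal{E}$ it suffices, by definition, to verify that $f^{-1}(B) \cap E_j \in \mathcal{E}_j$ for each $j \in I$. But by disjointness of the $E_i$'s, the intersection collapses to $f_j^{-1}(B)$, which lies in $\mathcal{E}_j$ by hypothesis. Hence $f^{-1}(B) \in \mathcal{E}$ and $f$ is $\mathcal{E}$-measurable.

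There is no real obstacle here: the lemma is essentially a tautology given the definition of $\mathcal{E}$, and the only care needed is to keep track of the disjointness of the $E_i$ so that intersecting $f^{-1}(B)$ with $E_j$ returns exactly $f_j^{-1}(B)$ and not a contribution from other indices. As a byproduct, applied to $f = \phi$ with codomain $(I,\mathcal{I})$, the identity $\phi_i^{-1}(\{i\}) = E_i$ and $\phi_i^{-1}(A) \in \{\emptyset, E_i\}$ for $A \in \mathcal{I}$ shows that each restriction is $\mathcal{E}_i$-measurable, yielding the measurability of $\phi$ claimed in the main text.
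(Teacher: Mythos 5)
Your proof is correct and follows essentially the same route as the paper's: both directions rest on the identity $f_i^{-1}(B)=f^{-1}(B)\cap E_i$ together with the definition of $\mathcal{E}$. The added remark on $\phi$ is a correct bonus but not part of the lemma's proof in the paper.
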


\begin{proof}[Proof of Lemma~\ref{lem:mesE}]
Indeed, letting $f:E\to\mathbb{R}$ a measurable function on each $E_i$ with $i\in I$, we show that $f$ is $\mathcal{E}$-measurable. Let $B\in\mathcal{B}([0,\infty))
$, we have
\[
f^{-1}(B)=\{x\in E:f(x)\in B\}=\{x\in\cup_{i\in I}E_i:f(x)\in B\}=\cup_{i\in I}\{x\in E_i:f(x)\in B\},
\]
then for all $i\in I$, $
f^{-1}(B)\cap E_i=\{x\in E_i:f(x)\in B\}$, so $f^{-1}(B)\cap E_i\in\mathcal{E}_i$ because $f$ is $\mathcal{E}_i$-measurable for all $i\in I$. Then, $f$ is $\mathcal{E}$-measurable. Now, let $f:(E,\mathcal{E})\to\mathbb{R}$ a measurable function on $E$. For all $i\in I$, let $f_i:(E_i,\mathcal{E}_i)\to\mathbb{R}$ the restriction of $f$ to $E_i$. We show that $f_i$ is also $\mathcal{E}$-measurable. Let $B\in\mathcal{B}_{[0,\infty)}
$
\[
f_i^{-1}(B)=\{x\in E_i:f(x)\in B\}=\{x\in E:f(x)\in B\}\cap E_i.
\]
As, $f$ is $\mathcal{E}$-measurable, $\{x\in E:f(x)\in B\}\in\mathcal{E}$, then by definition of $\mathcal{E}$, $\{x\in E:f(x)\in B\}\in\mathcal{E}\cap E_i\in\mathcal{E}_i$ for all $i\in I$. Thus $f_i$ is $\mathcal{E}_i$-measurable for all $i\in I$.
\end{proof}

\subsection{Tensorisation of the total variation convergence}

\begin{lemma}
Let $(\mu_n)_{n\geq1}$ and $(\nu_n)_{n\geq1}$ two sequences of probability measures. Then, for $n\geq2$ we have
\[
    \|\otimes_{k=1}^n\mu_k-\otimes_{k=1}^n\nu_k\|_{\mathrm{TV}}\leq\sum_{k=1}^n\|\mu_k-\nu_k\|_{\mathrm{TV}}.
\]

\end{lemma}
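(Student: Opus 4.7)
The plan is to proceed by induction on $n \geq 2$, with the base case $n=2$ essentially capturing the main algebraic trick that will be iterated. The key identity is the telescoping decomposition
\[
\otimes_{k=1}^{n+1}\mu_k - \otimes_{k=1}^{n+1}\nu_k = \big(\otimes_{k=1}^{n}\mu_k\big)\otimes(\mu_{n+1}-\nu_{n+1}) + \big(\otimes_{k=1}^{n}\mu_k - \otimes_{k=1}^{n}\nu_k\big)\otimes\nu_{n+1},
\]
obtained by adding and subtracting $(\otimes_{k=1}^{n}\mu_k)\otimes\nu_{n+1}$.

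The second step is to use the fact that tensoring a signed measure $\alpha$ with a probability measure $\beta$ does not increase its total variation norm: indeed, for any measurable $f$ with $|f|\leq 1$, Fubini gives $(\alpha\otimes\beta)(f)=\alpha(g)$ with $g(x)=\int f(x,y)\,\beta(dy)$ satisfying $|g|\leq 1$, so $\|\alpha\otimes\beta\|_{\mathrm{TV}}=\|\alpha\|_{\mathrm{TV}}$. Applying this to each of the two summands above (with $\beta=\otimes_{k=1}^n\mu_k$ in the first and $\beta=\nu_{n+1}$ in the second, both of which are probability measures) and using the triangle inequality for $\|\cdot\|_{\mathrm{TV}}$, we obtain
\[
\big\|\otimes_{k=1}^{n+1}\mu_k - \otimes_{k=1}^{n+1}\nu_k\big\|_{\mathrm{TV}} \leq \|\mu_{n+1}-\nu_{n+1}\|_{\mathrm{TV}} + \big\|\otimes_{k=1}^{n}\mu_k - \otimes_{k=1}^{n}\nu_k\big\|_{\mathrm{TV}},
\]
and the induction hypothesis closes the argument.

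There is no real obstacle here; the only subtlety is to make the manipulation of the (signed) measure $\otimes_{k=1}^n\mu_k - \otimes_{k=1}^n\nu_k$ rigorous, which is handled by the tensorization identity $\|\alpha\otimes\beta\|_{\mathrm{TV}}=\|\alpha\|_{\mathrm{TV}}$ for $\beta$ a probability measure. An entirely equivalent (and perhaps more transparent) alternative would be to use the coupling characterization of total variation: take, for each $k$, an optimal coupling $(X_k,Y_k)$ with $\mathbf P(X_k\neq Y_k)=\|\mu_k-\nu_k\|_{\mathrm{TV}}$, take these couplings independent over $k$, and bound $\mathbf P((X_1,\ldots,X_n)\neq(Y_1,\ldots,Y_n))$ by the union bound. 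Either route yields the stated inequality.
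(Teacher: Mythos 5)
Your proof is correct. Note, however, that the paper does not actually prove this lemma: its ``proof'' consists of a single sentence invoking the upper bound (1.4) of Kontorovich's paper on the tensorization of the variational distance, with $P_k=\mu_k$ and $Q_k=\nu_k$. Your argument is therefore a genuinely self-contained alternative. The telescoping decomposition
\[
\otimes_{k=1}^{n+1}\mu_k - \otimes_{k=1}^{n+1}\nu_k = \big(\otimes_{k=1}^{n}\mu_k\big)\otimes(\mu_{n+1}-\nu_{n+1}) + \big(\otimes_{k=1}^{n}\mu_k - \otimes_{k=1}^{n}\nu_k\big)\otimes\nu_{n+1}
\]
is exactly the right identity, and the contraction property $\|\alpha\otimes\beta\|_{\mathrm{TV}}\leq\|\alpha\|_{\mathrm{TV}}$ for $\beta$ a probability measure (via Fubini, as you argue --- only the inequality is needed, not the equality you state) closes the induction. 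Your alternative via independent optimal couplings and the union bound is equally valid and arguably the quickest route. What the paper's citation buys is brevity and a pointer to sharper two-sided bounds in the reference; what your argument buys is that the lemma becomes verifiable on the spot with no external dependency. One minor point of care: make sure the total variation convention you use in the tensorization identity matches the one in the coupling characterization $\mathbf P(X_k\neq Y_k)=\|\mu_k-\nu_k\|_{\mathrm{TV}}$; since the claimed inequality is homogeneous in the normalization, this does not affect its validity.
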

\begin{proof}
For the proof of this lemma we use the upper bound in (1.4) from \cite{Kontorovic} with $P_k=\mu_k$ et $Q_k=\nu_k$ for $k=1,\ldots,n$.
\end{proof}

\begin{lemma}
\label{lem:tensorisation-cv-VT}
Let $(X^n)_{n\geq1}$ a sequence of Markov processes on $E_n$ with semi-group $P^n$. We assume that for all $n\geq1$ and $x\in E_n$ there exists a probability measure $\mu_n$ such that
\[
\|p_t^n(x,\,\cdot\,)-\mu_n\|_{\mathrm{TV}}\underset{t\to\infty}{\longrightarrow}0.
\]
Let $Z=(X^1,\ldots,Y^n)_{t\geq0}$ a process on $E_1\times\ldots\times E_n$, with semi-group $(Q_t)_{t\geq0}$ such that 
\[Q_t(f_1\ldots f_n)(x_1,\ldots, x_n)=P_t^1f_1(x_1)\cdots P_t^nf_n(x_n)\]
for all $(x_1,\ldots,x_n)\in E_1\times \ldots\times E_n$ and for all functions  $f_i\in B_b(E_i)$ $(i=1,\ldots,n)$. Then for all $z=(x_1,\ldots,x_n)\in E_1\times\ldots\times E_n$
\begin{align*}
\|q_t(z,\,\cdot\,)-\mu_1\otimes\ldots\otimes\mu_n\|_{\mathrm{TV}}\underset{t\to\infty}{\longrightarrow}0.
\end{align*}
\end{lemma}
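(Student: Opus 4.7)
The plan is to reduce the statement to the preceding lemma (the subadditivity of total variation under tensorisation) by first establishing that, under the product–factorisation assumption on $(Q_t)_{t \geq 0}$, the one–time marginal $q_t(z,\cdot)$ is \emph{exactly} the product measure $p_t^1(x_1,\cdot) \otimes \cdots \otimes p_t^n(x_n,\cdot)$. Once this identification is in hand, the conclusion will be a one-line application of the preceding lemma combined with the individual total variation convergences.

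First I would show that for every $z = (x_1,\ldots,x_n) \in E_1 \times \cdots \times E_n$,
\[
q_t(z,\cdot) = p_t^1(x_1,\cdot) \otimes \cdots \otimes p_t^n(x_n,\cdot).
\]
By the hypothesis on $Q_t$, both measures assign the same mass to every product function $f_1 \cdots f_n$ with $f_i \in B_b(E_i)$, namely $\prod_i P_t^i f_i(x_i)$, which equals $\prod_i \int f_i \, dp_t^i(x_i,\cdot)$ by definition of the individual semigroups. In particular, they agree on indicator functions of measurable rectangles $A_1 \times \cdots \times A_n$. Since the class of such rectangles is stable under finite intersections and generates the product $\sigma$-algebra, a standard monotone class (Dynkin $\pi$-$\lambda$) argument yields the equality of the two probability measures on the whole product $\sigma$-algebra.

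Having identified $q_t(z,\cdot)$ as a product measure, the preceding lemma directly gives
\[
\|q_t(z,\cdot) - \mu_1 \otimes \cdots \otimes \mu_n\|_{\mathrm{TV}} \;\leq\; \sum_{k=1}^n \|p_t^k(x_k,\cdot) - \mu_k\|_{\mathrm{TV}},
\]
and each summand tends to $0$ as $t \to \infty$ by the convergence assumption on each $X^k$. This concludes the proof.

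The only genuine obstacle is the measure-theoretic identification in the first step: the factorisation assumption is stated only on product functions, whereas the total variation distance requires control over all measurable sets. This is handled by the monotone class argument sketched above; everything else is a direct application of the previous lemma and of the componentwise hypotheses.
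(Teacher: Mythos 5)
Your proposal is correct and follows essentially the same route as the paper: identify the time-$t$ law of the product process as the tensor product of the marginal laws (the paper checks this on product functions for a product initial law $\nu_1\otimes\cdots\otimes\nu_n$, you do it for Dirac initial conditions with an explicit $\pi$-$\lambda$ justification), then apply the preceding tensorisation inequality for the total variation distance and let each summand tend to zero. Your monotone class step is in fact slightly more careful than the paper's, which passes from product test functions to the total variation bound without comment.
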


\begin{proof}
Let $\nu=\nu_1\otimes\ldots\otimes\nu_n$ a probability measure on $E_1\ldots\times\ldots E_n$ and $f:E_1\times\ldots\times E_n\longrightarrow[0,\infty]$ such that $f=f_1\cdots f_n$ with $f_i:E_i\longrightarrow[0,\infty]$ ($i=1,\ldots, n)$. Then, for all $t\geq0$
\begin{align*}
\nu\,Q_tf&=\int_{E_1\times\ldots\times E_n}Q_tf(x)\,\nu(\mathrm d x)\\
&=\int_{E_1\times\ldots\times E_n}Q_tf(x)\,\nu_1\otimes\ldots\otimes\nu_n(\mathrm dx)\\
&=\int_{E_1\times\ldots\times E_n}P_t^1f_1(x_1)\ldots P_t^nf_n(x_n)\,\nu_1(\mathrm d x_1)\ldots\nu_n(\mathrm d x_n)\\
&=\nu_1\,P_t^1\otimes\ldots\otimes\nu_n\,P_t^n f
\end{align*}
so
\[
\nu\,Q_tf-\mu_1\otimes\ldots\otimes\mu_n(f)=\nu_1\,P_t^1\otimes\ldots\otimes\nu_n\,P_t^n(f)-\mu_1\otimes\ldots\otimes\mu_n(f).
\]
Then, 
\[
\|\nu\,Q_t-\mu_1\otimes\ldots\otimes\mu_n\|_{\mathrm{TV}}\leq\|\nu_1\,P_t^1-\mu_1\|_{\mathrm{TV}}+\ldots+\|\nu_n\,P_t^n-\mu_n\|_{\mathrm{TV}}
\]
which goes to zero, using the previous lemma.
\end{proof}

\section{Construction of the semi-Markov process $\mathbf X^1$}

\label{sec:appendixPO}

For the construction of the process $\mathbf X^1$, we take inspiration from the piecing-out construction in~\cite{MBP2}.

Let $(V,\mathcal F^\ori)$ be a measurable space endowed with a filtration $(\mathcal F^\ori_t)_{t\geq 0}$ and a family of probability measures $(\mathbf{P}_x^\ori)_{x\in E}$. We assume that we are given a progressively measurable  process 
\[
X^\ori=(V, \mathcal F^\ori,(\mathcal F^\ori_t)_{t\geq 0},(\mathbf{P}_x^\ori)_{x\in E},(X_t^\ori)_{t\geq0})
\]Consider the measurable probability  space $(\tilde\Omega,\tilde{\mathcal G},\tilde{\mathbf P}_x)$, where
$\tilde\Omega=V\times [0,1]\times E$, $\tilde{\mathcal G}=\mathcal F^\ori\otimes \mathcal B([0,1])\otimes \mathcal E$ and
\[
\tilde{\mathbf P}_x(A\times B\times C)=\int_{A\times B}\mathbf{P}^\ori_x(d v)\,\lambda(d\,u)\,\pi(X^\ori_{\zeta(v,u)}(v),C),
\]
with
\[
\zeta(v,u)=\inf\{t\geq 0:  G\Big(\int_0^t \,b(X_s^{\ori}(v))\,\deriv s\Big)> u\}.
\]

\begin{remark}
The random variable $\zeta$ has the same law as $\inf\{t\geq 0, \ \int_0^t b(X_s^{\ori})\,\mathrm ds \geq U'\}$, where $U'$ is a random variable with repartition function $G$. In the particular situation where $G(x)=1-e^{-x}$, we recover the classical case of a process with exponential jump rate $b$.
\end{remark}

\begin{remark}
The remaining of the paper can be easily adapted to the situation where $G$ depends on $\phi(X^\ori_0)$, at the expense of heavier notations. This would be relevant, e.g. for PDMPs. However, for the sake of readability, we keep $G$ independent of $\phi(X^\ori_0)$ in the rest of the paper.
\end{remark}

We set, for all $t\geq 0$, $\tilde{\mathcal G}_t=\mathcal F_t^\ori\otimes \mathcal B([0,1])\otimes \mathcal E$. We are now in position to define the progressively mesurable process
\[
X=(\tilde\Omega, \tilde{\mathcal G},(\tilde{\mathcal G}_t)_{t\geq 0},(\tilde{\mathbf{P}}_x)_{x\in E},(X_t)_{t\geq0})
\]
with values in $(E,\mathcal E)$ an such that
\[
X_t(v,u,y)=
\begin{cases}
X_t^{\ori}(v)\quad&\textup{si}\quad t<\zeta(v,u),\\
y\quad&\textup{sinon}.
\end{cases}
\]

We also consider the set $\Omega:=\prod_{j=1}^\infty \tilde\Omega_j$ (where $\tilde\Omega_j=\tilde\Omega,\,j=1,2,\dots$) endowed with the product $\sigma$-field $\mathcal{F}=\otimes_{j=1}^\infty\tilde{\mathcal{G}}_j$,
$(\textup{where }\:\tilde{\mathcal{G}}_j=\tilde{\mathcal{G}},\,j=1,2,\ldots)$. According to Ionescu-Tulcea Theorem (see Theorem~8.24, \cite{kallenberg2002foundations}), there exists a unique system $(\mathbf P_x)_{x\in E}$ of probability measures $(\Omega,\mathcal{F})$ such that, for all non-negative measurable function $F$ on $(\prod_{j=1}^n \tilde\Omega_j,\otimes_{j=1}^n\tilde{\mathcal{G}}_j)$, $(n=1,2,\ldots)$,
\begin{equation}
\mathbf E_x[F(\omega_1,\ldots,\omega_n)]=\int_{\Omega^n}\tilde{\mathbf P}_x(\deriv\omega_1)\times\ldots\times \tilde{\mathbf P}_{y_{n-1}}(\deriv\omega_n)\,F(\omega_1,\omega_2,\ldots,\omega_n),
\label{eq:mesureProd}
\end{equation}
with $\omega_j=(v_j,u_j,y_j)$.

We define the process $\mathbf X^1=(\mathbf{X}_t^1)_{t\geq0}$ on $(\Omega,\mathcal{F})$ with values in $E$ by piecing out as follows. For any $\omega=(\omega_1,\omega_2,\ldots)\in\Omega$, où $\omega_j=(v_j,u_j,y_j)$, 
we define  $\mathbf{X}_t^1(\omega)$ by
\[
\mathbf{X}_t^1(\omega)=
\begin{cases}
X_t(\omega_1),&\textup{if}\;0\leq t\leq\zeta(v_1,u_1),\\
X_{t-\zeta(v_1,u_1)}(\omega_2),&\textup{if}\;\zeta(v_1,u_1)<t\leq\zeta(v_1,u_1)+\zeta(v_2,u_2),\\
\vdots&\vdots\\
X_{t-(\zeta(v_1,u_1)+\zeta(v_2,u_2)+\ldots+\zeta(v_n,u_n))}(\omega_{n+1}),&\textup{if}\;\sum_{j=1}^{n}\zeta(v_j,u_j)<t\leq\sum_{j=1}^{n+1}\zeta(v_j,u_j),\\
\vdots&\vdots\\
\Delta&\textup{if}\; t\geq\overset{\infty}{\underset{j=1}{\sum}}\zeta(v_j,u_j).
\end{cases}
\]
where $\Delta \notin E$ is a cemetery point.
The life time of $\zeta(\omega)$ of $\mathbf{X}_t(\omega)$ is then defined by
\[
\zeta(\omega)=\sum_{j=1}^{\infty}\zeta(v_j,u_j).
\]

\begin{remark}
Under the probability law $\tilde{\mathbf P}$, we have $X_0(\omega_2) = y_1 = X_{\zeta((v_1, u_1)}(\omega_1)$ so that $\mathbf{X}^1$ is right continuous at time $\zeta(v_1, u_1)$ if $X^\ori$ is itself right continuous.
\end{remark}

In addition, we introduce the sequence of random times $(\tau_k(\omega))_{k\geq0}$
\[
\tau_0(\omega)=0,\,\tau(\omega)\equiv\tau_1(\omega)=\zeta(v_1,u_1),\,\ldots\,,\tau_k(\omega)=\sum_{j=1}^{k}\zeta(v_j,u_j).
\]

\begin{proposition}
\label{prop:piecingOut}
The process $\mathbf{X}^1$ previously defined is a process on the probability space $\Omega$, taking values in $E\cup\{\Delta\}$ absorbed in state $\Delta$ such that,
such that for all $t\geq 0$, $B\in\mathcal{F}$ and $A\in\mathcal{E}$, 
\begin{align*}
\mathbf P_x(\{\omega:v_1\in B,\;\tau_1(\omega)\leq t,\textup{ and }\mathbf{X}_{\tau_1}^1(\omega)\in A\})=\int_B\mathbf{P}^\ori_x(\deriv v)\,\int_0^1 \mathrm du\,\mathbf 1_{\{\zeta(v,u)\leq t\}}\,\pi(X^\ori_{\zeta(v,u)}(v),A),
\end{align*}
where we wrote $\omega=(\omega_1,\omega_2,\ldots)$ et $\omega_j=(v_j, u_j,y_j)$. 
\end{proposition}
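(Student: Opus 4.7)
The approach is to unwind the piecing-out construction and reduce the probability on $\Omega$ to an integral on a single factor $\tilde\Omega_1$ via the Ionescu-Tulcea formula~\eqref{eq:mesureProd}.

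The first step is to observe that the event whose probability we wish to compute depends only on $\omega_1=(v_1,u_1,y_1)$. By definition, $\tau_1(\omega)=\zeta(v_1,u_1)$, and since $t=\tau_1$ sits at the right end of the first ``piece'',
\[
\mathbf X_{\tau_1}^1(\omega)=X_{\zeta(v_1,u_1)}(\omega_1)=y_1,
\]
where the last equality uses the ``otherwise'' branch of the definition of $X_t(v,u,y)$ at $t=\zeta(v,u)$. Hence the event rewrites as $\{v_1\in B,\ \zeta(v_1,u_1)\le t,\ y_1\in A\}$, a cylinder depending only on $\omega_1$.

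The second step applies~\eqref{eq:mesureProd} with $n=1$ and $F=\mathbf 1_{\{v_1\in B\}}\mathbf 1_{\{\zeta(v_1,u_1)\le t\}}\mathbf 1_{\{y_1\in A\}}$, which reduces the left-hand side to $\tilde{\mathbf P}_x\bigl(\{v\in B,\ \zeta(v,u)\le t,\ y\in A\}\bigr)$. Since the $u$-slice depends on $v$, I would extend the defining formula of $\tilde{\mathbf P}_x$ from product sets to arbitrary measurable subsets of $V\times[0,1]\times E$ by a standard monotone class / Fubini argument, which yields
\[
\int_B\mathbf P^\ori_x(\deriv v)\int_0^1\deriv u\,\mathbf 1_{\{\zeta(v,u)\le t\}}\,\pi\bigl(X^\ori_{\zeta(v,u)}(v),A\bigr),
\]
namely the right-hand side of the proposition. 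The absorption claim is immediate from the piecing-out definition: once $t\ge \sum_{j\ge 1}\zeta(v_j,u_j)$, the prescribed value is $\Delta$, and no branch of the definition allows $\mathbf X^1$ to leave $\Delta$.

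The main (though minor) obstacle is a measurability check: one needs $\zeta:(V\times[0,1],\mathcal F^\ori\otimes\mathcal B([0,1]))\to\mathbb R_+$ to be measurable, and likewise the map $(v,u)\mapsto X^\ori_{\zeta(v,u)}(v)$. Both follow from the progressive measurability of $X^\ori$ combined with the fact that $(v,t)\mapsto G\bigl(\int_0^t b(X^\ori_s(v))\,\deriv s\bigr)$ is jointly measurable and non-decreasing in $t$, so the sets $\{(v,u):\zeta(v,u)\le t\}$ lie in $\mathcal F^\ori\otimes\mathcal B([0,1])$ for every $t\ge 0$; this is also what makes the integrand on the right-hand side a well-defined measurable function.
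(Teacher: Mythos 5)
Your proof is correct and follows essentially the same route as the paper, which likewise reduces the claim to a single application of the Ionescu--Tulcea identity~\eqref{eq:mesureProd} with $n=1$ and $F(v_1,u_1,y_1)=\mathbf 1_B(v_1)\,\mathbf 1_{\{\zeta(v_1,u_1)\leq t\}}\,\mathbf 1_A(y_1)$. The additional details you supply (the identification $\mathbf X^1_{\tau_1}=y_1$ via the boundary case of the definition of $X_t$, the monotone-class extension of $\tilde{\mathbf P}_x$ beyond product sets, and the measurability of $\zeta$) are exactly the points the paper leaves implicit, so this is a faithful, slightly more detailed version of the same argument.
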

\begin{proof}
It is sufficent to apply equation \eqref{eq:mesureProd} to $F: \Omega_1 \longrightarrow\mathbb{R}$ defined by
\[
F(v_1, u_1, y_1) = \mathbf 1_B(v_1)\,\mathbf 1_{\{\zeta(v_1, u_1)\leq t\}} \,\mathbf 1_A(y_1)
\]
\end{proof}
For all $k\geq 1$, we note by $\varphi_k$ the projection of $\Omega$ on $\prod_{j=1}^k\tilde\Omega_j$ ($\tilde\Omega_j=\tilde\Omega)$. Then, we defined
\begin{align*}
\mathcal{F}_{\tau_k}=\varphi_k^{-1}(\otimes_{j=1}^k\tilde{\mathcal{G}})
\end{align*}
The process defined above verifies the Markov property at jump times $(\tau_k)_{k\geq0}$. For all $k\geq0$, we now have to defined the shift operator  $\theta_{\tau_k}:\Omega\to\Omega$ as follow: for $\omega=(\omega_1,\omega_2,\omega_3,\ldots),$
\[
\theta_{\tau_k}\omega=
(\omega_{k+1},\omega_{k+2},\omega_{k+3},\ldots)
\]
\begin{proposition}
\label{prop:propMarkov}
\begin{enumerate}
\item For all $B\in\mathcal{F}$ and $A\in\mathcal{F}_{\tau_k}$,
\begin{equation}
\tilde{\mathbf P}_x(A,\,\theta_{\tau_k}\omega\in B)=\mathbf E_x\big[\mathbf{1}_A\,\mathbf P_{\mathbf X_{\tau_k}^1}(B)\big].
\tag{1.5.1}
\label{eq:propMarkov1}
\end{equation}
\label{lem:propMarkov1}
\item Let $g(\omega,t)$ a bounded $\mathcal{F}\otimes\mathcal{B}([0,\infty])$-measurable function on $\Omega\times[0,\infty]$. If $\tau(\omega)\geq0$ is $\mathcal{F}_{\tau_k}$-measurable and $A\in\mathcal{F}_{\tau_k}$,
\begin{equation}
\mathbf E_x[\mathbf{1}_A\,g(\theta_{\tau_k}\omega,\tau)]=\mathbf E_x\big[\mathbf{1}_A\,\mathbf E_{\mathbf X_{\tau_k}^1}[g(\cdot,s)]|_{s=\tau}\big].
\tag{1.5.2}
\label{eq:propMarkov2}
\end{equation}
\label{lem:propMarkov2}
\item Let $g(\omega,\omega')$ a bounded $\mathcal{F}_{\tau_k}\otimes\mathcal{F}$-measurable function on $\Omega\otimes\Omega$. Then, for all $A\in\mathcal{F}_{\tau_k}$,
\begin{equation}
\mathbf E_x[\mathbf{1}_A\,g(\tilde{\omega},\theta_{\tau_k}\omega)]=\mathbf E_x\big[\mathbf{1}_A\,\mathbf E_{\mathbf X_{\tau_k}^1}[g(u,\cdot)]|_{u=\omega}\big].
\tag{1.5.3}
\label{eq:propMarkov3}
\end{equation}
\label{lem:propMarkov3}
\end{enumerate}
\end{proposition}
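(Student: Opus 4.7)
The plan is to first establish~\eqref{eq:propMarkov1}, and then to derive~\eqref{eq:propMarkov2} and~\eqref{eq:propMarkov3} from it by successive monotone class extensions. The starting observation is that $\mathbf X^1_{\tau_k}(\omega) = y_k$ for $k \geq 1$: indeed, on the interval $(\tau_{k-1}, \tau_k]$ one has $\mathbf X^1_t(\omega) = X_{t-\tau_{k-1}(\omega)}(\omega_k)$, and by the definition of $X_t(v,u,y)$, which equals $y$ as soon as $t \geq \zeta(v,u)$, we get $\mathbf X^1_{\tau_k}(\omega) = X_{\zeta(v_k, u_k)}(\omega_k) = y_k$. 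Thus the post-$\tau_k$ trajectory should depend on the past only through $y_k$, which is exactly what the three statements express.

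For~\eqref{eq:propMarkov1} I would proceed by a Dynkin argument on $B$. Writing $A = \varphi_k^{-1}(A')$ with $A' \in \otimes_{j=1}^k \tilde{\mathcal G}$ and taking $B$ to be a finite-dimensional cylinder $B_1 \times \cdots \times B_N \times \prod_{j > N} \tilde\Omega_j$, the event $A \cap \{\theta_{\tau_k}\omega \in B\}$ becomes the cylinder $\{(\omega_1, \ldots, \omega_{k+N}) \in A' \times B_1 \times \cdots \times B_N\}$, and~\eqref{eq:mesureProd} applied with $n = k+N$ gives
\[
\mathbf P_x(A \cap \{\theta_{\tau_k}\omega \in B\}) = \int \tilde{\mathbf P}_x(\deriv\omega_1)\cdots \tilde{\mathbf P}_{y_{k-1}}(\deriv\omega_k)\,\mathbf 1_{A'} \int \tilde{\mathbf P}_{y_k}(\deriv\omega_{k+1})\cdots \tilde{\mathbf P}_{y_{k+N-1}}(\deriv\omega_{k+N})\,\mathbf 1_{B_1 \times \cdots \times B_N}.
\]
The inner integral is exactly $\mathbf P_{y_k}(B)$ by~\eqref{eq:mesureProd} applied to the cylinder $B$ with initial point $y_k$, and since $y_k = \mathbf X^1_{\tau_k}(\omega)$ the whole expression equals $\esp_x[\mathbf 1_A \mathbf P_{\mathbf X^1_{\tau_k}}(B)]$. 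The class of sets $B \in \mathcal F$ for which the identity holds is stable under monotone limits and contains the $\pi$-system of cylinders, so it coincides with $\mathcal F$ by the monotone class theorem.

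Then~\eqref{eq:propMarkov2} and~\eqref{eq:propMarkov3} follow by standard bootstrapping. The identity~\eqref{eq:propMarkov1} first extends by linearity and monotone convergence to
\[
\esp_x[H \cdot h(\theta_{\tau_k}\omega)] = \esp_x\big[H \cdot \esp_{\mathbf X^1_{\tau_k}}[h]\big]
\]
for every bounded $\mathcal F_{\tau_k}$-measurable $H$ and every bounded $\mathcal F$-measurable $h$. Specialising $H = \mathbf 1_A f(\tau)$ and $h = g(\cdot, s)$ with $s$ fixed yields~\eqref{eq:propMarkov2} first for simple functions $g(\omega', t) = \sum_i h_i(\omega') f_i(t)$ and then for general bounded measurable $g$ by a functional monotone class argument; analogously, starting from product functions $g(\omega, \omega') = H(\omega) h(\omega')$ and extending by monotone class on $\mathcal F_{\tau_k} \otimes \mathcal F$ yields~\eqref{eq:propMarkov3}. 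The main subtlety I expect is the measurability of $\omega \mapsto \mathbf P_{\mathbf X^1_{\tau_k}(\omega)}(B)$ needed to make sense of the right-hand side of~\eqref{eq:propMarkov1}: one must first check that $x \mapsto \mathbf P_x(B)$ is $\mathcal E$-measurable, which is again proven by monotone class starting from cylinders, where measurability reduces to the joint measurability in $x$ of the kernels $\tilde{\mathbf P}_x$ built from the data $X^\ori$, $b$, $G$ and $\pi$.
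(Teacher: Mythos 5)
Your proposal is correct and follows essentially the same route as the paper: a direct computation on finite-dimensional cylinders using the Ionescu--Tulcea product formula~\eqref{eq:mesureProd}, splitting the integral at the $k$-th coordinate and identifying the inner integral as $\mathbf P_{y_k}(B)$ with $y_k=\mathbf X^1_{\tau_k}(\omega)$, followed by a $\pi$-$\lambda$/monotone class extension and a bootstrap to parts 2 and 3 via product-form functions. The only cosmetic difference is that you run the Dynkin argument on $B$ alone (keeping $A=\varphi_k^{-1}(A')$ general from the start), whereas the paper runs a joint $\lambda$-system argument in the pair $(A,B)$; both are valid, and your explicit remarks on $\mathbf X^1_{\tau_k}(\omega)=y_k$ and on the measurability of $x\mapsto\mathbf P_x(B)$ are welcome additions.
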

\begin{proof}
Let $(A_j)_{1\leq j\leq n}\subset\tilde{\mathcal{G}}$ a sequence of  $\tilde{\mathcal{G}}$. We first show that \ref{lem:propMarkov1} for $A=\{\omega:\omega_1\in A_1,\ldots,\omega_k\in A_k\}$ and $B=\{\omega:\omega_1\in A_{k+1},\ldots,\omega_{n-k}\in A_n\}$. From the definition of $\mathbf P_x$

\begin{align*}
\mathbf P_x\big(\{\omega:&\omega_1\in A_1,\ldots,\omega_n\in A_n\}\big)=\int_{A_1}\ldots\int_{A_n}Q(x,\deriv\omega_1)\ldots Q(y_{n-1},\deriv\omega_n)\\
&=\int_{A_1}\ldots\int_{A_k}Q(x,\deriv\omega_1)\ldots Q(y_{k-1},\deriv\omega_k)\int_{A_{k+1}}\ldots\int_{A_n}Q(y_k,\deriv\omega_{k+1})\ldots Q(y_{n-1},\deriv\omega_n)\\
&=\int_{A_1}\ldots\int_{A_k}Q(x,\deriv\omega_1)\ldots Q(y_{k-1},\deriv\omega_k)\,\mathbf P_{y_k}\big(\{\omega':\omega_1'\in A_{k+1},\ldots,\omega_{n-k}'\in A_n\}\big)\\
&=\mathbf E_x\Big[\mathbf{1}_{A_1\times\ldots\times A_k}(\omega)\,\mathbf P_{\mathbf{X}_{\tau_k}^1}\big(\{\omega:\omega_1\in A_{k+1},\,\omega_2\in A_{k+2},\ldots,\omega_{n-k}\in A_n\}\big)\Big]
\end{align*}
with $\omega_j=(w_j,y_j)$ for all  $j=1,2,\ldots,n$. We set $\mathcal{M}$ the set's collection of  $\mathcal{F}_{\tau_k}\otimes\mathcal{F}$ defined by
\[
\mathcal{M}=\big\{(A,B)\in\mathcal{F}_{\tau_k}\otimes\mathcal{F}:\mathbf P_x(A,\theta_{\tau_k}\omega\in B)=\mathbf E_x[\mathbf{1}_A\,\mathbf P_{\mathbf X_{\tau_k}^1}(B)]\big\}.
\]
$\mathcal{M}$ is a $\lambda$-system. Indeed, let $(A_1,B_1)\in\mathcal{M}$ and $(A_2,B_2)\in\mathcal{M}$ such that $A_1\subset A_2$ et $B_1\subset B_2$ so
\begin{align*}
\mathbf P_x(A_2\setminus A_1,\tilde{\theta}_{\tau_k}\omega\in B_2\setminus B_1)&=\mathbf P_x(A_2,\theta_{\tau_k}\omega\in B_2\setminus B_1)-\mathbf P_x(A_1,\theta_{\tau_k}\omega\in B_2\setminus B_1)\\
&=\mathbf P_x(A_2,\theta_{\tau_k}\omega\in B_2)-\mathbf P_x(A_2,\theta_{\tau_k}\omega\in B_1)\\
&-\mathbf P_x(A_1,\theta_{\tau_k}\omega\in B_2)+\mathbf P_x(A_1,\theta_{\tau_k}\omega\in B_1)\\
&=\mathbf E_x[\mathbf{1}_{A_2}\,\mathbf P_{\mathbf X_{\tau_k}^1}(B_2)]-\mathbf E_x[\mathbf{1}_{A_2}\,\mathbf P_{\mathbf X_{\tau_k}^1}(B_1)]\\
&-\mathbf E_x[\mathbf{1}_{A_1}\,\mathbf P_{\mathbf X_{\tau_k}^1}(B_2)]+\mathbf E_x[\mathbf{1}_{A_1}\,\mathbf P_{\mathbf X_{\tau_k}^1}(B_1)]\\
&=\mathbf E_x[\mathbf{1}_{A_2}\,\mathbf P_{\mathbf X_{\tau_k}^1}(B_2\setminus B_1)]-\mathbf E_x[\mathbf{1}_{A_1}\,\mathbf P_{\mathbf X_{\tau_k}^1}(B_2\setminus B_1)]\\
&=\mathbf E_x[\mathbf{1}_{A_2\setminus A_1}\,\mathbf P_{\mathbf X_{\tau_k}^1}(B_2\setminus B_1)].
\end{align*}
Then $(A_2\setminus A_1,B_2\setminus B_1)\in\mathcal{M}$. Let $n\geq 0$ and $(A_n,B_n)\in\mathcal{M}$ such that $(A_n,B_n)\subset (A_{n+1},B_{n+1})$. We set $A=\underset{n\geq0}{\bigcup}A_n$ and $B=\underset{n\geq0}{\bigcup}B_n$. Let us show that $(A,B)\in\mathcal{M}$
\begin{align*}
\mathbf P_x(A,\theta_{\tau_k}\omega\in B)&=\lim_{n\to\infty}\mathbf P_x(A_n,\theta_{\tau_k}\omega\in B)\\
&=\lim_{n\to\infty}\lim_{m\to\infty}\mathbf P_x(A_n,\tilde{\theta}_{\tau_k}\omega\in B_m)\\
&=\lim_{n\to\infty}\lim_{m\to\infty}\mathbf E_x[\mathbf{1}_{A_n}\,\mathbf P_{\mathbf X_{\tau_k}^1}(B_m)]\\
&=\lim_{n\to\infty}\mathbf E_x[\mathbf{1}_{A_n}\,\mathbf P_{\mathbf X_{\tau_k}^1}(B)]\\
&=\mathbf E_x[\mathbf{1}_{A_n}\,\mathbf P_{\mathbf X_{\tau_k}^1}(B_m)]\\
&=\lim_{n\to\infty}\mathbf E_x[\mathbf{1}_A\,\mathbf P_{\mathbf X_{\tau_k}^1}(B)].
\end{align*}
We have shown that $\mathcal{M}$ is a $\lambda$-system. However by definition, $\mathcal{F}_{\tau_k}$ is the $\sigma$-algebra generated by the $\pi$-system
\[
\mathcal{C}_k=\{A_1\times A_2\times\ldots\times A_k|A_1\in\tilde{\mathcal{G}},A_2\in\tilde{\mathcal{G}},\ldots,A_k\in\tilde{\mathcal{G}}\}
\]
and $\mathcal{F}$ by the $\pi$-système
\[
\mathcal{C}=\{A_1\times A_2\times\ldots\times A_n|A_1\in\tilde{\mathcal{G}},A_2\in\tilde{\mathcal{G}},\ldots,A_n\in\tilde{\mathcal{G}}\,\textup{pour tout}\,n\geq1\}.
\]
Yet $\mathcal{C}_k\otimes\mathcal{C}\subset\mathcal{M}$ so by the $\pi$-$\lambda$-theorem $\mathcal{M}=\mathcal{F}_{\tau_k}\otimes\mathcal{F}$ as $\pi(\mathcal{C}_k\otimes\mathcal{C})=\mathcal{F}_{\tau_k}\otimes\mathcal{F}$. So we have shown (\ref{eq:propMarkov1}). By a $\pi$-$\lambda$-theorem argument (\ref{eq:propMarkov2}) follows from (\ref{eq:propMarkov1}). Same for \ref{lem:propMarkov3}, we use (\ref{eq:propMarkov1}) to verify (\ref{eq:propMarkov3}) assuming that $g(\omega,\omega')=g_1(\omega)g_2(\omega')$ où $g_1$ is a bounded $\mathcal{F}_{\tau_k}$-measurable function and $g_2$ is a bounded $\mathcal{F}$-measurable function. Also by $\pi$-$\lambda$-theorem argument we get (\ref{eq:propMarkov3}) for all bounded $\mathcal{F}_{\tau_k}\otimes\mathcal{F}$-measurable function $g$.
\end{proof}

\end{document}